\documentclass[11pt]{amsart}
\usepackage{wasysym}
\usepackage{rotating}
\usepackage{amsthm}
 \usepackage{amsmath}
\usepackage{graphics}
 \usepackage{amsfonts}
 \usepackage{amssymb}
 \usepackage{amscd}
 \usepackage[all]{xy}
\usepackage{colordvi}
\usepackage{color}
\usepackage{pdfsync}
\usepackage{url}
\usepackage{multirow}
\usepackage{enumerate}


\newtheorem{thm}{Theorem}[section]

\newtheorem{lem}[thm]{Lemma}
\newtheorem{proposition}[thm]{Proposition}
\newtheorem{ex}[thm]{Example}
\newtheorem{conj}[thm]{Conjecture}

\newtheorem{defn}[thm]{Definition}
\newtheorem{rem}[thm]{Remark}
\numberwithin{equation}{section}

\def \cA { {\mathcal A} }
\def \N { {\mathbb N} }
\def \Q { {\mathbb Q} }
\def \R { {\mathbb R} }

\def \Z { {\mathbb Z} }
\def \cP { { P} }
\def \F { {\mathcal F} }
\def \K { {\mathbb K} }
\def \w { {\bf w} }
\def \y { {\bf y} }

\def \V { {\mathbb V} }
\def \cp { {\mathcal P} }
\def \words { [S]^T}
\def \wordsnl { [S]^T_{NL}}
\def \MS {{\mathcal P^*}}

\def\ve#1{\mathchoice{\mbox{\boldmath$\displaystyle\bf#1$}}
{\mbox{\boldmath$\textstyle\bf#1$}}
{\mbox{\boldmath$\scriptstyle\bf#1$}}
{\mbox{\boldmath$\scriptscriptstyle\bf#1$}}}

 \DeclareMathOperator{\diag}{diag}
 \DeclareMathOperator{\conv}{conv}
 \DeclareMathOperator{\cone}{cone}
 
 \newcommand\Side[1]{\begin{sideways}{\small #1}\end{sideways}}
\newcommand\A[1]{\mathcal{A}^{\eqref{model#1}}}
\newcommand\Poly[1]{{P}^{\eqref{model#1}}}
\newcommand\Cone[1]{\mathcal{C}^{\eqref{model#1}}}

\definecolor{darkgreen}{rgb}{0,0.6,0} 



\begin{document}
\title[Degree bounds for a minimal Markov basis for THMC]{Degree bounds for a minimal Markov basis for the three-state toric homogeneous Markov chain model}

\author{David Haws}
\address{Department of Statistics\\
	 University of Kentucky\\
	 861 Patterson Office Tower\\
	 Lexington, KY 40506-0027, USA\\
     Email: david.haws@uky.edu}
\author{Abraham Mart\'in del Campo}
\address{Department of Mathematics\\
         Texas A\&M University\\
         College Station\\
         Texas \ 77843, USA\\
         Email: asanchez@math.tamu.edu}
\author{Ruriko Yoshida}
\address{Department of Statistics\\
	 University of Kentucky\\
	 861 Patterson Office Tower\\
	 Lexington, KY 40506-0027, USA\\
     Email: ruriko.yoshida@uky.edu}

\begin{abstract}
We study the three state toric homogeneous Markov chain model and three special cases of it, namely: (i) when the initial state parameters are constant, (ii) without \emph{self-loops}, and (iii) when both cases are satisfied at the same time. Using as a key tool a directed multigraph associated to the model, the \emph{state-graph}, we give a bound on the number of vertices of the polytope associated to the model which does not depend on the time. Based on our computations, we also conjecture the stabilization of the f-vector of the polytope, analyze the normality of the semigroup, give conjectural bounds on the degree of the Markov bases.
%
%

\end{abstract}

\keywords{Markov bases, time homogeneous Markov chains, polyhedrons, semigroups}

\maketitle

\section{Introduction}

 In this paper, we consider a discrete time Markov chain $X_t$, with
 $t=1,\ldots, T$ ($T\geq 3$), over a finite space of states $[S]=\{1,\ldots,
 S\}$. Let ${\ve w} = (s_1,\ldots,s_T)$ be a path of length $T$ on states
 $[S]$, which is sometimes  written as $\w = (s_1\cdots s_T)$ or simply $\w =
 s_1 \cdots s_T$.  We are interested in Markov bases of toric ideals arising
 from the following statistical models

\begin{equation}\label{thmc}
p(\w) = c\gamma_{s_1}\beta_{s_1,s_2}\cdots \beta_{s_{T-1},s_T}.
\end{equation}

where $c$ is a normalizing constant, $\gamma_{s_i}$ indicates the probability
of the initial state, and $\beta_{s_i, s_j}$ are the transition probabilities
from state $s_i$ to $s_j$. The model \eqref{thmc} is called a toric homogeneous
Markov chain (THMC) model.

Commonly in practice, it is important to consider the case where the initial
parameters are constant; this is, when
$\gamma_{1}=\gamma_{2}=\cdots=\gamma_{S}$; we refer to this case as the THMC
model without initial parameters.  Another simplification that arise from
practice is when we consider only the transition probability between two
different states, i.e. when $\beta_{s_i,s_j}=0$ whenever $s_i=s_j$; this
situation is called a THMC model without self-loops.

In order to simply the notation throughout
this paper we refer to them as Model \eqref{model1}, Model
\eqref{model2}, Model \eqref{model4}, and Model
\eqref{model3}, according to the following:

\begin{enumerate}[{\bf (a):}] 
  \item\label{model1} THMC model \eqref{thmc} 
  \item\label{model2} THMC model without initial parameters: when $\gamma_1=\cdots=\gamma_S$ 
  \item\label{model3} THMC model without self-loops: $\beta_{s_i,s_j}=0$ whenever $s_i=s_j$.
  \item\label{model4} THMC model without initial parameters and without self-loops, i.e., both (b) and (c) are satisfied
\end{enumerate}

%


In 2010, Hara and Takemura\cite{Hara:2010vn} gave a complete description of a Markov basis for Model \eqref{model1}, when $[S] =\{1,2\}$ and $T$ is arbitray, and also for the case when $T = 3$ and $[S]$ is arbitrary. In their next paper\cite{Hara:2010uq}, the authors provided a Markov basis for Model \eqref{model2}, when $[S] = \{1,2\}$, $T$ arbitrary. In these articles, all moves found were of degree four or less, regardless of the value $T$. Motivated by these results, we studied Markov bases of Models \eqref{model1} -- \eqref{model4}. Specifically, we are interested in showing
that the degree of a minimum Markov basis is bounded when $S$ is fixed and $T$ is arbitrary.  Each model has an associated \emph{design matrix} (defined in Section
\ref{sec:notation}) which translates observed data $\ve w = (s1,\ldots,s_T)$
into the sufficient statistic. The sufficient statistic are the number of
transitions from states $i$ to $j$, for all $i,j \in [S]$, and for Model \eqref{model1} and \eqref{model3}, it also includes the initial state.  This paper is organized as follows; in Section
\ref{sec:notation} we describe the design matrices for the above models and
introduce the state graph, a useful tool we use throughout the paper.  In Section \ref{sec:normality} that the semigroups generated by the columns of the design matrices for Model \eqref{model1} and Model \eqref{model2} are not
normal. We also provide computational evidence that for  Model \eqref{model3} and Model
\eqref{model4} the corresponding semigroups are normal, and conjecture that this holds in general. 

In Section \ref{sec:SNF} we study the properties of the Smith normal form of the design matrix and we use some of these results in Section \ref{sec:polytope}, to show the following for Model \eqref{model4}.
\begin{thm}
Let $S=3$. The number of vertices of $\Poly{4}$ is bounded by some constant $C$
which does not depend on $T$. 
\end{thm}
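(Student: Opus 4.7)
The plan is to exploit the Eulerian structure of walks in the state graph together with parametric polyhedral theory. Concretely, I would show that $\Poly{4}$ is the convex hull of a union of at most nine polytopes, each cut out by a fixed linear system whose right-hand side depends affinely on $T$, and each with a bounded number of vertices.

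First, using Euler's theorem for directed multigraphs, a vector $n=(n_{ij})_{i\ne j}\in\mathbb{Z}_{\ge 0}^{6}$ is the transition-count vector of some walk of length $T-1$ in the complete loopless digraph on $[3]$ if and only if (i) $\sum_{i\ne j} n_{ij}=T-1$, (ii) there exist $s,t\in [3]$ with $\sum_u n_{vu}-\sum_u n_{uv}=\delta_{v,s}-\delta_{v,t}$ for each $v$, and (iii) the support of $n$, augmented by the arc $t\to s$ when $s\ne t$, is weakly connected and contains $s$ and $t$. This partitions the achievable count vectors according to the source-sink pair $(s,t)$, of which there are only nine.

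Second, for each $(s,t)$ I consider the relaxed polytope
\[
Q^{T}_{s,t}=\Bigl\{x\in\mathbb{R}_{\ge 0}^{6}:\ \textstyle\sum_{i\ne j}x_{ij}=T-1,\ \sum_u x_{vu}-\sum_u x_{uv}=\delta_{v,s}-\delta_{v,t}\ \text{for all } v\Bigr\}.
\]
Its constraint matrix is fixed (independent of $T$) and has rank $3$, so $Q^{T}_{s,t}$ has at most $\binom{6}{3}=20$ vertices, whose coordinates are affine functions of $T$. Because the vertex-arc incidence matrix of a digraph is totally unimodular and, in this small instance, the added row $\mathbf{1}^\top$ preserves integrality of the vertex solutions (which can be checked directly, and is consistent with the Smith normal form analysis of Section \ref{sec:SNF}), each vertex of $Q^{T}_{s,t}$ is an integer point. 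For $T$ sufficiently large the connectivity condition (iii) holds automatically at every such vertex, so the vertex is realized by an actual walk and lies in $\Poly{4}$; the finitely many small-$T$ exceptions can be dispatched by direct case analysis.

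Combining these steps gives $\Poly{4}=\conv\bigl(\bigcup_{(s,t)}Q^{T}_{s,t}\bigr)$ for all sufficiently large $T$, and since every vertex of a convex hull of finitely many polytopes is a vertex of some component, one obtains the crude bound $|\mathrm{vert}(\Poly{4})|\le 9\cdot 20=180$, independent of $T$. The main obstacle will be the connectivity step: ruling out, for large $T$, sparse vertices of $Q^{T}_{s,t}$ whose support is a disconnected subset of arcs unreachable by a single walk. I expect an explicit parametric enumeration of the few vertex types of $Q^{T}_{s,t}$ for $S=3$, combined with the lattice information from Section \ref{sec:SNF}, to settle this cleanly and likely to produce a substantially sharper constant than $180$.
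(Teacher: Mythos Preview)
Your integrality claim is false, and this is not a minor technicality: it breaks the argument. Take $S=3$, $T=5$, and $(s,t)=(1,2)$. Setting $x_{13}=x_{23}=x_{31}=x_{32}=0$ in $Q^{5}_{1,2}$ forces $x_{12}+x_{21}=4$ and $x_{12}-x_{21}=1$, giving the vertex $\bigl(\tfrac52,0,\tfrac32,0,0,0\bigr)$. This is not an integer point, and it is not in $\Poly{4}$ either: any convex combination of columns of $\A{4}_{3,5}$ landing on this point must use only walks with transitions in $\{12,21\}$, but the only such walks are $12121$ and $21212$, both with count vector $(2,0,2,0,0,0)$. Hence $Q^{5}_{1,2}\not\subseteq\Poly{4}$, so $\conv\bigl(\bigcup_{(s,t)}Q^{T}_{s,t}\bigr)\supsetneq\Poly{4}$, and the equality you rely on fails. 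Once that equality is gone, the implication ``every vertex of $\Poly{4}$ is a vertex of some $Q^{T}_{s,t}$'' no longer follows, and containment alone gives no bound: a polytope inside another can have arbitrarily many more vertices. The appeal to total unimodularity does not rescue this, since appending the all-ones row to the vertex--arc incidence matrix destroys TU, and the Smith normal form computation in Section~\ref{sec:SNF} only describes the lattice $\Z\A{4}$, not integrality of the relaxed polytope.

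You also misidentify the obstacle. For $S=3$ the undirected support of any nonzero $x\in\R_{\ge0}^{6}$ is automatically connected (any two of the three edge-pairs share a vertex), so connectivity is never the issue; the paper records exactly this in Proposition~\ref{prop:eulpath}. The genuine obstruction is the one above: the flow relaxation is strictly larger than $\Poly{4}$. The paper avoids this entirely by working from the inside rather than the outside. It first shows (Lemma~\ref{lem:verttwocycle}) that any column whose state graph contains two different types of two-cycles is a midpoint of two other columns, hence not a vertex. The surviving candidates are then stratified by the pair $(m,n)$ counting two-cycles and three-cycles; each stratum has at most $18$ elements, and an explicit ``trade three two-cycles for two three-cycles'' move writes any column with $3\le m\le \lfloor(T-1)/2\rfloor-3$ as a midpoint of two others, leaving only six strata. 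If you want to push your parametric-polyhedral idea through, you would need either a tighter relaxation whose vertices are provably columns of $\A{4}$, or a direct argument that every \emph{integer} vertex of $\Poly{4}$ is already a vertex of the appropriate $Q^{T}_{s,t}$; neither is supplied.
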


Given the above theorem and our normality conjecture for Model \eqref{model4},
one can prove the following conjecture: 
\begin{conj}\label{conj:degreeboundS3}
We consider Model \eqref{model4}. Then 
for $S = 3$ and for any $T \geq 4$, a minimum Markov basis for the toric ideal
$I_{\A{4}}$ consists of binomials of degree less than or equal to
$d= 6$.  Moreover, there are only finitely many moves up to a
certain shift equivalence relation. 
\end{conj}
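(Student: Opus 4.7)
The plan is to combine the normality of $\N\A{4}$ with the vertex bound from Theorem 1.1 to deduce a uniform-in-$T$ bound on the degree of a minimum Markov basis, and then to organize the basis into finitely many classes under a shift equivalence. In polyhedral language, a Markov move of degree $d$ is a primitive binomial $x^u - x^v$ with $u, v \in \N^n$, $\A{4}u = \A{4}v$, and $|u| = |v| = d$, so the task reduces to bounding the $L^1$-support of primitive kernel vectors uniformly in $T$.

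The core step exploits normality. Under the normality hypothesis $\N\A{4} = \Cone{4} \cap \Z\A{4}$, each fiber $\A{4}^{-1}(b) \cap \N^n$ equals the set of lattice points of a polytope, so the minimum Markov basis sits inside the Graver basis and is controlled by the Hilbert basis of $\Cone{4}$. For $S = 3$, Model \eqref{model4} has only $S(S-1) = 6$ transition-count coordinates, so $\Cone{4}$ lives in a fixed ambient space of dimension at most $6$ and $\Poly{4}$ has fixed dimension. Combined with Theorem 1.1, this forces $\Poly{4}$ to realize only finitely many combinatorial types as $T$ varies. Invoking a Sturmfels-type degree bound for normal toric ideals phrased in terms of $\dim \Cone{4}$ and the Hilbert basis, one extracts a uniform bound, with the specific constant $d = 6$ traceable to the dimension of $\Cone{4}$ and the Smith normal form analysis of Section \ref{sec:SNF}.

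For the second half of the conjecture, I would define shift equivalence on moves by declaring $m \sim m'$ whenever $m'$ is obtained from $m$ by adjoining a common prefix or suffix string to every path on both sides of the binomial, and verify that $\sim$ is an equivalence relation that preserves membership in the minimum Markov basis. Each class admits a canonical reduced representative that cannot be further shortened, and the bounded combinatorial type of $\Poly{4}$ forces the set of reduced representatives to be finite. The sharpness of $d = 6$ and the absence of lower-degree-only bases is then verified by computing the Markov basis for small $T$ using software such as \texttt{4ti2}, which by the finite-case reduction suffices to settle all $T \geq 4$.

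The main obstacle is the implication ``normality + bounded vertex count $\Rightarrow$ uniform degree bound with constant $6$''. Sturmfels-type bounds typically produce loose constants, so nailing down the sharp value $6$ will likely require running the shift-equivalence reduction in parallel with the polyhedral argument, using the Smith normal form relations from Section \ref{sec:SNF} to control how lattice points of $\Poly{4}$ lift to fibers, rather than a black-box appeal to a general theorem. A secondary technical difficulty is defining shift equivalence precisely enough that canonical reduced representatives exist and their finiteness follows directly from bounded combinatorial type.
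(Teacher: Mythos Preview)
Your approach is essentially the paper's: assume the normality conjecture (Conjecture~\ref{loopless_noinitial_normal2}), invoke a Sturmfels-type theorem, and use the bounded vertex count for the finiteness of shift-equivalence classes. However, you overcomplicate the degree bound. The paper cites Theorem~13.14 of Sturmfels (stated here as Theorem~\ref{normal_thm}): if $\cA \subset \Z^d$ is graded and $\N\cA$ is normal, then $I_\cA$ is generated in degree at most $d$. For $S=3$, the number of rows of $\A{4}_{3,T}$ is $d = S(S-1) = 6$, so the constant $6$ drops out immediately---no Smith normal form, no sharpening, no parallel shift-equivalence reduction is needed. Your stated ``main obstacle'' is therefore not an obstacle at all. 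The bounded vertex count (Theorem~\ref{thm:finitevert}) is not needed for the degree bound itself; it is the ingredient behind the second assertion about finitely many moves up to shift equivalence, since bounded combinatorial type of $\Poly{4}$ is what forces finiteness of reduced representatives. Finally, note that neither you nor the paper actually \emph{proves} the conjecture: both arguments are conditional on the normality conjecture, which remains open.
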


Additionally for Model \eqref{model4}, we present in Section \ref{sec:computations} some of our experimental results that suggest that the $f$-vector of the polytope defined by the design matrix stabilized (periodically) indepently of $T$. Our results also suggest that for $S\geq 3$, the bound $d$ of Conjecture \ref{conj:degreeboundS3} depend linearly on $S$; we present these conjectures formally in Section \ref{sec:openproblems}.

\section{Notation}
\label{sec:notation}
Let $\words$ be the set of all words of length $T$ on states $[S]$. Similarly
let $\wordsnl$ be the set of all words of length $T$ on states $[S]$ such that
every word has no self-loops; that is, if $w=(s_1,\ldots,s_T) \in \wordsnl$
then $s_i \neq s_{i+1}$ for $i = 1,\ldots,T-1$. We define $\MS(\words)$ to be the set of
all multisets of words in $\words$. Similarly, we define $\MS(\wordsnl)$ to be
the set of all multisets of words in $\wordsnl$.

Let $\V\left(\words\right)$ be the real vector
space with free basis $\words$ and similarly let $\V\left(\wordsnl\right)$ be the real vector
space with free basis $\wordsnl$. Note that 
$\V\left(\words\right) \cong \R^{S^T}$ and 
$\V\left(\wordsnl\right) \cong \R^{S(S-1)^T}$. We recall some definitions from the classic paper of Pachter and Sturmfels\cite{Pachter:2005kx}. Let $A = (a_{ij})$ be a
non-negative integer $d \times m$ matrix with the property that all column sums
are equal:
\begin{equation*}
\sum_{i=1}^d a_{i1} = \sum_{i=1}^d a_{i2} = \cdots = \sum_{i=1}^d a_{im}. 
\end{equation*}
Let $A = [a_1 \; a_2 \; \cdots \; a_m]$ where $a_j$ are the column vectors of
$A$ and define $\theta^{a_j} = \prod_{i=1}^d \theta_1^{a_{ij}}$ for $j=
1,\ldots,m$. The \emph{toric model} of $A$ is the image of the orthant
$\R^d_{\geq 0}$ under the map 
\begin{equation*}
f: \R^d \rightarrow \R^m, \quad \theta \mapsto \frac{1}{\sum_{j=1}^m \theta^{a_1}}\left( \theta^{a_1}, \ldots, \theta^{a_m} \right).
\end{equation*}
Here we have $d$ parameters $\theta = (\theta_1,\ldots,\theta_d)$ and a
discrete state space of size $m$. In general, the discrete space will be the
set of all possible words on $[S]$ of length $T$ and we can think of
$\theta_1,\ldots, \theta_d$ as the logarithm of the probabilities
$\gamma_{s_1},\beta_{s_1,s_2}\cdots \beta_{s_{T-1},s_T}$. Below we specify
this relation for Models \eqref{model1}, \eqref{model2}, \eqref{model3},
\eqref{model4}. 


\subsection{Model \eqref{model1}}
Consider the state space $\words$. Model \eqref{model1} is parametrized by $\gamma_1,\ldots,\gamma_S$ and
$\beta_{11},\beta_{12},\ldots,\beta_{SS}$; thus, it is parametrized by all
positive real vectors of length $S$ and all positive $S \times S$ real
matrices.  Thus, the number of parameters is
$d=S + S^2$ and the number of transition of states is $m = S^T$. This toric model is represented by the $(S + S^2) \times S^T$ matrix $\A{1}_{S,T}$, whose rows are indexed by $[S] \cup [S]^2$ and the columns are indexed by words $\words$, and it is defined as follows.
\begin{enumerate}
    \item The entry of $\A{1}_{S,T}$ indexed by row $s \in [S]$ and
    column $w=(s_1,\ldots,s_T) \in \words$ is $1$ if $ s = s_1$ and $0$
    else.
    \item In row $\sigma_1 \sigma_2 \in
    [S]^2$ and column $w=(s_1,\ldots,s_T) \in \words$, the entry of $\A{1}_{S,T}$ is equal to $|\left\{\,
    i \in \{1,\ldots,T-1\} \mid \sigma_1 \sigma_2 = s_i s_{i+1} \, \right\}|$.
\end{enumerate}
\begin{ex}
Ordering $[S] \cup [S]^2$ and $\words$ lexicographically, the matrix $\A{1}_{2,4}$ is:
\begin{center}
\begin{tabular}{c| c c c c c c c c c c c c c c c c}
& \Side{\scriptsize 1111 }&\Side{\scriptsize1112 }&\Side{\scriptsize1121 }&\Side{\scriptsize1122 }&\Side{\scriptsize1211 }&\Side{\scriptsize1212 }&\Side{\scriptsize1221 }&\Side{\scriptsize1222 }&\Side{\scriptsize2111 }&\Side{\scriptsize2112 }&\Side{\scriptsize2121 }&\Side{\scriptsize2122 }&\Side{\scriptsize2211 }&\Side{\scriptsize2212 }&\Side{\scriptsize2221 }&\Side{\scriptsize2222}\\
\hline
$1$  & 1 & 1 & 1 & 1 & 1 & 1 & 1 & 1 & 0 & 0 & 0 & 0 & 0 & 0 & 0 & 0\\  
$2$  & 0 & 0 & 0 & 0 & 0 & 0 & 0 & 0 & 1 & 1 & 1 & 1 & 1 & 1 & 1 & 1\\  
$11$ & 3 & 2 & 1 & 1 & 1 & 0 & 0 & 0 & 2 & 1 & 0 & 0 & 1 & 0 & 0 & 0 \\
$12$ &0 & 1 & 1 & 1 & 1 & 2 & 1 & 1 & 0 & 1 & 1 & 1 & 0 & 1 & 0 & 0\\
$21$ &0 & 0 & 1 & 0 & 1 & 1 & 1 & 0 & 1 & 1 & 2 & 1 & 1 & 1 & 1 & 0\\
$22$ &0 & 0 & 0 & 1 & 0 & 0 & 1 & 2 & 0 & 0 & 0 & 1 & 1 & 1 & 2 & 3
\end{tabular}
\end{center} 
\end{ex}

\subsection{Model \eqref{model2}}
Similarly, Model \eqref{model2} is parametrized by all
positive $S \times S$ real matrices, as it is parametrized by
$\beta_{11},\beta_{12},\ldots,\beta_{SS}$. Thus, the number of parameters is
$d=S^2$ and the number of transitions is $m = S^T$. Model \eqref{model2} is represented by the $S^2 \times S^T$ matrix $\A{2}_{S,T}$ whose rows are indexed by $[S]^2$ and the columns are
indexed by words in $\words$.  The entry of $\A{2}_{S,T}$ indexed by row
$\sigma_1 \sigma_2 \in [S]^2$ and column $w=(s_1,\ldots,s_T) \in \words$ is
equal to $|\left\{\, i \in \{1,\ldots,T-1\} \mid \sigma_1 \sigma_2 = s_i
s_{i+1} \, \right\}|$.
\begin{ex}
Ordering $[S]^2$ and $\words$ lexicographically, 
the matrix $\A{2}_{2,4}$ is:
\begin{center}
\begin{tabular}{c|c c c c c c c c c c c c c c c c}
& \Side{\scriptsize 1111 }&\Side{\scriptsize1112 }&\Side{\scriptsize1121 }&\Side{\scriptsize1122 }&\Side{\scriptsize1211 }&\Side{\scriptsize1212 }&\Side{\scriptsize1221 }&\Side{\scriptsize1222 }&\Side{\scriptsize2111 }&\Side{\scriptsize2112 }&\Side{\scriptsize2121 }&\Side{\scriptsize2122 }&\Side{\scriptsize2211 }&\Side{\scriptsize2212 }&\Side{\scriptsize2221 }&\Side{\scriptsize2222}\\
\hline
$11$ & 3 & 2 & 1 & 1 & 1 & 0 & 0 & 0 & 2 & 1 & 0 & 0 & 1 & 0 & 0 & 0 \\
$12$ &0 & 1 & 1 & 1 & 1 & 2 & 1 & 1 & 0 & 1 & 1 & 1 & 0 & 1 & 0 & 0\\
$21$ &0 & 0 & 1 & 0 & 1 & 1 & 1 & 0 & 1 & 1 & 2 & 1 & 1 & 1 & 1 & 0\\
$22$ &0 & 0 & 0 & 1 & 0 & 0 & 1 & 2 & 0 & 0 & 0 & 1 & 1 & 1 & 2 & 3
\end{tabular}
\end{center} 
\end{ex}

\subsection{Model \eqref{model3}}
For Model \eqref{model3}, we consider the state space $\wordsnl$. This model is parametrized by the positive real variables given by $\gamma_1,\ldots,\gamma_S$ and $\beta_{12},\, \beta_{13},\, \ldots,\beta_{1,S}, \, \beta_{21},$ $ \beta_{23},\, \ldots, \, \beta_{S-1,S}$. Thus,
the number of parameters is $d=S^2$ and the number of transitions of state is $m = \left|
\wordsnl \right|$. Model \eqref{model3} is the toric model represented by the
$S^2 \times S(S-1)^{T-1}$ matrix $\A{3}_{S,T}$ defined below. The rows of
$\A{3}_{S,T}$ are indexed by $[S] \cup [S]^2 \setminus \left\{\, (i,i) \mid i \in [S]\,\right\}$ and the columns are indexed by
words $\wordsnl$.
\begin{enumerate}
    \item The entry of $\A{3}_{S,T}$ indexed by row $s \in [S]$ and
    column $w=(s_1,\ldots,s_T) \in \wordsnl$ is $1$ if $ s = s_1$ and $0$
    else.
    \item The entry of $\A{3}_{S,T}$ indexed by row $\sigma_1\sigma_2 \in [S]^2$, where $\sigma_1 \neq \sigma_2$, and column $w=(s_1,\ldots,s_T) \in \wordsnl$ is equal to $|\left\{\,
    i \in \{1,\ldots,T-1\} \mid \sigma_1 \sigma_2 = s_i s_{i+1} \, \right\}|$.
\end{enumerate}
\begin{ex}
For $S=3$ and $T=4$, after ordering $[S] \cup [S]^2 \setminus \left\{\, (i,i) \mid i \in [S]\,\right\}$
and $\wordsnl$ lexicographically, the matrix
$\A{3}_{3,4}$ is:
\begin{center}
\begin{tabular}{c|ccccccccccccc}
& \Side{\scriptsize 1212 } & \Side{\scriptsize 1213 } & \Side{\scriptsize 1231 } & \Side{\scriptsize 1232 } & \Side{\scriptsize 1312 } & \Side{\scriptsize 1313 } & \Side{\scriptsize 1321 } & \Side{\scriptsize 1323 } & \Side{\scriptsize 2121 } & \Side{\scriptsize 2123 } & \Side{\scriptsize 2131 } & \Side{\scriptsize 2132 }  & $\cdots$ \\
        \hline
1  & 1 & 1 & 1 & 1 & 1 & 1 & 1 & 1 & 0 & 0 & 0 & 0  & $\cdots$ \\
2  & 0 & 0 & 0 & 0 & 0 & 0 & 0 & 0 & 1 & 1 & 1 & 1  & $\cdots$ \\
3  & 0 & 0 & 0 & 0 & 0 & 0 & 0 & 0 & 0 & 0 & 0 & 0  & $\cdots$ \\
12 & 2 & 1 & 1 & 1 & 1 & 0 & 0 & 0 & 1 & 1 & 1 & 0  & $\cdots$ \\
13 & 0 & 1 & 1 & 0 & 0 & 2 & 1 & 1 & 0 & 0 & 0 & 1  & $\cdots$ \\
21 & 1 & 1 & 0 & 0 & 0 & 0 & 1 & 0 & 2 & 1 & 0 & 1  & $\cdots$ \\
23 & 0 & 0 & 0 & 1 & 1 & 0 & 0 & 1 & 0 & 1 & 1 & 0  & $\cdots$ \\
31 & 0 & 0 & 1 & 1 & 0 & 1 & 0 & 0 & 0 & 0 & 1 & 1  & $\cdots$ \\
32 & 0 & 0 & 0 & 0 & 1 & 0 & 1 & 1 & 0 & 0 & 0 & 0  & $\cdots$ 
\end{tabular}\\
$\phantom{A}$
\begin{tabular}{ccccccccccccc|c}
$\cdots$ & \Side{\scriptsize 2312 } & \Side{\scriptsize 2313 } & \Side{\scriptsize 2321 } & \Side{\scriptsize 2323 } & \Side{\scriptsize 3121 } & \Side{\scriptsize 3123 } & \Side{\scriptsize 3131 } & \Side{\scriptsize 3132 } & \Side{\scriptsize 3212 } & \Side{\scriptsize 3213 } & \Side{\scriptsize 3231 } & \Side{\scriptsize 3232 } \\
        \hline
$\cdots$  & 0 & 0 & 0 & 0 & 0 & 0 & 0 & 0 & 0 & 0 & 0 & 0 & 1 \\
$\cdots$  & 1 & 1 & 1 & 1 & 0 & 0 & 0 & 0 & 0 & 0 & 0 & 0 & 2 \\
$\cdots$  & 0 & 0 & 0 & 0 & 1 & 1 & 1 & 1 & 1 & 1 & 1 & 1 & 3 \\
$\cdots$  & 0 & 0 & 0 & 0 & 1 & 1 & 1 & 0 & 0 & 0 & 0 & 0 & 12\\
$\cdots$  & 1 & 1 & 0 & 0 & 0 & 0 & 0 & 1 & 1 & 1 & 0 & 0 & 13\\
$\cdots$  & 1 & 0 & 1 & 0 & 1 & 1 & 0 & 1 & 0 & 0 & 0 & 0 & 21\\
$\cdots$  & 0 & 1 & 1 & 2 & 0 & 0 & 1 & 0 & 0 & 0 & 1 & 1 & 23\\
$\cdots$  & 0 & 1 & 0 & 0 & 1 & 0 & 1 & 0 & 2 & 1 & 1 & 0 & 31\\
$\cdots$  & 1 & 0 & 1 & 1 & 0 & 1 & 0 & 1 & 0 & 1 & 1 & 2 & 32
\end{tabular}

\end{center} 
\end{ex}
\subsection{Model \eqref{model4}}
Lastly, Model \eqref{model4} is parametrized by
$S(S-1)$ positive real variables. That is, it is parametrized by
$\beta_{12},\beta_{12},\ldots,\beta_{1,S},\beta_{21},\beta_{23},\ldots,\beta_{S-1,S}$.
Thus, the number of parameters is $d=S(S-1)$ and the number of transitions is $m =
\left| \wordsnl \right|$. Model \eqref{model4} is the toric model represented
by the $S(S-1) \times S(S-1)^{T-1}$ matrix $\A{4}_{S,T}$ whose rows are indexed by $[S]^2 \setminus \left\{\,
(i,i) \mid i \in [S]\,\right\}$ and the columns are indexed by words
$\wordsnl$.  The entry of $\A{4}_{S,T}$ indexed by row $\sigma_1\sigma_2 \in
[S]^2$, where $\sigma_1 \neq \sigma_2$, and column $w=(s_1,\ldots,s_T) \in
\wordsnl$ is equal to $|\left\{\, i \in \{1,\ldots,T-1\} \mid \sigma_1 \sigma_2
= s_i s_{i+1} \, \right\}|$.
\begin{ex}
Ordering $[S]^2 \setminus \left\{\, (i,i) \mid i \in [S]\,\right\}$ and
$\wordsnl$ lexicographically and letting $S=3$ and $T=4$, the matrix
$\A{4}_{3,4}$ is:
\begin{center}
\begin{tabular}{c|ccccccccccccc}
& \Side{\scriptsize 1212 } & \Side{\scriptsize 1213 } & \Side{\scriptsize 1231 } & \Side{\scriptsize 1232 } & \Side{\scriptsize 1312 } & \Side{\scriptsize 1313 } & \Side{\scriptsize 1321 } & \Side{\scriptsize 1323 } & \Side{\scriptsize 2121 } & \Side{\scriptsize 2123 } & \Side{\scriptsize 2131 } & \Side{\scriptsize 2132 } & $\cdots$ \\
\hline
12 & 2 & 1 & 1 & 1 & 1 & 0 & 0 & 0 & 1 & 1 & 1 & 0 & $\cdots$ \\
13 & 0 & 1 & 1 & 0 & 0 & 2 & 1 & 1 & 0 & 0 & 0 & 1 & $\cdots$ \\
21 & 1 & 1 & 0 & 0 & 0 & 0 & 1 & 0 & 2 & 1 & 0 & 1 & $\cdots$ \\
23 & 0 & 0 & 0 & 1 & 1 & 0 & 0 & 1 & 0 & 1 & 1 & 0 & $\cdots$ \\
31 & 0 & 0 & 1 & 1 & 0 & 1 & 0 & 0 & 0 & 0 & 1 & 1 & $\cdots$ \\
32 & 0 & 0 & 0 & 0 & 1 & 0 & 1 & 1 & 0 & 0 & 0 & 0 & $\cdots$ 
\end{tabular}\\
$\phantom{A}$
\begin{tabular}{ccccccccccccc|c}
$\cdots$ & \Side{\scriptsize 2312 } & \Side{\scriptsize 2313 } & \Side{\scriptsize 2321 } & \Side{\scriptsize 2323 } & \Side{\scriptsize 3121 } & \Side{\scriptsize 3123 } & \Side{\scriptsize 3131 } & \Side{\scriptsize 3132 } & \Side{\scriptsize 3212 } & \Side{\scriptsize 3213 } & \Side{\scriptsize 3231 } & \Side{\scriptsize 3232 } \\
\hline
$\cdots$ & 0 & 0 & 0 & 0 & 1 & 1 & 1 & 0 & 0 & 0 & 0 & 0 & 12 \\
$\cdots$ & 1 & 1 & 0 & 0 & 0 & 0 & 0 & 1 & 1 & 1 & 0 & 0 & 13 \\
$\cdots$ & 1 & 0 & 1 & 0 & 1 & 1 & 0 & 1 & 0 & 0 & 0 & 0 & 21 \\
$\cdots$ & 0 & 1 & 1 & 2 & 0 & 0 & 1 & 0 & 0 & 0 & 1 & 1 & 23 \\
$\cdots$ & 0 & 1 & 0 & 0 & 1 & 0 & 1 & 0 & 2 & 1 & 1 & 0 & 31 \\
$\cdots$ & 1 & 0 & 1 & 1 & 0 & 1 & 0 & 1 & 0 & 1 & 1 & 2 & 32
\end{tabular}

\end{center} 
\end{ex}

\subsection{Sufficient statistics, ideals, and Markov basis}

We refer to the matrices $\A{1}$, $\A{2}$,$\A{3}$, and $\A{4}$ as \emph{design
matrices} throughout this paper.
Let $\mathcal A$ be $\A{1}$ or $\A{2}$, for $w \in \words$ we denote the column of $\mathcal A$ indexed by $w$ by $\mathcal
A_{S,T}(w)$ or simply by $\mathcal A_w$ when $S$ and $T$ are understood. Thus, by extending linearly, the map $\mathcal A: \V(\words) \rightarrow \R^l$ is well-defined,
where $l$ is either $S^2$ or $S(S-1)$ respectively, depending on the model.

Similarly let $w \in \wordsnl$, and $\mathcal A$ be $\A{3}$ or $\A{4}$. We
let $\mathcal A_{S,T}(w)$ or $\mathcal A_w$ denote the column of $\mathcal A$ indexed by $w$. Again,
we extend the map linearly so that $\mathcal A: \V(\wordsnl) \rightarrow \R^l$
is well-defined, where $l$ is either $S^2$ or $S(S-1)$ depending on the model.

Let $W = \{ w_1,\ldots,w_N \} \in \MS(\words)$ ($\in \MS(\wordsnl$ resp.) and we 
regard $W$ as observed data which can be summarized in the \emph{data vector}
$\ve u \in \N^{S^T}$ ($\ve u \in \N^{S(S-1)^{T-1}}$ resp.)
We index $\ve u$ by words in $\words$ ( or $\wordsnl$ resp.), and $u_{w}$ is equal to the
number of words in $W$ equal to $w$. Let $\mathcal A$ be one of the design matrices $\A{1},\, \A{2},\, \A{3},$ or $\A{4}$; then, as $\mathcal A$ is linear, then $\mathcal A(\ve u)$ is well-defined. We also adopt the notation, $\mathcal A(W) := \mathcal A(\ve u)$.  For $W$ from $\MS(\words)$ or $\MS(\wordsnl)$ (depending on the model) with data vector $\ve u$, then $\cA \ve u$ is the \emph{sufficient statistic} for the corresponding model.  Often the data
vector $\ve u$ is also referred to as a \emph{contingency table} and $\cA \ve u$ are
referred to as the \emph{marginals}. (For a proof of sufficient statistics for
Model \eqref{model1} see Hara and Takemura\cite{Hara:2010vn} and a proof of sufficient
statistics for Model \eqref{model2} see Hara and Takemura\cite{Hara:2010uq}.)

The design matrices $\A{1}$ and $\A{2}$ above define two toric ideals which are of our interest, as their set of generators are in bijection with the Markov bases of these models. For the Model \eqref{model2}, let $I_{\A{2}}$ be the toric ideal defined as the kernel of the homomorphism of polynomial rings $\psi:\K[\{\,p(\w) \mid \w \in \words\,\}] \rightarrow \K[\{\,\beta_{ij} \mid i,j \in
[S]\,\}]$ defined by $\psi: p(\w) \mapsto \beta_{s_1s_2}\cdots
\beta_{s_{T-1}s_T}$, where $\K$ is a field and $\{\,p(\w) \mid \w \in \words\,\}$ is
regarded as a set of indeterminates. Similarly, the toric ideal $I_{\A{1}}$ corresponding to Model \eqref{model1} is defined as
the kernel of the homomorphism $\psi': \K[\{\,p(\w) \mid \w
\in \words\,\}] \rightarrow \K[\{\,\beta_{ij} \mid i,j \in
[S]\,\}][\{\,\gamma_k \mid k \in [S]\,\}]$ defined by $\psi': p(\w)
\mapsto \gamma_{s_1}\beta_{s_1s_2}\cdots \beta_{s_{T-1}s_T}$, where again
$\{\,p(\w) \mid \w \in \words\,\}$ is regarded as a set of indeterminates.

The design matrices $\A{3}$ and $\A{4}$ from Model \eqref{model3} and Model \eqref{model4}, also define two toric ideals, which can be defined in a similar way as those from models \eqref{model1} and \eqref{model2} respectively, or they can also be regarded as a specialization of the ideals $I_{\A{3}}$ and $I_{\A{4}}$ respectively, when we set $\beta_{i,i}=0$ for all $i\in[S]$.

Let $\cA \in \Z^{d \times m}$ be a design matrix for one of our models.
Then the set of all contingency tables (data vectors) satisfying a given marginals $\ve b \in
\Z^{d}_+$ is called a \emph{fiber}  which we denote by $\F_{\ve b} = \{\,\ve x
\in \Z^{m}_+ \mid \cA (\ve x) = \ve b \,\}$.  A \emph{move} $\ve z \in \Z^{m}$ is an
integer vector satisfying $\cA (\ve z) = 0$.  A {\em Markov basis} for a model
defined by the design matrix $\cA$ is defined as a finite set of moves $\mathcal
Z$ satisfying that for all $\ve b$ and all pairs $\ve x, \ve y \in \F_{\ve b}$
there exists a sequence $\ve z_1,\ldots,\ve z_K \in \mathcal Z$ such that

\begin{equation*}
\ve y = \ve x + \sum_{k=1}^K \ve z_k, \quad \ve x + \sum_{k=1}^l \ve
z_k \geq \ve 0, \; l=1,\ldots,K.
\end{equation*}
A
{\em minimal Markov basis} is a Markov basis which is minimal in terms of inclusion.
 See Diaconis and Sturmfels\cite{diaconis-sturmfels} for
more details on Markov bases and their toric ideals.  

\subsection{State Graph}

Given any multiset
$W \in \MS(\words)$ we consider the directed multigraph called the \emph{state graph} 
$G(W)$. The vertices of $G(W)$ are given by the states $[S]$ and the directed edges $i\to j$ are given
by the transitions from state $i$ to $j$ in $\ve w \in W$. Thus, we regard $\ve w \in W$ as a path of length $T-1$ in $G(W)$. See Figure \ref{stateexone} for an 
example of the state graph $G(W)$ of the multiset of paths with length $3$.
$W = \{(112),(223),(331)\}$ where $[S]=\{1, 2, 3\}$ and $T = 3$. However, notice
that the state graph in Figure \ref{stateexone} is the same for
another multiset of paths $\overline{W} = \{(122),(233),(311)\}$.

\begin{figure}[!htp]
\begin{center}
\scalebox{0.6}{
\includegraphics{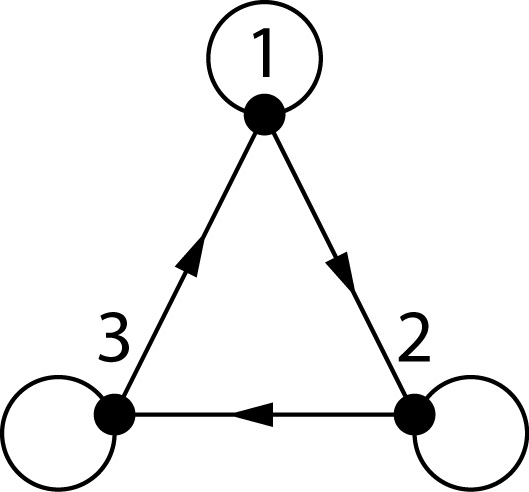}
}
\end{center}
\caption{The state graph $G(W)$ of $W = \{(112),(223),(331)\}$. Also the state graph $G(\overline{W})$ of $\overline{W} = \{(122),(233),(311)\}$.}
\label{stateexone}
\end{figure}

\begin{proposition} $\phantom{A}$
\label{fiberstateequiv}
\begin{enumerate}
\item Let $W,\overline W \in \MS(\words)$. $\A{2} (W)  = \A{2} (\overline W)$ if and only if $G(W) = G(\overline W)$. 
\item Let $W,\overline W \in \MS(\wordsnl)$. $\A{4} (W)  = \A{4} (\overline W)$ if and only if $G(W) = G(\overline W)$. 
\end{enumerate}
\end{proposition}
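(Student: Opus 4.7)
The plan is to observe that the map $\mathcal{A}^{(2)}$ (respectively $\mathcal{A}^{(4)}$) is, essentially by definition, the edge-multiplicity vector of the associated state graph, so the proposition collapses into unfolding definitions. I will treat both parts in parallel, since the arguments are identical modulo restricting to $[S]^2 \setminus \{(i,i)\}$ in the self-loop-free case.

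The first step is to reinterpret a single column. Fix $\w = (s_1,\dots,s_T) \in \words$. By construction, the entry of $\mathcal{A}^{(2)}_\w$ in row $\sigma_1\sigma_2 \in [S]^2$ counts exactly the number of indices $i$ with $s_i s_{i+1} = \sigma_1 \sigma_2$, which is precisely the multiplicity of the directed edge $\sigma_1 \to \sigma_2$ in the path $G(\{\w\})$. Hence $\mathcal{A}^{(2)}_\w$ is the edge-multiplicity vector of $G(\{\w\})$ as a directed multigraph on vertex set $[S]$.

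The second step is to extend this to multisets. Since $\mathcal{A}^{(2)}$ is linear and $G(W) = \bigsqcup_{\w \in W} G(\{\w\})$ as a union of multisets of edges, summing over $\w \in W$ yields
\begin{equation*}
\mathcal{A}^{(2)}(W) \;=\; \sum_{\w \in W} \mathcal{A}^{(2)}_\w,
\end{equation*}
whose $\sigma_1\sigma_2$-entry is the multiplicity of the directed edge $\sigma_1 \to \sigma_2$ in $G(W)$. So $\mathcal{A}^{(2)}(W)$ is precisely the edge-multiplicity vector of $G(W)$.

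The final step is the equivalence. Two directed multigraphs on the common vertex set $[S]$ coincide if and only if their edge-multiplicity vectors (indexed by $[S]^2$) coincide; this immediately gives both directions of part (1). For part (2), the identical argument applies with $\w \in \wordsnl$: no transition $s_i s_{i+1}$ is of the form $(i,i)$, so the rows of $\mathcal{A}^{(4)}$, indexed by $[S]^2 \setminus \{(i,i) : i \in [S]\}$, still record exactly the edge multiplicities of $G(W)$, and the same biconditional follows. There is no genuine obstacle here; the content of the proposition is simply that the sufficient statistic for Model~\eqref{model2} (respectively Model~\eqref{model4}) is the edge-count data of the state graph, which is visible from the definition of the design matrix.
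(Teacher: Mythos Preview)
Your proposal is correct. The paper states this proposition without proof, treating it as an immediate consequence of the definitions of the design matrices and the state graph; your argument simply makes explicit what the paper leaves implicit, namely that $\A{2}(W)$ (resp.\ $\A{4}(W)$) is by construction the edge-multiplicity vector of $G(W)$, so equality of these vectors is equivalent to equality of the multigraphs on the fixed vertex set $[S]$.
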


The equivalence of the state graphs is not sufficient to show that $W$ and $W'$
are in the same fiber when the initial states (Model \eqref{model1} and Model
\eqref{model3}) are considered. We extend the definition of the state graph to
incorporate the initial states as follows: Let $W \in \MS(\words)$ and define
the \emph{marked state graph} $\overline G(W)$ to be the same as $G(W)$ but with the additional condition that every vertex $v$ of $\overline G(W)$ is marked with the number of words $ w\in W$ that start at state $v$. We illustrate this definition in Figure
\ref{markedstateexone}.

\begin{figure}[!htp]
\begin{center}
\scalebox{0.6}{
\includegraphics{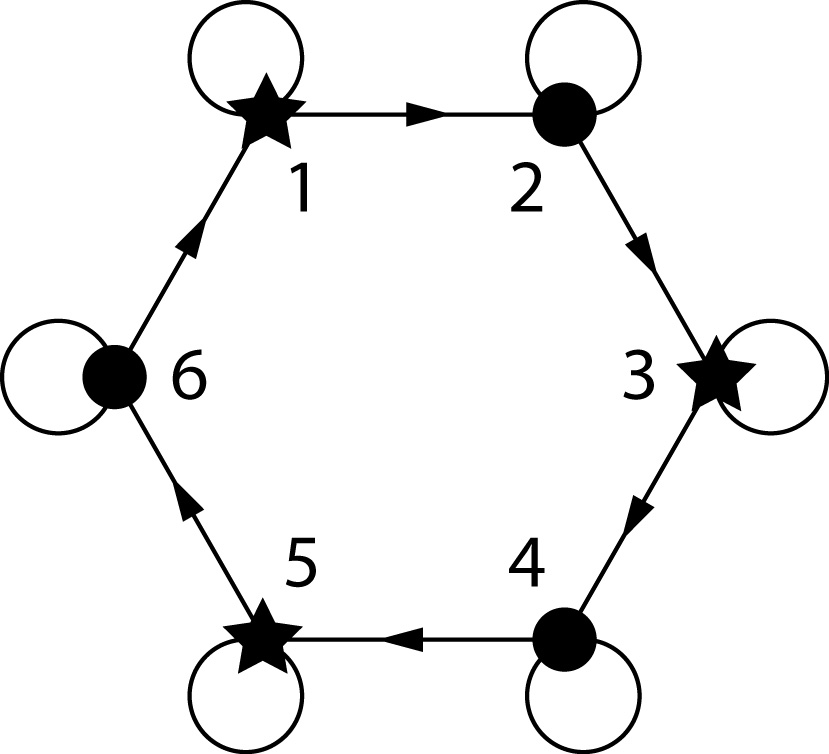}
}
\end{center}
\caption{The marked state graph $\overline G(W)$ of
    $W = \{(11223),(33445),(55661)\} \in \cp(\words)$. The vertices
    $\{1,3,5\}$ are marked once since they are the initial states of the paths
    in $W$.}
\label{markedstateexone}
\end{figure}

\begin{proposition} $\phantom{A}$
\label{fiberstateequivinit}
\begin{enumerate}
\item Let $W,\overline W \in \MS(\words)$. $\A{1} (W)  = \A{1} (\overline W)$ if and only if $\overline G(W) = \overline G(\overline W)$. 
\item Let $W,\overline W \in \MS(\wordsnl)$. $\A{3} (W)  = \A{3} (\overline W)$ if and only if $\overline G(W) = \overline G(\overline W)$. 
\end{enumerate}
\end{proposition}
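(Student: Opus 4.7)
The plan is to reduce this proposition to Proposition \ref{fiberstateequiv} by separating, in each design matrix, the block of rows that records transitions from the block of rows that records initial states. Recall that $\A{1}_{S,T}$ has rows indexed by $[S] \cup [S]^2$: the rows indexed by $[S]^2$ coincide with all the rows of $\A{2}_{S,T}$, and the rows indexed by $[S]$ record, for a word $w = (s_1,\ldots,s_T)$, a $1$ in row $s_1$ and $0$ elsewhere. The analogous decomposition holds for $\A{3}_{S,T}$ versus $\A{4}_{S,T}$, with rows $[S]^2 \setminus \{(i,i)\}$ replacing $[S]^2$ and the column index set restricted to $\wordsnl$.

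First I would prove the forward direction for part (1). Assuming $\A{1}(W) = \A{1}(\overline W)$, restricting to the $[S]^2$ block gives $\A{2}(W) = \A{2}(\overline W)$, so by Proposition \ref{fiberstateequiv}(1) the underlying directed multigraphs satisfy $G(W) = G(\overline W)$. Restricting to the $[S]$ block, the entry at row $s$ of $\A{1}(W)$ equals $\sum_{w \in W}[s_1^{(w)} = s]$, i.e., the number of paths in $W$ starting at $s$, which is precisely the mark on vertex $s$ in $\overline G(W)$. Equality of these rows for each $s$ therefore gives matching markings, and combined with $G(W) = G(\overline W)$ yields $\overline G(W) = \overline G(\overline W)$.

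For the reverse direction of part (1), I would assume $\overline G(W) = \overline G(\overline W)$. Forgetting the markings gives $G(W) = G(\overline W)$, so by Proposition \ref{fiberstateequiv}(1) the $[S]^2$ block of $\A{1}(W)$ agrees with that of $\A{1}(\overline W)$. The matching of marks at each vertex $s$ translates directly into the equality of the entries in row $s$ of $\A{1}(W)$ and $\A{1}(\overline W)$ (both counting words starting at $s$). Combining the two blocks gives $\A{1}(W) = \A{1}(\overline W)$.

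Part (2) will follow by the exact same argument, replacing $\A{1}, \A{2}, \words$ by $\A{3}, \A{4}, \wordsnl$ and using Proposition \ref{fiberstateequiv}(2) in place of \ref{fiberstateequiv}(1); the only thing to check is that columns of $\A{3}$ are still indexed by $\wordsnl$ so the initial-state entries are well-defined, which is immediate from the definition. I do not anticipate a real obstacle: the argument is a bookkeeping reduction to the already-established unmarked case, and the main thing to be careful about is simply the clean separation of the row blocks of the design matrix into the transition part and the initial-state part.
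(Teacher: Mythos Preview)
Your argument is correct. The paper does not actually give a proof of Proposition~\ref{fiberstateequivinit} (nor of Proposition~\ref{fiberstateequiv}); both are stated as immediate consequences of the definitions of the design matrices and of the (marked) state graph. Your block decomposition---separating the rows indexed by $[S]$ from those indexed by $[S]^2$ (respectively $[S]^2\setminus\{(i,i)\}$)---is exactly the natural way to make this explicit, and it cleanly reduces the marked case to the unmarked one already recorded in Proposition~\ref{fiberstateequiv}.
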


If $i$ is a vertex of the (possibly marked) multigraph $G$ (or $\overline G$), then we let $G_{i+}$ denote
\emph{out-degree} of $i$, which is the number of directed edges leaving vertex
$i$. Similarly we let $G_{+i}$ denote the \emph{in-degree} of $i$, which is the
number of directed edges entering vertex $i$.

\section{Smith Normal Form}\label{sec:SNF}

For an integer matrix $A\in \Z^{d\times m}$, we consider the Smith normal form $D$ of $A$, which is a diagonal matrix $D$ for which there exists unimodular matrices $U\in\Z^{d\times d}$ and $V\in \Z^{m\times m}$, such that $UAV=D$. The Smith normal form encodes the $\Z$-module structure of the abelian group $\Z A :=\{n_1 A_1+\cdots+n_m A_m \mid n_i\in\Z\}$. Some additional material about the Smith normal form for matrices with entries over a PID can be found in the book of C. Yap\cite{yap2000}.  In this section, we explore the Smith normal form when the matrices to consider are those design matrices of the THMC model from the previous sections. This study will be important for Sections \ref{sec:normality} and \ref{sec:polytope}, where we study normality of the toric ideal associated to the model.

Our first result is the characterization of the Smith normal form for the design matrix of the model \eqref{model2}.

\begin{proposition}\label{SNFmodel2}
For any $S$ and $T$, the Smith normal form of the design matrix $\A{2}_{S,T}$ is $D=\diag(1,\ldots,1,T-1)$.
\end{proposition}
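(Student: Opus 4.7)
The plan is to compute the cokernel $\Z^{S^2}/\Z\A{2}_{S,T}$ of the integer column span and show it is cyclic of order $T-1$. Combined with the image having full rank $S^2$, the invariant factor theorem will then force the Smith normal form (which has $S^2$ diagonal entries) to be $\diag(1,\ldots,1,T-1)$. The trivial case $T=2$ gives $\A{2}_{S,2}$ with columns $e_{(s_1,s_2)}$ spanning $\Z^{S^2}$, so I focus on $T\geq 3$.

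Every word of length $T$ contains exactly $T-1$ transitions, so every column of $\A{2}_{S,T}$ has entry-sum $T-1$. Hence the linear functional $\sigma\colon\Z^{S^2}\to\Z$ given by $\sigma(b)=\sum_{(i,j)\in[S]^2}b_{(i,j)}$ descends to a surjection $\bar\sigma\colon\Z^{S^2}/\Z\A{2}_{S,T}\twoheadrightarrow\Z/(T-1)\Z$. The crux of the argument is to show that $\bar\sigma$ is also injective, i.e.\ that the sum-zero sublattice $L=\ker\sigma$ is contained in $\Z\A{2}_{S,T}$. Since $L$ is generated by the $S^2-1$ elementary differences $e_{(i,j)}-e_{(1,1)}$ with $(i,j)\neq(1,1)$, it suffices to realize each such difference as an integer combination of columns.

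Writing $w_0=1^T$, we have $\A{2}(w_0)=(T-1)e_{(1,1)}$. For $j\neq 1$, the word $1^{T-1}j$ has column $(T-2)e_{(1,1)}+e_{(1,j)}$, producing $e_{(1,j)}-e_{(1,1)}\in\Z\A{2}$; symmetrically $i\cdot 1^{T-1}$ yields $e_{(i,1)}-e_{(1,1)}$. For $i\neq 1$, comparing $i\cdot i\cdot 1^{T-2}$ with $i\cdot 1^{T-1}$ yields $e_{(i,i)}-e_{(1,1)}$. The subtle case is $i,j\neq 1$ with $i\neq j$: no single word realizes the target column $(T-2)e_{(1,1)}+e_{(i,j)}$, because the transition $(i,j)$ is disconnected from the loop at vertex $1$ in any state graph. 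I would resolve this by comparing $w_1=i\cdot j\cdot 1^{T-2}$ with $w_2=i\cdot 1^{T-2}\cdot j$ (both of length $T$); a direct tally of transitions gives $\A{2}(w_1)-\A{2}(w_2)=e_{(i,j)}+e_{(j,1)}-e_{(i,1)}-e_{(1,j)}$. Combining this identity with the three previously established differences $e_{(i,1)}-e_{(1,1)}$, $e_{(1,j)}-e_{(1,1)}$ and $-(e_{(j,1)}-e_{(1,1)})$ then isolates $e_{(i,j)}-e_{(1,1)}$.

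Once $L\subseteq\Z\A{2}_{S,T}$ is established, we obtain $\Z\A{2}_{S,T}=\sigma^{-1}((T-1)\Z)$, so the image is a rank-$S^2$ sublattice (it contains $L$ of rank $S^2-1$ together with $(T-1)e_{(1,1)}$, which is independent of $L$), and the cokernel is cyclic of order $T-1$. The invariant factor theorem then forces the Smith normal form of $\A{2}_{S,T}$ to be $\diag(1,\ldots,1,T-1)$. The main obstacle of the argument is the fourth case above: since no single word realizes the desired transition vector directly, a telescoping through the pair $(w_1,w_2)$ together with the \emph{elementary} differences from the earlier cases is required.
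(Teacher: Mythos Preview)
Your proof is correct and takes a genuinely different route from the paper's. The paper argues constructively: it orders the first $S^2$ columns to be those of the words $1\cdots 1\,s_is_j$, performs the single row operation of adding all rows to the first (making the first row identically $T-1$), and then clears by explicit column operations to reach $\diag(T-1,1,\ldots,1)$ on the first $S^2$ columns. Your argument is the abstract version: you identify the image lattice as exactly $\sigma^{-1}((T-1)\Z)$ by showing that the sum-zero sublattice $L$ is contained in $\Z\A{2}$, and then read off the invariant factors from the cyclic cokernel.

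Both proofs rest on the same two facts (every column has entry-sum $T-1$, and suitable differences of columns generate $L$), but your packaging is cleaner and shorter. In fact the paper explicitly remarks that its proof ``is, by far, not the shortest argument'' but was chosen because it exhibits the unimodular matrix $U$ (all $1$'s on the first row, identity below), which is then reused to prove Lemma~\ref{colspanModel2}. Your approach has the pleasant side effect of giving that lemma for free: the equality $\Z\A{2}=\sigma^{-1}((T-1)\Z)$ \emph{is} the statement that $\ve y\in\Z\A{2}$ iff $\sum_i y_i\equiv 0\pmod{T-1}$. So what the paper gains in explicitness (knowing $U$) you gain in economy, and you lose nothing that the paper actually needs downstream.
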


\begin{proof}
For $S$ and $T$ fixed, let $\A{2}$ be the design matrix. We order the columns of $\A{2}$ so that the first $S^2$ columns correspond to paths of the form $11\ldots 1s_is_j$ for $s_i,s_j\in[S]$ ordered lexicographically. To simplify the notation in this proof, when $w = 11\ldots1s_is_j$, we write only $w=(s_is_j)$, as the prefix $11\ldots1$ is understood. For example, when $S = 3$, the first $9$ columns of $\A{2}$ are:
\[
\begin{array}{c|ccccccccc}
 & (11) & (12) & (13) & (21) & (22) & (23) & (31) & (32) &
(33)\\\hline
11 & T - 1 & T-2 & T-2 & T-3 & T-3 & T-3  &  T-3 & T-3 & T-3 \\
12 & 0 & 1 & 0 & 1 & 1 & 1 & 0 & 0 & 0 \\
13 & 0 & 0 & 1 & 0 & 0 & 0 & 1 & 1 & 1\\
21 & 0 & 0 & 0 & 1 & 0 & 0 & 0 & 0 & 0 \\
22 & 0 & 0 & 0 & 0 & 1 & 0 & 0 & 0 & 0\\
23 & 0 & 0 & 0 & 0 & 0 & 1 & 0 & 0 & 0 \\
31 & 0 & 0 & 0 & 0 & 0 & 0 & 1 & 0 & 0\\
32 & 0 & 0 & 0 & 0 & 0 & 0 & 0 & 1 & 0\\
33 & 0 & 0 & 0 & 0 & 0 & 0 & 0 & 0 & 1\\
\end{array}
\]

We will use this example throughout this proof, in order to illustrate the arguments of the proof. 

Now, we apply the row operation that add all other rows to the first row. This row operation is encoded by a unimodular matrix $U$ that has ones in the first row and in the diagonal, and zero in all other entries, 
\[
U = \left(
\begin{array}{cccccc}
1 & 1 & 1 &\cdots & 1 & 1\\
0 & 1 & 0 & \cdots & 0 & 0\\
0 & 0 & 1 & \cdots &  0 & 0\\
0 & 0 & 0 & \ddots &  1 & 0\\
0 & 0 & 0 & \cdots &  0 & 1\\
\end{array}
\right).
\]

For the case $S=3$ from above, the first 9 columns of $U\A{2}$ are:  
\[
\begin{array}{c|ccccccccc}
 & (11) & (12) & (13) & (21) & (22) & (23) & (31) & (32) &
(33)\\ \hline
11 & T - 1& T-1& T-1 & T-1&T-1&T-1 & T-1&T-1&T-1\\
12 & 0 & 1 & 0 & 1 & 1 & 1 & 0 & 0 & 0 \\
13 & 0 & 0 & 1 & 0 & 0 & 0 & 1 & 1 & 1\\
21 & 0 & 0 & 0 & 1 & 0 & 0 & 0 & 0 & 0 \\
22 & 0 & 0 & 0 & 0 & 1 & 0 & 0 & 0 & 0\\
23 & 0 & 0 & 0 & 0 & 0 & 1 & 0 & 0 & 0 \\
31 & 0 & 0 & 0 & 0 & 0 & 0 & 1 & 0 & 0\\
32 & 0 & 0 & 0 & 0 & 0 & 0 & 0 & 1 & 0\\
33 & 0 & 0 & 0 & 0 & 0 & 0 & 0 & 0 & 1\\
\end{array} 
\]

Notice that the entries in the first row are all equal to $T-1$, since the column sum of the design matrix is precisely $T-1$.
Now, for every $s\neq 1$, we subtract the column $(1,s)$ from the columns $(s,s_j)$ for all $s_j\in [S]$. For instance, in our example, we subtract the column $(12)$ from the columns
$(21)$, $(22)$, and $(23)$, and similarly the column $(13)$ from the columns $(31),\, (32)$, and $(33)$, to get:
\[
\begin{array}{c|ccccccccc}
 & (11) & (12) & (13) & (21) & (22) & (23) & (31) & (32) &
(33)\\\hline
11 & T - 1& T-1& T-1 & 0& 0& 0 & 0& 0& 0\\
12 & 0 & 1 & 0 & 0 & 0 & 0 & 0 & 0 & 0 \\
13 & 0 & 0 & 1 & 0 & 0 & 0 & 0 & 0 & 0\\
21 & 0 & 0 & 0 & 1 & 0 & 0 & 0 & 0 & 0 \\
22 & 0 & 0 & 0 & 0 & 1 & 0 & 0 & 0 & 0\\
23 & 0 & 0 & 0 & 0 & 0 & 1 & 0 & 0 & 0 \\
31 & 0 & 0 & 0 & 0 & 0 & 0 & 1 & 0 & 0\\
32 & 0 & 0 & 0 & 0 & 0 & 0 & 0 & 1 & 0\\
33 & 0 & 0 & 0 & 0 & 0 & 0 & 0 & 0 & 1\\
\end{array}
\]

Then, we subtract the first column $(11)$ to those columns indicated by $(1,s)$ for $s\neq 1$.
In our example, we subtract the first column to the columns $(12)$ and $(13)$ to have:
\[
\begin{array}{c|ccccccccc}
 & (11) & (12) & (13) & (21) & (22) & (23) & (31) & (32) &
(33)\\ \hline
11 & T - 1&  0& 0 & 0& 0& 0 & 0& 0&0\\
12 & 0 & 1 & 0 & 0 & 0 & 0 & 0 & 0 & 0 \\
13 & 0 & 0 & 1 & 0 & 0 & 0 & 0 & 0 & 0\\
21 & 0 & 0 & 0 & 1 & 0 & 0 & 0 & 0 & 0 \\
22 & 0 & 0 & 0 & 0 & 1 & 0 & 0 & 0 & 0\\
23 & 0 & 0 & 0 & 0 & 0 & 1 & 0 & 0 & 0 \\
31 & 0 & 0 & 0 & 0 & 0 & 0 & 1 & 0 & 0\\
32 & 0 & 0 & 0 & 0 & 0 & 0 & 0 & 1 & 0\\
33 & 0 & 0 & 0 & 0 & 0 & 0 & 0 & 0 & 1\\
\end{array}
\]

Lastly, we subtract the first column $(11)$ from all the other columns. Since these last operations involved only columns, we can encode them with a unimodular matrix $V'$.
In this way, we can write
\[
U \A{2} V' = \left(
\begin{array}{ccc}
T-1 &  0 & 0\\
0 & I_{S^2 - 1} & \bar{A}\\
\end{array}
\right) 
\]
where $I_{S^2 - 1} $ is the $(S^2-1) \times (S^2-1) $
identity matrix and $\bar{A}$ is some $(S^2-1)\times (S^{T-1}-S^2)$ integer matrix.

We can use the first $S^2$ columns, to bring $U\A{2}V'$ to the form $D=\diag(T-1,1,\ldots,1)$, so there exists a unimodular matrix $V$ such that
\[
U\A{2}V = \left(
\begin{array}{ccc}
T-1 &  0 & 0\\
0 & I_{S^2 - 1} & 0\\
\end{array}
\right) =D .
\]
\end{proof}

Although the proof we just presented is, by far, not the shortest argument we could find, we consider this constructive proof was worth to present, as it shows the special structure of the unimodular matrix $U$. We present now an important result related to the integer lattice generated by the columns of the design matrix.

Let $\Z \A{2}$ be the integer lattice generated by the columns of $\A{2}$, i.e.
\[
\Z \A{2} = \left\{\sum_{\w\in\words} \alpha_\w \A{1}_\w \ |\ \alpha_\w\in\Z\right\},
\]
where $\A{2}_\w$ is the column of $\A{2}$ corresponding to the path $\w$.

\begin{lem}\label{colspanModel2}
Let $\A{2}$ be the design matrix for some $S$ and $T$; then, $\y = (y_1,\ldots,y_{S^2})^\top\in \Z\A{2}$ if and only if $y_1+\cdots+y_{S^2}\equiv 0 \mod (T-1)$.
\end{lem}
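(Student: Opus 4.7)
The plan is to derive the lemma by combining Proposition \ref{SNFmodel2} with a simple column-sum observation, then closing the argument by index counting.

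First I would handle the direction $(\Rightarrow)$. Every column of $\A{2}_{S,T}$ has coordinate sum equal to $T-1$: the column indexed by a word $\w=(s_1,\dots,s_T)$ records, in row $\sigma_1\sigma_2$, the number of indices $i\in\{1,\dots,T-1\}$ such that $s_is_{i+1}=\sigma_1\sigma_2$, and summing these entries over $\sigma_1\sigma_2\in[S]^2$ just counts each of the $T-1$ transitions of $\w$ exactly once. Consequently, any integer combination $\y=\sum_{\w}\alpha_\w \A{2}_\w$ has coordinate sum $(T-1)\sum_\w \alpha_\w$, which is $\equiv 0\pmod{T-1}$.

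For the direction $(\Leftarrow)$, I would argue by comparing indices in $\Z^{S^2}$. Define
\[
L=\{\y\in\Z^{S^2}\mid y_1+\cdots+y_{S^2}\equiv 0 \pmod{T-1}\},
\]
which is the kernel of the surjective homomorphism $\Z^{S^2}\to\Z/(T-1)\Z$ given by $\y\mapsto\sum y_i$, so $[\Z^{S^2}:L]=T-1$. On the other hand, Proposition \ref{SNFmodel2} gives the Smith normal form $D=\diag(1,\dots,1,T-1)$, so $[\Z^{S^2}:\Z\A{2}]=\prod_i d_i=T-1$ as well. The first direction already shows $\Z\A{2}\subseteq L$, and since both are sublattices of $\Z^{S^2}$ of the same finite index, they must coincide.

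I do not anticipate any real obstacle once Proposition \ref{SNFmodel2} is in hand; the only conceptual point is recognizing that the SNF pins down the index of the column lattice, which together with the column-sum observation identifies $\Z\A{2}$ with $L$ uniquely.
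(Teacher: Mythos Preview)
Your proof is correct, but it takes a somewhat different route from the paper's. The paper exploits the explicit shape of the unimodular matrix $U$ constructed in Proposition~\ref{SNFmodel2} (all $1$'s in the first row, identity below): writing $U\y = D\,V^{-1}\ve z$ directly yields the system $y_1+\cdots+y_{S^2}=(T-1)\bar z_1$, $y_i=\bar z_i$ for $i\ge 2$, from which both implications are read off by solving for integral $\bar{\ve z}$. Your argument instead handles $(\Rightarrow)$ by the elementary column-sum observation (bypassing the SNF entirely for that direction) and handles $(\Leftarrow)$ abstractly, using only that the product of the invariant factors equals $T-1$ to compute $[\Z^{S^2}:\Z\A{2}]$ and then matching indices with $L$. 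The paper's approach is more constructive and is precisely why the authors insisted on the particular form of $U$; yours is cleaner in that it needs only the invariant factors of $\A{2}$, not the specific unimodular transformations, and would transfer verbatim to any matrix with constant column sum $T-1$ and Smith form $\diag(1,\dots,1,T-1)$.
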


\begin{proof}
Let $D$ be the Smith normal form of $\A{2}$, and $U,V$ unimodular matrices such that $U\A{2} V=D$, as in the proof of Proposition \ref{SNFmodel2}.  For $\ve y \in \Z \A{2}$, we write $\ve y = \A{2} \ve z$ for some $\ve z \in
\Z^{S^T}$.  Then $\ve y = U^{-1} D V^{-1}\ve z$.  Since $V^{-1}$ is
unimodular 
\[
\{V^{-1}\ve z| \ve z \in \Z^{S^T} \} = \Z^{S^T}.
\]
Write $\bar{\ve z} := V^{-1}\ve z = (\bar{z}_1, \ldots ,
\bar{z}_{S^T})^\top$.  Then,
\[
U \ve y = D \bar{\ve z} = \left( \begin{array}{c}
(T-1) \bar{z}_1\\
\bar{z}_2\\
\vdots\\
\bar{z}_{S^2}\\
\end{array}\right) \, .
\]

Furthermore, we have the following integer system of equations
\begin{equation}\label{eq_1}
\begin{array}{rcl}
y_1 + y_2 + \cdots + y_{S^2} & = &(T-1) \bar{z}_1\\ 
y_2 & = & \bar{z}_2\\
& \vdots & \\
y_{S^2} & = & \bar{z}_{S^T}.\\
\end{array}
\end{equation}
The result follows from this system of equations.
\end{proof}

We now prove an analogous result for Model $\A{4}$. For this, we will distinguish some pairs of paths $(P,Q)$ whose transition count is the same except for one, as explained in the Remark \ref{rem:sufficient}

\begin{defn}\label{def:pivotpaths}
Let $S \geq 3$, $T \geq 4$, and $i,j,k \in [S]$ be three states which are pairwise distinct. Define the pairs of paths

\begin{equation}\label{path1}
    \left(P^{ji}_{ki},Q^{ji}_{ki}\right)
    := \left\{ \begin{array}{rl}
        \big(\{ (ijij\cdots jik)\},\{(ikijij\cdots ijij)\}\big) & \text{ if  } T \text{ even}\\
        \big(\{ (ikjij\cdots jik)\},\{(ikikjij\cdots jiji)\}\big) & \text{ if  } T \text{ odd}\\
    \end{array}\right.
\end{equation}

\begin{equation}\label{path2}
    \left(P^{ki}_{kj},Q^{ki}_{kj}\right)
    := \left\{ \begin{array}{rl}
        \big(\{ (kijij\cdots ijij)\},\{(kjiji \cdots ijij)\}\big) & \text{ if  } T \text{ even}\\
        \big(\{ (kikjij\cdots jiji)\},\{(kjikji \cdots ijij)\}\big) & \text{ if  } T \text{ odd}\\
    \end{array}\right.
\end{equation}
\end{defn}


\begin{ex}
Conforming with Definition \ref{def:pivotpaths}, for $S\geq 3$ and $i=2,\, j=1,\, k=3$; we have:
\begin{itemize}
\item When $T=6$; then, $P^{12}_{32} = 212123$, $Q^{12}_{32}=232121$ according to \eqref{path1} and $P^{32}_{31} = 321212,\, Q^{32}_{31} = 312121$ according to \eqref{path2}.
\item For $T=7$; then, $P^{12}_{32} = 2312123$, $Q^{12}_{32} = 2323121$ according to \eqref{path1} and $P^{32}_{31} = 3231212,\, Q^{32}_{31} = 3123121$ according to \eqref{path2}.
\end{itemize}
\end{ex}

\begin{rem}\label{rem:sufficient}
Let $\mathcal A$ be any of the design matrices for Models \eqref{model3} and \eqref{model4}. Recall that if $ w$ is a path, then $\mathcal A(w)$ denotes the column of $\mathcal A$ indicated by $w$. We then observe the following:
\begin{enumerate}
\item If $P^{ji}_{ki}$ and $Q^{ji}_{ki}$ are as in \eqref{path1}, and we let $v = \mathcal A (P^{ji}_{ki}) - \mathcal A (Q^{ji}_{ki})$; then,  $v$ satisfies $v_{ji} = 1$, $v_{ki} = -1$ and $v_{st} = 0$ for all other $s,t \in [S]$.  
\item If $P^{ki}_{kj}$ and $Q^{ki}_{kj}$ are as in \eqref{path2}, and we let $ v = \mathcal A (P^{ki}_{kj}) - \mathcal A (Q^{ki}_{kj})$; then, $ v$ satisfies $v_{ki} = 1$, $v_{kj} = -1$ and $v_{st} = 0$ for all other $s,t \in [S]$.  
\end{enumerate}
This is how our notation indicates, for instance, where are the two nonzero entries of the difference $\cA(P^{ji}_{ki}) - \cA(Q^{ji}_{ki})$; even more, it indicates that the $(ji)$th coordinate of this difference is 1 and the $(ki)$th coordinate is -1.
%
\end{rem}

\begin{proposition}\label{SNFmodel4}
For $S \geq 3$ and $T \geq 4$, the Smith normal form of the design matrix $\A{4}_{S,T}$ is $D=\diag(1,\ldots,1,T-1)$.
\end{proposition}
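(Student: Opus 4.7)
The plan is to mimic the strategy behind Proposition \ref{SNFmodel2} and its companion Lemma \ref{colspanModel2}: characterize the integer column lattice
\[
\Z\A{4} := \Big\{\sum_{\w\in\wordsnl}\alpha_\w\A{4}_\w \ \Big|\ \alpha_\w\in\Z\Big\},
\]
and read off the Smith normal form from the quotient $\Z^{S(S-1)}/\Z\A{4}$. I expect to prove that
\[
\Z\A{4}=\Big\{y\in\Z^{S(S-1)}\ \Big|\ \sum_{a\neq b}y_{ab}\equiv 0 \pmod{T-1}\Big\},
\]
whence the cokernel is cyclic of order $T-1$ and, once $\A{4}$ is confirmed to have full row rank $S(S-1)$, the Smith normal form is forced to be $\diag(1,\dots,1,T-1)$.

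For the ``$\subseteq$'' containment the point is simply that every column of $\A{4}$ sums to $T-1$, since any word in $\wordsnl$ has exactly $T-1$ transitions. For the reverse containment the key input is Remark \ref{rem:sufficient}: for every triple of pairwise distinct states $i,j,k\in[S]$, both elementary differences
\[
e_{ji}-e_{ki}=\A{4}(P^{ji}_{ki})-\A{4}(Q^{ji}_{ki}),\qquad e_{ki}-e_{kj}=\A{4}(P^{ki}_{kj})-\A{4}(Q^{ki}_{kj})
\]
lie in $\Z\A{4}$, where $e_{ab}$ denotes the standard basis vector of $\Z^{S(S-1)}$ indexed by the pair $(a,b)$. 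I would then argue that, for $S\ge 3$, these Type~I and Type~II differences suffice to connect every pair $(a,b)\in[S]^2\setminus\{(i,i)\mid i\in[S]\}$ to every other such pair: two pairs sharing a first coordinate are joined in one step by a Type~II difference, two pairs sharing a second coordinate by a Type~I difference, and a pair $(a,b),(a',b')$ with $a\neq a'$ and $b\neq b'$ is routed through the intermediate $(a,b')$ or $(a',b)$ whenever one of these is a valid index. The only slightly delicate case is that of ``transposed'' pairs $\{(a,b),(b,a)\}$, which can be joined in two steps by going through a third auxiliary state $c\in[S]\setminus\{a,b\}$, available because $S\ge 3$. This shows $e_{ab}-e_{a'b'}\in\Z\A{4}$ for every admissible pair of indices; combining this with any single column of $\A{4}$ (whose coordinate sum is exactly $T-1$) then yields $(T-1)e_{ab}\in\Z\A{4}$, and hence the ``$\supseteq$'' containment.

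With the lattice equality in hand, the Smith normal form follows mechanically. The cokernel $\Z^{S(S-1)}/\Z\A{4}$ is finite, so $\A{4}$ has full row rank $S(S-1)$; hence its Smith normal form is $\diag(d_1,\dots,d_{S(S-1)})$ with $d_1\mid\cdots\mid d_{S(S-1)}$ and $\prod_i d_i=T-1$; and since the quotient is cyclic (any $e_{ab}$ generates it modulo $\Z\A{4}$) we must have $d_1=\cdots=d_{S(S-1)-1}=1$ and $d_{S(S-1)}=T-1$.

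The main obstacle is the connectedness step for the Type~I / Type~II ``exchange graph'' on $[S]^2\setminus\{(i,i)\mid i\in[S]\}$. For $S\ge 4$ the routing through any unused state is immediate, but for $S=3$ one must verify by a short case analysis that the six admissible pairs are indeed linked, and in particular that transposed pairs can be joined in two steps via the third state. This is precisely where the hypotheses $S\ge 3$ and $T\ge 4$ (needed so that the pivot paths of Definition \ref{def:pivotpaths} actually lie in $\wordsnl$) are both used.
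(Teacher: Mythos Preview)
Your proposal is correct and rests on the same key ingredient as the paper's proof: the pivot paths of Definition~\ref{def:pivotpaths} and Remark~\ref{rem:sufficient}, which place the differences $e_{ji}-e_{ki}$ and $e_{ki}-e_{kj}$ in $\Z\A{4}$. Your connectedness argument on the ``exchange graph'' is exactly what the paper carries out, in slightly different dress, via its four cases (each case connects a given index $(i,j)$ to the fixed index $(1,2)$ rather than to an arbitrary second index, but that is of course equivalent).

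Where you diverge is in the packaging of the conclusion. The paper performs explicit column and row operations to exhibit unimodular $U,V$ with $U\A{4}V$ in the desired diagonal form, and only afterwards deduces the lattice description (Lemma~\ref{lem:latticeB}) from the concrete shape of $U$. You reverse this: you prove the lattice description $\Z\A{4}=\{y:\sum y_{ab}\equiv 0\pmod{T-1}\}$ directly, and then read off the Smith normal form from the cyclic cokernel $\Z^{S(S-1)}/\Z\A{4}\cong\Z/(T-1)\Z$ via the structure theorem. Your route is cleaner and more conceptual---it avoids all matrix bookkeeping and yields Lemma~\ref{lem:latticeB} for free---while the paper's route has the minor advantage of producing the transformation matrices $U,V$ explicitly (which it then never uses beyond the shape of $U$). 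Both are complete; yours is arguably the more natural order of exposition.
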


\begin{proof}
First we show that for all $i,j \in [S]$ such that $\{i,j\} \neq \{1,2\}$, there exists a path $w \in [S]^T_{NL}$ for which the column $\A{4}(w)$, using only column operations, can be brought to the form $v$ where $v\in \Z^{S(S-1)}$, with $v_{12}=1$, $v_{ij} = -1$, and $v_{st} = 0$ for all other $s,t \in [S]$; we call the path $w$ a \emph{pivot path}. We will study four cases for the vector $v$, depending on the states $i,j\in[S]$ for which $v_{ij}=-1$.

\begin{enumerate}
\item Case $i=1, \, j\geq 2$: Let the pivot path $w$ be $P^{12}_{1j}$ from \eqref{path2} of Definition \ref{def:pivotpaths}. Then, as indicated in Remark \ref{rem:sufficient}, $v=\A{4}(P^{12}_{1j}) - \A{4}(Q^{12}_{1j})$ satisfies $v_{12}=1$, $v_{1j}=-1$, and $v_{st}=0$ for all other $s,t\in [S]$.\\

\item Case $i\geq 2, \, j=1$: Let $k\in [S]$ satisfying $k\neq 1$ and $k\neq i$. In this case, let the pivot path be $w=P^{ki}_{k1}$ from \eqref{path2} of Definition \ref{def:pivotpaths}. Then, 
$v=\A{4}(P^{ki}_{k1}) + \A{4}(P^{1k}_{ki}) + \A{4}(P^{12}_{1k}) - \A{4}(Q^{ki}_{k1}) - \A{4}(Q^{1k}_{ki}) - \A{4}(Q^{12}_{1k})$ satisfies $v_{12}=1$, $v_{i1}=-1$, and $v_{st}=0$ otherwise.\\

\item Case $i\geq 3,\, j=2$: Let the pivot path be $w=P^{12}_{i2}$ from \eqref{path1} of Definition \ref{def:pivotpaths}. Then, $v=\A{4}(P^{12}_{i2})-\A{4}(Q^{12}_{i2})$ satisfies $v_{12}=1$, $v_{i2}=-1$, and $v_{st}=0$ for all other $s,t\in [S]$.\\

\item Case $i\geq 2,\, j\geq 3$: In this case we cover all other cases not covered before. For this, we let the pivot path be $w=P^{1j}_{ij}$ from \eqref{path1} of Definition \ref{def:pivotpaths}. Then, $v=\A{4}(P^{1j}_{ij})+\A{4}(P^{12}_{1j})-\A{4}(Q^{1j}_{ij})-\A{4}(Q^{12}_{1j})$ satisfies $v_{12}=1$, $v_{ij}=-1$, and $v_{st}=0$ otherwise.
\end{enumerate}

There are $S(S-1)-1$ pivot paths listed above; ordering these columns to be first in $\A{4}$ and using the column operations from above, the design matrix $\A{4}$ can be brought into the form
\begin{equation*}
\left(
\begin{array}{rrrrrrrrrr}
 1  & 1  & 1  & \cdots & 1  & 1 \\
 -1 & 0  & 0  & \cdots & 0  & 0 \\
 0  & -1 & 0  & \cdots & 0  & 0 \\
 0  & 0  & -1 & \cdots & 0  & 0 & A_l & A_{l+1} & \cdots & A_{m}\\
\vdots & \vdots & \vdots & \ddots & \vdots & \vdots\\
 0  & 0  & 0  & \cdots & -1  & 0 \\
 0  & 0  & 0  & \cdots & 0  & -1\\
\end{array}
\right)
\end{equation*}
where $A_l,\cdots,A_{m}$ are unmodified columns of $\A{4}$.  The column $A_l$ has non-negative integral entries that sum to $T-1$. By adding integer multiples of the first $\pm 1$ vectors in the left of the modified $\A{4}$ matrix, $A_l$ can be transformed to $(T-1,0,\ldots,0)^\top$. That is, $\A{4}$ can be transformed by only column operations into
\begin{equation*}
\left(
\begin{array}{rrrrrrcrrr}
 1  & 1  & 1  & \cdots & 1  & 1 & T-1\\
 -1 & 0  & 0  & \cdots & 0  & 0 & 0\\
 0  & -1 & 0  & \cdots & 0  & 0 & 0\\
 0  & 0  & -1 & \cdots & 0  & 0 & 0 & A_{l+1} & \cdots & A_{m}\\
\vdots & \vdots & \vdots & \ddots & \vdots & \vdots\\
 0  & 0  & 0  & \cdots & -1  & 0 & 0\\
 0  & 0  & 0  & \cdots & 0  & -1 & 0\\
\end{array}
\right).
\end{equation*}
By adding all the rows, besides the first, of the above matrix to the first row, and subtracting the column $(T-1,0,\ldots,0)^\top$ from all the columns to its right we get
\begin{equation*}
\left(
\begin{array}{rrrrrrcccc}
 0  & 0  & 0  & \cdots & 0  & 0 & T-1 & 0 & \cdots & 0\\
 -1 & 0  & 0  & \cdots & 0  & 0 & 0\\
 0  & -1 & 0  & \cdots & 0  & 0 & 0\\
 0  & 0  & -1 & \cdots & 0  & 0 & 0 & \bar A_{l+1} & \cdots & \bar A_{m}\\
\vdots & \vdots & \vdots & \ddots & \vdots & \vdots\\
 0  & 0  & 0  & \cdots & -1  & 0 & 0\\
 0  & 0  & 0  & \cdots & 0  & -1 & 0\\
\end{array}
\right).
\end{equation*}
These operations can be encoded by unimodular matrices $U$ and $V$:
\[
U \A{4} V' = \left(
\begin{array}{ccc}
T-1 &  0 & 0\\
0 & I_{S(S-1) - 1} & \bar{A}\\
\end{array}
\right) 
\]
where $U$ encode the row operation described above, so it is of the form:
\[
U = \left(
\begin{array}{cccccc}
1 & 1 & 1 &\cdots & 1 & 1\\
0 & 1 & 0 & \cdots & 0 & 0\\
0 & 0 & 0 & \ddots &  0 & 0\\
0 & 0 & 0 & \cdots &  1 & 0\\
0 & 0 & 0 & \cdots &  0 & 1\\
\end{array}
\right) 
\]
The rest of the proof is the same as the one of Proposition~\ref{SNFmodel2}.
\end{proof}


\begin{lem} \label{lem:latticeB}
$\ve y = (y_1, \ldots , y_{S^2})^\top \in \Z \A{4}$ if and only if $y_1 +
\cdots + y_{S^2} \equiv 0 \mod (T-1)$.
\end{lem}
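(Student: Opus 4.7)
The plan is to mimic almost verbatim the proof of Lemma \ref{colspanModel2}, since the essential ingredients are already in place: Proposition \ref{SNFmodel4} gives that the Smith normal form of $\A{4}_{S,T}$ is $D = \diag(T-1,1,\ldots,1)$ (after permuting the diagonal to put the $T-1$ first), together with unimodular matrices $U,V$ such that $U\A{4}V = D$, and moreover the proof of Proposition \ref{SNFmodel4} exhibits $U$ explicitly as the matrix whose first row is $(1,1,\ldots,1)$ and whose other rows are the rows of the identity. This last structural fact is what will let us read the divisibility condition directly off the first coordinate.

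First I would fix $\ve y \in \Z \A{4}$ and write $\ve y = \A{4}\ve z$ for some $\ve z \in \Z^{S(S-1)^{T-1}}$. Substituting $\A{4} = U^{-1} D V^{-1}$ and setting $\bar{\ve z} := V^{-1}\ve z$, which again ranges over all of $\Z^{S(S-1)^{T-1}}$ because $V^{-1}$ is unimodular, I would obtain the system
\begin{equation*}
U\ve y \;=\; D\bar{\ve z} \;=\; \bigl((T-1)\bar z_1,\; \bar z_2,\; \ldots,\; \bar z_{S(S-1)}\bigr)^{\!\top}.
\end{equation*}
Because $U$ has first row $(1,1,\ldots,1)$ and is the identity on the remaining rows, the first coordinate of $U\ve y$ is precisely $y_1 + \cdots + y_{S(S-1)}$, while the remaining coordinates of $U\ve y$ are just $y_2,\ldots,y_{S(S-1)}$. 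Hence the system above reads
\begin{equation*}
\begin{array}{rcl}
y_1 + y_2 + \cdots + y_{S(S-1)} &=& (T-1)\bar z_1,\\
y_i &=& \bar z_i \qquad (i=2,\ldots,S(S-1)).
\end{array}
\end{equation*}
The first line immediately gives the forward implication $y_1+\cdots+y_{S(S-1)}\equiv 0 \pmod{T-1}$.

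For the converse, suppose $y_1+\cdots+y_{S(S-1)} \equiv 0 \pmod{T-1}$. Then I would define $\bar z_1 := (y_1+\cdots+y_{S(S-1)})/(T-1) \in \Z$, $\bar z_i := y_i$ for $i=2,\ldots,S(S-1)$, and $\bar z_i := 0$ for $i > S(S-1)$, which produces an integer vector $\bar{\ve z}$ with $D\bar{\ve z} = U\ve y$. Setting $\ve z := V\bar{\ve z} \in \Z^{S(S-1)^{T-1}}$, we then get $\A{4}\ve z = U^{-1}DV^{-1}V\bar{\ve z} = U^{-1}U\ve y = \ve y$, which shows $\ve y \in \Z\A{4}$.

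The only subtlety, and the only place I expect any real care to be needed, is the bookkeeping of indices: the lemma as stated writes $\ve y = (y_1,\ldots,y_{S^2})^\top$, but $\A{4}$ has exactly $S(S-1)$ rows, so one should read $S(S-1)$ in place of $S^2$ throughout (this is consistent with the analogous Lemma \ref{colspanModel2}, where the row count $S^2$ is genuine). Beyond that typo, the argument is a purely formal consequence of the explicit shape of $U$ produced in the proof of Proposition \ref{SNFmodel4}, so no new computation is required.
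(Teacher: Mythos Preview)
Your proposal is correct and takes essentially the same approach as the paper: the paper's own proof is the single sentence that the unimodular matrix $U$ obtained in Proposition~\ref{SNFmodel4} coincides with that of Proposition~\ref{SNFmodel2}, so the proof of Lemma~\ref{colspanModel2} applies verbatim. You have simply written out those details explicitly (and correctly flagged the $S^2$ versus $S(S-1)$ typo in the statement).
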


\begin{proof}
Since in our construction we get the same unimodular matrix $U$ as in Proposition~\ref{SNFmodel2}, then the same proof of Lemma~\ref{colspanModel2} applies.
\end{proof}


\section{Semigroup}\label{sec:normality}

As studied in the last section, to an integer matrix $A\in\Z^{d\times m}$ we associate an integer lattice $\Z A = \{n_1 A_1 +\cdots + n_m A_m \mid n_i\in \Z\}$. We can also associate the semigroup $\N A := \{n_1 A_1 + \cdots + n_m A_m \mid n_i \in \N\}$. We say that the semigroup $\N A$ is normal when $\ve x \in \N A$ if and only if there exist $\ve y\in \Z^d$ and $\alpha\in \R_{\geq0}^d$ such that $\ve x = A \ve y$ and $\ve x = A \alpha$. See Miller and Sturmfels\cite{MS2005} for more details on normality.

In this section we will discuss the normality/non-normality of the
semigroup generated by the columns of the design matrix for each
model; $\A{1}$, $\A{2}$, $\A{3}$,  $\A{4}$.  

\subsection{Model \eqref{model1}}

\begin{lem}\label{non-normal1}
The semigroup generated by the columns of the
design matrix $\A{1}$ is not normal for $S \geq 3$ and $T \geq 4$.
\end{lem}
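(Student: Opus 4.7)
The plan is to demonstrate non-normality by exhibiting a specific integer vector $\ve b$ that lies in both the real cone $\R_{\geq 0}\A{1}$ and the integer lattice $\Z\A{1}$, yet fails to lie in the semigroup $\N\A{1}$. By Proposition~\ref{fiberstateequivinit}(1), a marginal $\ve b$ belongs to $\N\A{1}$ if and only if the corresponding marked state graph $\overline{G}$ admits a decomposition into directed paths with exactly $T-1$ transitions each, where the marking on each vertex $v$ records the number of paths starting at $v$. Hence the task reduces to finding a marked state graph that admits a fractional path decomposition (placing $\ve b$ in the cone) but no integer one (keeping $\ve b$ out of the semigroup).

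First I would focus on the base case $S=3,\, T=4$. The target is a marked state graph $\overline{G}$ whose total edge count is $3N$ and whose vertex markings sum to $N$ for some small integer $N$, such that no nonnegative integer combination of words of length $4$ realizes $\ve b$ even though a nonnegative rational one does. A natural candidate is built by combining directed cycles on $\{1,2,3\}$ (edges such as $1\to 2,\, 2\to 3,\, 3\to 1$) with additional ``forcing'' structure (extra self-loops or back-edges) and an asymmetric vertex marking, arranged so that every prospective word beginning at a marked vertex is forced to consume a distinguished edge whose multiplicity runs out before all $N$ paths can be produced. Membership in $\Z\A{1}$ follows from the evident linear relations (integer vertex markings, edge-sum equal to $(T-1)N$). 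Membership in the cone is verified by exhibiting explicit nonnegative rationals $\alpha_w \geq 0$ with $\ve b = \sum_w \alpha_w \A{1}_w$. The non-existence of an integer decomposition is established by a finite case analysis on the first transition of every putative word starting at a marked vertex, identifying a step at which no compatible extension of the partial word exists.

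To pass from $S=3,\, T=4$ to general $S\geq 3,\, T\geq 4$, two simple padding arguments suffice. For larger $T$, insert additional $v\to v$ self-loops (at a suitably chosen $v$) into each word of the base hole, stretching it without disturbing the obstruction. For larger $S$, embed $\{1,2,3\}\hookrightarrow [S]$ and let the additional states carry no markings and no edges, so that the extra rows and columns of $\A{1}_{S,T}$ do not interact with the decomposition problem. The main obstacle is the construction of the base hole itself: because the cone is relatively tight, constrained by flow balance at each vertex together with the marginal-sum identity, the combinatorial configuration distinguishing cone from semigroup must be engineered carefully, and the verification that no integer decomposition exists demands a careful but finite enumeration of candidate first transitions at each marked vertex.
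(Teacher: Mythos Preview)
Your plan matches the paper's approach exactly: exhibit an explicit hole $\ve h$ in the saturation for $S=3$, $T=4$, then pad by self-loops for larger $T$ and embed for larger $S$. The one piece you leave open---the actual base hole---is what the paper supplies: with coordinates ordered $1,2,3,(11),(12),(13),(21),(22),(23),(31),(32),(33)$, the vector $\ve h=(1,0,1,1,1,0,0,0,1,0,2,1)^\top$ works, realized in the cone as $\tfrac12\bigl(\A{1}_{(1112)}+\A{1}_{(1232)}+\A{1}_{(3232)}+\A{1}_{(3332)}\bigr)$ and in the lattice as $\A{1}_{(1123)}+\A{1}_{(3332)}+\A{1}_{(3232)}-\A{1}_{(3323)}$; for general $T$ the padded hole is $\ve h=(1,0,1,T{-}3,1,0,0,0,1,0,2,T{-}3)^\top$, obtained by prefixing the four words above with $T-4$ copies of $1$ or $3$ as appropriate.
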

\begin{proof}
First, for $S = 3$ and $T = 4$, we look at the semigroup $\N \A{1}$ generated by the columns of $\A{1}$. We ordered the indices of the coordinates lexicographically, i.e., as $1,\, 2,\,
3,\, (11),\, (12),\, (13), \, (21),\,  (22),\, (23),\,$ $ (31),\, (32),\, (33)$.
We claim that the vector $\ve h:= (1, 0, 1, 1, 1, 0, 0, 0,
1, 0, 2, 1)^\top$ is indeed not in the semigroup $\N\A{1}$, as one can verify that there does not exist an integral
non-negative solution for the system
\[
\A{1} \ve x = \ve h, \, \ve x \geq 0.
\].    However, we have
\[
\ve h = \frac{1}{2} \A{1}_{(1112)} + \frac{1}{2} \A{1}_{(1232)} +
\frac{1}{2} \A{1}_{(3232)} + \frac{1}{2} \A{1}_{(3332)}. 
\]
Thus $\ve h$ is in the saturation of $\N\A{1}$.
Also since 
\[
\ve h =  \A{1}_{(1123)} +  \A{1}_{(3332)} +  \A{1}_{(3232)} - \A{1}_{(3323)},
\]
we know that $\ve h$ is in the lattice generated by the columns of the
matrix $\A{1}$.  Thus $\ve
h$ is in the difference between $\N\A{1}$ and its saturation.

Based on this case, we show now that for $T>4$, there is a vector in the saturation of $\N \A{1}$ but not in
$\N \A{1}$.   Let $\ve h := (1, 0, 1, T-3, 1, 0, 0, 0, 1, 0, 2, T-3)^\top$ and notice that 
\[
\ve h = \frac{1}{2} \A{1}_{(1 \ldots 1112)} + \frac{1}{2} \A{1}_{(1\ldots
  1232)} + \frac{1}{2} \A{1}_{(3\ldots 3232)} +\frac{1}{2} \A{1}_{(3 \ldots 3332)},
\]
where $(1 \ldots 1112)$ is the path with $T - 4$ many 1's in front of
the path $(1112)$, $(1\ldots 1232)$ is  the path with $T - 4$ many 1's in front of
the path $(1232)$, $(3\ldots 3232)$ is the path with $T - 4$ many 3's
in front of the path $(3232)$, and $(3 \ldots 3332)$ is the path with $T - 4$ many 3's
in front of the path $(3332)$.
Thus $\ve h$ is in the saturation of $\N\A{1}$.  Also 
\[
\begin{array}{lll}
\ve h 
& = & (1,0,1,T-3,1,0,0,0,1,0,2,T-3)^\top\\
& = & \A{1}_{(1\ldots 1123)} +  \A{1}_{(3\dots 3332)} +  \A{1}_{(3\ldots
  3232)} - \A{1}_{(3\ldots 3323)}\\
&= & (1,0,0,T-3,1, 0, 0, 0, 1, 0, 0, 0)^\top\\
&+ & (0,0,1,0,0,0,0,0,0,0,1,T-2)^\top\\
&+ & (0,0,1,0,0,0,0,0,1,0,2,T-4)^\top\\
&- & (0,0,1,0,0,0,0,0,1,0,1,T-3)^\top.
\end{array}
\] 
Thus $\ve h$ is in the integer lattice $\Z\A{1}$
but there does not exist an integral
non-negative solution for the system
\[
\A{1} \ve x = \ve h, \, \ve x \geq 0.
\]
Thus $\ve h$ is not in $\N\A{1}$.

For $S>3$, we just observe that $\Z\A{1}_{S,T}\subseteq \Z\A{1}_{S+1,T}$.
\end{proof}

When $S=2$, the semigroup $\N\A{1}$ seem to be normal, as stated in the following conjecture. \begin{conj}\label{conj:normal1}
The semigroup generated by the columns of the
design matrix $\A{1}$ is normal for $S = 2$ and $T \geq 2$.
\end{conj}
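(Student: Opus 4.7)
The plan is to reduce normality of $\N\A{1}_{2,T}$ to a combinatorial realizability question via the marked state graph framework of Proposition~\ref{fiberstateequivinit}. By that proposition, an element $\ve h \in \N\A{1}_{2,T}$ is precisely the ``signature'' of a marked state graph $\overline{G}$ on the vertex set $\{1,2\}$ arising as a disjoint union of $n_1+n_2$ directed walks of length $T-1$, where $n_i$ walks start at vertex $i$ and the edge multiplicities are $e_{11},e_{12},e_{21},e_{22}$. Consequently, the task is to show that every lattice point of the rational cone $\R_{\geq 0}\A{1}_{2,T}$ arises in this way, i.e.\ can be realized by some multiset of length-$T$ binary paths.

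First, I would compute the Smith Normal Form of $\A{1}_{2,T}$ by adapting the constructive argument of Proposition~\ref{SNFmodel2} to include the two initial-state rows, and derive the explicit congruences describing the lattice $\Z\A{1}_{2,T}$. Second, I would identify the facet inequalities of $\R_{\geq 0}\A{1}_{2,T}$: besides coordinate non-negativity and the column-sum equation $e_{11}+e_{12}+e_{21}+e_{22}=(T-1)(n_1+n_2)$, the crucial constraints are the ``endpoint non-negativity'' inequalities $e_{21}-e_{12}+n_1\geq 0$ and $e_{12}-e_{21}+n_2\geq 0$, which encode that the number of walks terminating at each vertex is non-negative. Third, given a lattice point in this cone, I would explicitly construct a realizing multiset: assign a type $(s,e)\in\{1,2\}^2$ to each of the $n_1+n_2$ paths so that the counts of $1\to 2$ and $2\to 1$ switches balance with the starts and ends, and then distribute the $e_{11}$ and $e_{22}$ self-loops among the blocks of these paths so each attains length exactly $T$.

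The main obstacle will be this last step: ensuring that the self-loops can be apportioned so every one of the $n_1+n_2$ paths simultaneously reaches length $T$ with a non-negative count of self-loops of each color, using exactly $e_{11}$ and $e_{22}$ in total. I expect this to reduce to a small transportation-type feasibility problem whose solvability follows from the cone inequalities together with the Smith Normal Form congruence obtained in step one. The binary-alphabet structure ($S=2$) is what makes this tractable: a word in $\{1,2\}^T$ is determined by its starting state and its block-length sequence, each path's skeleton length is controlled by the number of switches assigned to it, and padding by self-loops is a linear redistribution. A careful case analysis handling the degenerate configurations---in particular when $e_{12}=e_{21}=0$, so no walk ever switches states, or when one of $n_1,n_2$ vanishes---should complete the argument and establish the conjecture.
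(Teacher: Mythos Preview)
The paper does not prove this statement: it is stated as a \emph{conjecture} (Conjecture~\ref{conj:normal1}), and the only evidence the authors offer is computational verification via \texttt{normaliz} for $T=1,\ldots,100$. There is therefore no ``paper's own proof'' to compare your proposal against.

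Your proposal, by contrast, is an outline for an actual proof of the conjecture. The strategy you describe---characterize $\Z\A{1}_{2,T}$ via the Smith Normal Form, describe the facets of the cone, and then for each lattice point in the cone exhibit an explicit decomposition into length-$T$ binary walks by first choosing endpoint types and then distributing self-loops---is a natural and plausible line of attack, and exploits precisely the feature ($S=2$) that makes Lemma~\ref{non-normal1} fail. You are right that the delicate point is the self-loop redistribution step: you must show that, given the balance conditions forced by the cone inequalities and the lattice congruence, the $e_{11}$ and $e_{22}$ loops can be packed into the chosen skeletons so that each path has length exactly $T$. This is a genuine feasibility question (a bounded integer transportation problem) and is not automatic; your sketch does not yet supply the argument that it always succeeds. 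If you can close that gap---likely by an explicit greedy or inductive construction together with a careful treatment of the degenerate cases you list---you would be proving something the paper leaves open, not reproducing anything in it.
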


We verified this conjecture using the software {\tt normaliz}\cite{normaliz} for $T=1,\ldots,100$. 

\subsection{Model \eqref{model2}}

Different from Model \eqref{model1}, the Model \eqref{model2} does not satisfy the normality condition.
\begin{lem}
The semigroup generated by the columns of the
design matrix $\A{2}$ is not normal for $S \geq 2$ and $T \geq 3$.
\end{lem}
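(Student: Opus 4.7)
My plan is to mirror the strategy of Lemma \ref{non-normal1}: exhibit an explicit vector $\ve h \in \Z_{\geq 0}^{S^2}$ that lies in the saturation of $\N\A{2}$ and in the lattice $\Z\A{2}$, but not in $\N\A{2}$ itself.

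For any $S \geq 2$ and $T \geq 3$, I propose the witness $\ve h$ supported on two coordinates: $h_{11} = T-2$, $h_{22} = 1$, and $h_{ij} = 0$ for all other $(i,j) \in [S]^2$. Its coordinate sum is $(T-2)+1 = T-1$, which is divisible by $T-1$, so Lemma \ref{colspanModel2} gives $\ve h \in \Z\A{2}$. Writing $(1\cdots 1)$ and $(2\cdots 2)$ for the two constant paths of length $T$, the identity
\[
\ve h = \frac{T-2}{T-1}\A{2}_{(1\cdots 1)} + \frac{1}{T-1}\A{2}_{(2\cdots 2)}
\]
exhibits $\ve h$ in the cone over the columns, and clearing denominators shows $(T-1)\ve h = (T-2)\A{2}_{(1\cdots 1)} + \A{2}_{(2\cdots 2)} \in \N\A{2}$, so $\ve h$ lies in the saturation.

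The crux of the argument, and the step I would write most carefully, is that $\ve h \notin \N\A{2}$. Since every column of $\A{2}_{S,T}$ has coordinate sum exactly $T-1$, any representation $\ve h = \sum_{\ve w} c_{\ve w} \A{2}_{\ve w}$ with $c_{\ve w} \in \N$ forces $\sum c_{\ve w} = 1$; hence $\ve h$ would have to equal a single column $\A{2}_{\ve w}$. By Proposition \ref{fiberstateequiv}, such a column encodes the state graph of a single path, whereas the state graph described by $\ve h$ has $T-2 \geq 1$ self-loops at vertex $1$ and one self-loop at vertex $2$ with no cross edges, and is therefore disconnected. No single walk of length $T-1$ can trace a disconnected multigraph, so $\ve h \notin \N\A{2}$. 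The argument is uniform in $S$ because the witness uses only the states $1$ and $2$ and only the two constant paths, both of which exist in $\words$ for every $S \geq 2$; the only hypothesis needed to make the disconnection argument work is $T \geq 3$, which ensures $h_{11} \geq 1$ so that the support of $\ve h$ genuinely involves both components of its state graph.
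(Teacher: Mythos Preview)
Your proof is correct and follows essentially the same strategy as the paper: the paper uses the mirror-image witness $h_{11}=1$, $h_{22}=T-2$ and verifies lattice membership via an explicit three-term combination of columns rather than by invoking Lemma~\ref{colspanModel2}. Your column-sum argument forcing $\sum c_{\ve w}=1$, hence reducing to a single column, is a slightly crisper justification for $\ve h \notin \N\A{2}$ than the paper's, but the underlying idea (disconnected state graph cannot come from one path) is identical.
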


\begin{proof}
For $T \geq 3$ and $S = 2$, let $\ve h = (1, 0, 0,  T - 2)^\top$.  
We want to show that $\ve h$ is not in the semigroup $\N\A{2}$ but it is in the saturation of $\N\A{2}$.

Note that $\ve h$ can be written as
\[
\frac{1}{T- 1}  (T - 1, 0, 0, 0)^\top + \frac{T - 2}{T - 1} (0, 0, 0, T - 1)^\top
= \frac{1}{T - 1} \A{2}_{(111\ldots 1)} + \frac{T - 2}{T - 1} \A{2}_{(222\ldots 2)},
\]
where $(111\ldots 1)$ is the path of $T$ many 1's, and $(222\ldots 2)$ is
the path of $T$ many 2's. 
We can also write $\ve h$  as 
\begin{eqnarray*}
(T - 1, 0, 0, 0)^\top - (T - 2, 1, 0, 0)^\top + (0, 1, 0, T - 2)^\top \\
=\A{2}_{(1, 1, \ldots , 1, 1)} - \A{2}_{(1, 1, \ldots, 1, 2)} + \A{2}_{(1, 2, \ldots, 2, 2)}
\end{eqnarray*}
where $(1, 1, \ldots , 1, 1)$ is the path of $T - 1$ many 1's, $(1, 1,
\ldots , 1, 2)$ is the path of $T - 2$ many 1's and one 2, and $(1, 2,
\ldots , 2, 2)$ is the path of $T - 2$ many 2's and one 1.  Thus $\ve
h$ is in the lattice generate by the columns of $\A{2}$.
Thus $\ve h$ is in the saturation of $\N\A{2}$ and in the integer lattice $\Z\A{2}$, but there does not exist an integral
non-negative solution for the system
\[
\A{2} \ve x = \ve h, \, \ve x \geq 0
\]
because $\ve h$ consist of only one transition of the form 11, and $T - 2$ transitions of the form 22. Thus $\ve h$ is not in $\N\A{2}$ but $\ve h$ is a lattice point in the
cone generated by the columns of $\A{2}$. 

For $S>2$, we just set all the transitions involving the state $s>2, s \in
[S]$ to be zero.
\end{proof}

\subsection{Model \eqref{model3}}

For $S = 3$ and for $T = 4, \ldots , 9$, we have computed the Hilbert 
basis for the cone generated by the columns of the design matrix $\A{3}$ over
the lattice generated by the columns of the matrix $\A{3}$ using {\tt normaliz}. The running time of {\tt normaliz} was under two seconds for
all data sets. The most time consuming part in our experiment was generating the design matrices.
It turns out that the set of columns of $\A{3}$ contains the Hilbert basis for
all cases, which implies normality. See Table \ref{tab:hb_fvec_model3} in Section~\ref{sec:computations} for more details. Thus we
have the following conjecture.

\begin{conj}\label{loopless_noinitial_normal1}
For $S = 3$ and for $T \geq 4$, the semigroup generated by the columns
of the design  matrix $\A{3}$ is
normal. 
\end{conj}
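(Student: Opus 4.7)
My plan is to recast the normality claim as a combinatorial decomposition problem via the marked state graph, and then establish decomposability by a flow argument specific to three vertices. By Proposition~\ref{fiberstateequivinit}, an element of $\N\A{3}$ is completely determined by its marked state graph $\overline{G}$, so the conjecture is equivalent to asserting that every marked loopless multigraph on $\{1,2,3\}$ whose data vector lies in $\Z\A{3} \cap \cone(\A{3})$ can be realized by some multiset of loopless paths of length $T$ whose initial vertices match the marks.

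First I would write down the arithmetic conditions characterizing $\cone(\A{3}) \cap \Z^9$. A lattice vector $h = (m_1, m_2, m_3, e_{12}, e_{13}, e_{21}, e_{23}, e_{31}, e_{32})^\top$ lies in the cone iff (i) all entries are non-negative integers, (ii) $\sum_{i \neq j} e_{ij} = (T-1) N$ where $N := m_1 + m_2 + m_3$, and (iii) the quantities $n_v := m_v - \bigl(\sum_j e_{vj} - \sum_i e_{iv}\bigr)$ are non-negative for each $v \in [3]$; these $n_v$ play the role of terminal-state marks. Membership in the lattice $\Z\A{3}$ imposes an additional congruence modulo $T-1$, derivable by an argument parallel to Lemma~\ref{colspanModel2} applied to the Smith normal form of $\A{3}$. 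Verifying that these conditions collectively characterize $\Z\A{3} \cap \cone(\A{3})$ is mostly bookkeeping.

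The combinatorial heart of the proof is: given a loopless multigraph $\overline{G}$ on $\{1,2,3\}$ satisfying (i)--(iii) and the lattice congruence, construct $N$ edge-disjoint walks of length exactly $T-1$, each with a starting vertex prescribed by $m_v$ and an ending vertex prescribed by $n_v$. A standard Eulerian-trail argument applied componentwise produces a preliminary decomposition into walks of total length $N(T-1)$ but individually unequal lengths. To equalize lengths I would employ \textbf{length-balancing swaps}: given a long walk $W_1$ and a short walk $W_2$ sharing a vertex $v$ at positions $i$ and $j$ with $i - j = (T-1) - \ell_2$, where $\ell_2 = |W_2|$, cut both walks at these occurrences of $v$ and splice the halves to obtain two walks of length exactly $T-1$. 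Since there are only three vertices, any walk of length at least $2$ visits at least two of them, so two walks whose combined length is $2(T-1) \geq 6$ necessarily share many common vertices.

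The main obstacle is guaranteeing that a compatible shift $i - j = (T-1) - \ell_2$ with $W_1[i] = W_2[j]$ always exists: the pigeonhole bound only gives a vertex match at some offset, not necessarily the offset one needs. I would resolve this with a secondary \textbf{$2$-cycle splicing} move: if $\overline{G}$ contains an unused pair $i \to j,\ j \to i$, one can lengthen a short walk by inserting the $2$-cycle at any visit to $i$, or shorten a long walk symmetrically. The three-vertex loopless structure guarantees that whenever the naive swap fails, the resulting constraints on edge multiplicities force such a spare $2$-cycle to be present (or the configuration reduces to a small case enumerable by hand). I would finally package the argument as an induction on $T$, with the base cases $T = 4, \ldots, 9$ already verified by the \texttt{normaliz} computations cited in the paper, and the inductive step carried by the combination of length-balancing swaps and $2$-cycle splicings.
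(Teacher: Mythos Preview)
The statement you are trying to prove is listed in the paper as a \emph{conjecture}, not a theorem: the authors give only computational evidence via \texttt{normaliz} for $T=4,\ldots,9$ and explicitly leave the general case open. So there is no proof in the paper to compare your argument against; the relevant question is whether your outline actually closes the gap.

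It does not, and the failure is concrete. Your conditions (i)--(iii) are necessary for membership in $\Cone{3}$ but not sufficient. Take $T=5$ and $h=(1,0,0,\,0,0,0,2,0,2)^\top$ in the coordinates $(1,2,3,12,13,21,23,31,32)$: here $N=1$, $\sum e_{ij}=4=(T-1)N$, and $n_1=1,\ n_2=n_3=0$, so (i)--(iii) all hold. Yet $h\notin\Cone{3}$, because every generator with $m_1=1$ corresponds to a walk starting at vertex~$1$ and hence has $e_{12}+e_{13}\geq 1$, whereas $h$ has $e_{12}+e_{13}=0$; equivalently, $h$ violates the supporting hyperplane $(-3,-2,-2,1,2,1,1,0,0)$ listed in the appendix for $T=5$. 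The appendix in fact shows that $\Cone{3}$ has many facets beyond non-negativity and your degree-balance constraints, so the claim that ``verifying that these conditions collectively characterize $\Z\A{3}\cap\cone(\A{3})$ is mostly bookkeeping'' is simply false. Since the rest of your plan---the length-balancing swaps, the $2$-cycle splicing, and the induction on $T$---is predicated on having a correct and tractable description of the cone, the argument does not get off the ground. Even setting this aside, the assertion that a spare $2$-cycle is always available ``or the configuration reduces to a small case enumerable by hand'' is exactly the kind of step where a normality proof typically lives or dies, and you have not supplied it.
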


\subsection{Model \eqref{model4}}

Similarly, using {\tt normaliz}, we have computed the Hilbert basis for cone generated by $\A{4}$ over the lattice $\Z\A{4}$ for $T = 4, \ldots , 15$. The running time of {\tt normaliz} was again under two seconds for all data sets. 
It turns out that the set of columns of $\A{4}$ contains already the Hilbert basis for
all cases, which implies normality. In Table \ref{tab:hb_fvec_model4} of Section \ref{sec:computations} we present these results, which support the following conjecture. 

\begin{conj}\label{loopless_noinitial_normal2}
For $S = 3$ and for $T \geq 4$, the semigroup generated by the columns
of the design  matrix $\A{4}$ is
normal. 
\end{conj}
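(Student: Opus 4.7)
The plan is to prove $\cone(\A{4}_{3,T})\cap\Z\A{4}_{3,T}\subseteq\N\A{4}_{3,T}$ (the reverse inclusion is automatic) by exploiting the state-graph dictionary of Proposition~\ref{fiberstateequiv}. Under that dictionary, an element of $\N\A{4}_{3,T}$ corresponds exactly to a directed multigraph $G$ on $\{1,2,3\}$ (with no self-loops) whose edge-count vector $\ve b\in\Z^{6}_{\geq 0}$ admits a decomposition into $N:=\|\ve b\|_{1}/(T-1)$ directed walks of length $T-1$. Thus the conjecture reduces to the combinatorial claim: every non-negative integer vector in $\cone(\A{4}_{3,T})$ whose coordinate sum is a multiple of $T-1$---which, by Lemma~\ref{lem:latticeB}, is precisely the description of $\Z\A{4}_{3,T}\cap\R^{6}_{\geq 0}$---is realized by such a decomposition.

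First I would extract an explicit facet description of $\cone(\A{4}_{3,T})$. Because $S=3$ and self-loops are forbidden, each vertex admits only two outgoing transitions, which forces the cone to be highly constrained. I expect the facets to consist of non-negativity together with a small fixed list of ``balance'' inequalities comparing forward and reverse flow across each of the three bipartitions $\{i\}$ vs.\ $\{1,2,3\}\setminus\{i\}$. The bounded $f$-vector result for $\Poly{4}$ (Theorem~1.1) supports this, since it forces a bounded facet complexity for $\cone(\A{4}_{3,T})$ uniformly in $T$, and should allow the facet list to be written down in closed form.

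The main step is then an induction on $N$, carried out by peeling off one walk at a time. For $N=0$ there is nothing to prove; for $N\geq 1$ it suffices to exhibit $w\in\wordsnl$ such that $\ve b':=\ve b-\A{4}_{3,T}(w)$ still lies in $\cone(\A{4}_{3,T})\cap\Z\A{4}_{3,T}$. Since $\|\A{4}_{3,T}(w)\|_{1}=T-1$, the divisibility $\|\ve b'\|_{1}\equiv 0\pmod{T-1}$ is automatic, so only non-negativity and the balance inequalities need to be preserved. At $S=3$ the multigraph $G_{\ve b}$ contains many walks of length $T-1$ whenever $N\geq 1$, since every vertex of positive out-degree has two outgoing options at every step; the pivot paths of Definition~\ref{def:pivotpaths} provide a controlled mechanism for modifying a given walk to match a prescribed flow adjustment along the cut corresponding to any active facet.

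The hard part, and the main obstacle, is exactly this walk-selection step. Deep in the interior of the cone any greedy Eulerian-type construction succeeds, but when $\ve b$ sits on or near a facet, $w$ must be chosen so that its own forward/reverse imbalance across the corresponding cut matches that of $\ve b$ to within $1$; otherwise that balance inequality flips sign after subtraction. I expect this to be resolvable by a Hall-type matching argument combined with local rewriting via the pivot paths of Definition~\ref{def:pivotpaths}, but the case analysis against the finitely many facets is where the real work lies. It is also the step where the hypothesis $S=3$ is essential: for larger $S$ both the number of facets of $\cone(\A{4}_{S,T})$ and the branching structure of admissible walks grow substantially, and a different strategy would be needed.
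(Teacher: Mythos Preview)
The paper does \emph{not} prove this statement; it is explicitly labelled a conjecture, and the only evidence offered is the {\tt normaliz} computation recorded in Table~\ref{tab:hb_fvec_model4} for $T=4,\ldots,15$. So there is no proof to compare your proposal against, and your write-up is not an alternative route to a known argument but an outline toward settling an open question.

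As an outline it is reasonable in spirit but has a concrete gap already at the first step. You anticipate that the facets of $\Cone{4}$ are ``non-negativity together with a small fixed list of balance inequalities comparing forward and reverse flow across each of the three bipartitions.'' The hyperplane data in the Appendix show that this is not the case: beyond non-negativity one gets, for general $T$, eighteen further facets whose coefficients depend on $T$ (e.g.\ for $T=13$ one family has coefficients $(13,13,-11,1,-11,1)$ up to the $S_3$-symmetry), not merely the cut inequalities $|x_{i+}-x_{+i}|\le k$ of Proposition~\ref{prop:bnddeg}. Theorem~\ref{thm:finitevert} does guarantee a bounded \emph{number} of facets, but not that they are $T$-independent or that they take the simple cut form you posit. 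Any peeling argument would have to be checked against this richer, $T$-dependent facet list, which substantially complicates the case analysis you describe.

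Your second step --- selecting a single walk $w$ so that $\ve b-\A{4}(w)$ stays in the cone --- is the heart of the matter, and you correctly flag it as the hard part. Note that Lemma~\ref{lem:integinterior} already says every lattice point of $\Poly{4}$ is a column of $\A{4}$, i.e.\ the $k=1$ slice is fine; the difficulty is genuinely in the inductive step for $k\ge 2$. The pivot paths of Definition~\ref{def:pivotpaths} let you adjust a walk by a single $e_{ab}-e_{cd}$ increment, which is useful, but to make the induction go through you would need to show that for any $\ve b$ on an extremal face one can pick $w$ matching the face's normal direction --- and with the $T$-dependent facets above this is not obviously a finite check. As written, the proposal is a plausible program rather than a proof; to turn it into one you would first need a closed-form description of \emph{all} facets of $\Cone{4}$ for every $T\ge 4$, which the paper does not supply.
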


\section{Polytope Structure}\label{sec:polytope}

We recall some necessary definitions from polyhedral geometry and we refer the reader to the book of Schrijver~\cite{Schrijver1986Theory-of-linea} for more details. The \emph{convex hull} of $\{\ve
a_1,\ldots,\ve a_m\} \subset \R^n$ is defined as 
\begin{equation*}   
\conv (\ve a_1,\ldots,\ve a_m) := \left\{\, \ve x \in \R^n \mid \ve x = \sum_{i=1}^m \lambda_i \ve a_i, \, \, \sum_{i=1}^m \lambda_i = 1, \, \lambda_i \geq 0 \, \right\}.  
\end{equation*} 

A \emph{polytope} $\cP$ is the convex hull
of finitely many points. We say $F \subseteq \cP$ is a {\em face} of the polytope
$\cP$ if there exists a vector $\ve c$ such that $F = \arg \max_{\ve x \in \cP}
\ve c \cdot \ve x$. Every face $F$ of $\cP$ is also a polytope. If the
dimension of $\cP$ is $d$, a face $F$ is a \emph{facet} if it is of dimension
$d-1$. For $k \in \N$, we define the $k$-th dilation of $P$ as $k \cP := \left\{\, k \ve x \mid \ve x \in \cP
,\right\}$. A point $\ve x \in \cP$ is a \emph{vertex} if and only if it can
not be written as a convex combination of points from $\cP \backslash \{\ve x\}$.

The \emph{cone} of $\{\ve a_1,\ldots,\ve a_m\} \subset \R^n$ is defined as 
\begin{equation*}   
\cone (\ve a_1,\ldots,\ve a_m) := \left\{\, \ve x \in \R^n \mid \ve x = \sum_{i=1}^m \lambda_i \ve a_i, \, \lambda_i \geq 0 \, \right\}.  
\end{equation*} 


We are interested in the polytopes given by the convex hull of the columns of
the design matrices of our four models. If $\cA$ is the design matrix for one
of our four models, we will simply use $\cA$ to refer to the set of columns of
the design matrix when the context is clear. Let $\Poly{1} = \conv\left( \A{1}
\right)$, $\Poly{2} = \conv\left( \A{2} \right)$, $\Poly{3} = \conv\left( \A{3}
\right)$, and $\Poly{4} = \conv\left( \A{4} \right)$. Also, we let $\Cone{1} =
\cone\left( \A{1} \right)$, $\Cone{2} = \cone\left( \A{2} \right)$, $\Cone{3} =
\cone\left( \A{3} \right)$, and $\Cone{4} = \cone\left( \A{4} \right)$. 

In this section we will focus mainly on Model \eqref{model4}.  If $\ve x \in
\R^{S(S-1)}$, we index $\ve x$ by $\{\, (i,j) \mid 1 \leq i , j \leq S,\, i \neq j\,
\}$. We define $e_{ij} \in \R^{S(S-1)}$ to be the vector of all zeros, except $1$ at index $ij$.
We also adopt the notation $x_{i+} := \sum_{j} x_{ij}$ and
$x_{+i} := \sum_{j} x_{ji}$. For any $\ve x \in \N^{S(S-1)}$ we can
define a multigraph $G(\ve x)$ on $S$ vertices, where there are $x_{ij}$ directed
edges from vertex $i$ to vertex $j$. One would like to identify the vectors $\ve x\in \N^{S(S-1)}$ for which the graph $G(\ve x)$ is a state graph. Nevertheless, observe that $x_{i+}$ is the out-degree of vertex $i$ and $x_{+i}$ is the in-degree of vertex $i$.

\begin{proposition}\label{prop:bnddeg}
If $\ve x \in \Z \A{4} \cap \Cone{4}$ then $\sum_{i \neq j} x_{ij} =
k(T-1)$ for some $k \in \N$ and $|x_{i+} - x_{+i}| \leq k$ for all
$i \in S$.
\end{proposition}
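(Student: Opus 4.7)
The plan is to exploit two structural facts about the columns of the design matrix $\A{4}$. First, every column of $\A{4}$ has coordinate sum exactly $T-1$, because a path of length $T$ contributes exactly $T-1$ transitions. Second, for any path $w \in \wordsnl$ and any state $i \in [S]$, the out-degree of $i$ in the state graph $G(\{w\})$ differs from its in-degree by $+1$, $0$, or $-1$, according to whether $i$ is the start of $w$ only, neither endpoint (or both), or the end of $w$ only. Thus $|(\A{4}_w)_{i+} - (\A{4}_w)_{+i}| \leq 1$ for every column $\A{4}_w$ and every vertex $i$.

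For the first claim, I would start from $\ve x \in \Cone{4}$ and write
\[
\ve x \;=\; \sum_{w \in \wordsnl} \lambda_w \, \A{4}_w, \qquad \lambda_w \geq 0.
\]
Summing all coordinates and using the column-sum observation gives $\sum_{i\neq j} x_{ij} = (T-1)\sum_w \lambda_w$. Now invoke $\ve x \in \Z\A{4}$: by Lemma~\ref{lem:latticeB}, the total $\sum_{i\neq j} x_{ij}$ is divisible by $T-1$, and since $\Cone{4}$ consists of non-negative combinations of non-negative vectors, this total is a non-negative integer. Hence $\sum_{i\neq j} x_{ij} = k(T-1)$ for some $k \in \N$, and moreover $\sum_w \lambda_w = k$.

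For the degree imbalance, fix $i$ and set $\delta^i_w := (\A{4}_w)_{i+} - (\A{4}_w)_{+i}$, so $\delta^i_w \in \{-1,0,1\}$ by the second structural fact. Then
\[
x_{i+} - x_{+i} \;=\; \sum_{w} \lambda_w \, \delta^i_w,
\]
and the triangle inequality yields
\[
|x_{i+} - x_{+i}| \;\leq\; \sum_{w} \lambda_w \, |\delta^i_w| \;\leq\; \sum_{w} \lambda_w \;=\; k,
\]
as required.

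There is no serious obstacle here; the argument is a clean combination of the column-sum property of $\A{4}$, the elementary observation that a single path has almost-balanced in/out degrees at every vertex, and the divisibility statement from Lemma~\ref{lem:latticeB}. The only subtle point is using the lattice hypothesis $\ve x \in \Z\A{4}$ (not just $\ve x \in \Cone{4}$) to guarantee that the a priori real quantity $\sum_w \lambda_w$ equals the non-negative integer $k$; without this, the triangle-inequality bound would not directly give an integer constant.
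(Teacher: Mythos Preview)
Your proof is correct and follows essentially the same approach as the paper's: write $\ve x$ as a non-negative combination of the columns of $\A{4}$, use the column-sum property together with Lemma~\ref{lem:latticeB} to identify $\sum_w \lambda_w = k \in \N$, and then apply the triangle inequality using the fact that each column satisfies $|(\A{4}_w)_{i+}-(\A{4}_w)_{+i}|\le 1$. The only difference is cosmetic ordering; the paper invokes Lemma~\ref{lem:latticeB} first and then matches it against the $\ell_1$-norm computation, whereas you compute the coordinate sum first and then invoke the lemma.
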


\begin{proof}
By Lemma~\ref{lem:latticeB}, we have $\sum_{i \neq j} x_{ij} = k(T-1)$ for some
$k \in \N$. Let $[A_1 \cdots A_m]$ be the columns of the design matrix $\A{4}$.
Then $\ve x \in \Cone{4}$ implies $\ve x = \sum_{i=1}^m \alpha_i A_i$ where $
 \alpha_i \geq 0$. Then 
\begin{equation*}
\sum_{i \neq j} x_{ij} = \left\| \sum_{i=1}^m \alpha_i A_i \right\|_1 = \sum_{i=1}^m \alpha_i \|A_i\|_1 = (T-1)\sum_{i=1}^m \alpha_i = k(T-1).
\end{equation*}
Thus $\sum_{i=1}^m \alpha_i = k \in \N$. Finally for $i \in S$
\begin{multline*}
| x_{i+} - x_{+i} | = \left|\alpha_1 (A_1)_{i+} + \cdots + \alpha_m (A_m)_{i+} - \alpha_1 (A_1)_{+i} - \cdots - \alpha_m (A_m)_{+i}\right| \\
= \left|\alpha_1 \big[(A_1)_{i+} - (A_1)_{+i}\big] + \cdots + \alpha_m \big[(A_m)_{i+} - (A_m)_{+i}\big] \right| \leq \sum_{i=1}^m \alpha_i = k
\end{multline*}
since $ \left| \big[ (A_l)_{i+} - (A_l)_{+i}\big] \right| \leq 1$ for all $ 1 \leq l \leq m$.
\end{proof}

Proposition \ref{prop:bnddeg} states that for $\ve x \in \Z \A{4} \cap
\Cone{4}$ the multigraph $G(\ve x)$ will have in-degree and out-degree bounded by $\|\ve x\|_1 / (T-1)$ at every vertex. This implies nice properties when $S=3$
and $\|x\|_1 = (T-1)$. Recall a path in a multigraph is \emph{Eulerian} if it
visits every edge only once.

\begin{proposition}
\label{prop:eulpath}
If $G$ is a multigraph on three vertices, with no self-loops, $T-1$ edges, and satisfying
\begin{equation*}
|G_{i+} - G_{+i}| \leq 1 \qquad \, i=1,2,3;
\end{equation*}
then, there exists an Eulerian path in $G$.
\end{proposition}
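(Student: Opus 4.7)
The plan is to invoke the classical characterization of Eulerian paths in directed multigraphs: such a path exists if and only if $G$ is weakly connected on its set of non-isolated vertices, and the signed imbalances $G_{i+}-G_{+i}$ either vanish at every vertex, or consist of a single $+1$, a single $-1$, and zeros elsewhere. I will verify both hypotheses directly for the given $G$.

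For the degree condition, I would first note that $\sum_{i=1}^{3}(G_{i+}-G_{+i})=0$, since each edge contributes exactly $+1$ to one out-degree and $+1$ to one in-degree. Combined with the hypothesis $|G_{i+}-G_{+i}|\leq 1$, this forces the triple $(G_{1+}-G_{+1},\, G_{2+}-G_{+2},\, G_{3+}-G_{+3})$ to be either $(0,0,0)$ or a permutation of $(1,-1,0)$, both of which satisfy the degree condition of the classical theorem.

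For weak connectivity, let $V'\subseteq\{1,2,3\}$ be the set of vertices of nonzero total degree; isolated vertices are irrelevant to any Eulerian traversal. If $|V'|\leq 2$, then all $T-1$ edges lie between two fixed vertices and connectivity is immediate. If $|V'|=3$, every vertex has at least one incident edge; since $G$ has no self-loops, each such edge connects two of the three vertices, and a direct inspection shows that any graph on three vertices in which every vertex is incident to some edge is connected in the underlying undirected sense. Applying the classical Eulerian path theorem then yields the desired path. There is no real obstacle here: the smallness $S=3$ together with the no-self-loop condition trivializes the connectivity analysis, and the degree profile is pinned down automatically by the sum-zero relation and the hypothesis $|G_{i+}-G_{+i}|\leq 1$.
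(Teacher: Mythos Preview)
Your argument is correct. You reduce the problem to the classical characterization of directed Eulerian trails and verify its two hypotheses: the degree-imbalance profile is forced to be $(0,0,0)$ or a permutation of $(1,-1,0)$ by the sum-zero identity together with the bound $|G_{i+}-G_{+i}|\le 1$, and weak connectivity on the non-isolated vertices is automatic because with three vertices and no self-loops any edge incident to a vertex already joins it to one of the other two. (The case $|V'|=1$ you fold into ``$|V'|\le 2$'' is in fact vacuous, since a single non-isolated vertex with no self-loops is impossible; this does not affect the argument.)

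The paper takes a different, more hands-on route: it never invokes the Euler criterion but instead builds the trail explicitly. In the balanced case it observes that the edge set decomposes into $2$-cycles $i\to j\to i$ and $3$-cycles $i\to j\to k\to i$, notes that on three vertices any two such cycles share a vertex, and splices them together. In the unbalanced case (say out-excess at $1$, in-excess at $2$) it first peels off a short path $\rho$ from $1$ to $2$ (either $1\to 2$ or $1\to 3\to 2$), reduces to the balanced case on $G\setminus\rho$, and then attaches $\rho$ to the resulting closed trail. Your approach is shorter and cleaner, leaning on a standard theorem; the paper's approach is self-contained and constructive, and its cycle decomposition viewpoint feeds directly into later arguments about the polytope $P^{(d)}$ (e.g.\ Lemma~\ref{lem:verttwocycle} and Theorem~\ref{thm:finitevert}), where the same $2$-cycle/$3$-cycle bookkeeping reappears.
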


\begin{proof}
First consider the case where $G_{i+} = G_{+i}$ for $i=1,2,3$. Then, since $G$
contains no self-loops, the $T-1$ edges of $G$ consists of disjoint cycles of
the form $i \rightarrow j \rightarrow i$ or $i \rightarrow j \rightarrow k
\rightarrow i$, where $i \neq j$, $ i \neq k$ and $j \neq k$. Since $G$ only
has three vertices, every cycle has a vertex in common. Thus, there is an Eulerian
path that visits each cycle.

Suppose, without loss of generality, that $G_{1+} = G_{+1} + 1$ and $G_{2+} +
1= G_{+2}$. Then, there must exist edge $1 \rightarrow 2$ or the path $1
\rightarrow 3 \rightarrow 2$. This follows since vertex $1$ has more outgoing
edges then incoming, and they must go to either $2$ or $3$. Similarly vertex $2$
has more incoming edges than outgoing, and they must come from either $1$ or
$3$. Let $\rho$ be a path from vertex $1$ to vertex $2$ (either $1 \rightarrow 2$
or $1 \rightarrow 3 \rightarrow 2$).

Let $\widetilde G = G \backslash \rho$, that is, the graph $G$ with the edge(s)
of $\rho$ removed. Note that $\widetilde G_{+1} = \widetilde G_{1+}$,
$\widetilde G_{+2} = \widetilde G_{2+}$, $\widetilde G_{+3} =  G_{+3}$, and
$\widetilde G_{3+} = G_{3+}$. Observe that $\widetilde G_{3+} = \widetilde
G_{+3}$, as otherwise we would have a contradiction since $\widetilde G$ has
one vertex with non-zero in-degree minus out-degree. Now, as in the first case,
$\widetilde G$ consists of disjoint cycles of the form $i \rightarrow j
\rightarrow i$ or $i \rightarrow j \rightarrow k \rightarrow i$, where
$i \neq j$, $ i \neq k$ and $j \neq k$. Thus, there is an Eulerian path on
$\widetilde G$ that visits each cycle of $\widetilde G$, and we can 
append or prepend $\rho$ to get an Eulerian path on $G$.
\end{proof}

\begin{rem}
\label{rem:eulpath}
Note that every word $w \in \MS(\wordsnl)$ gives an Eulerian path in $G(\{w\})$
containing all edges.  Conversely, for every multigraph $G$ with an Eulerian path
containing all edges, there exist $ w \in \MS(\wordsnl)$ such that $G(\{w$\})$
= G$. More specifically, $w$ is the Eulerian path in $G(\{w\})$. See Figure \ref{fig:EulerEx}.
\end{rem}

\begin{figure}
\begin{center}
\scalebox{0.9}{
\includegraphics{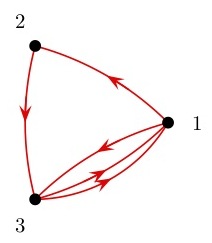}
}
\caption{For word $123131 \in \MS([3]^6_{NL})$, the state graph $G(\{123131\})$
is given above. Note that there are multiple Eulerian paths (words) in the
multigraph above.}
\label{fig:EulerEx}
\end{center}
\end{figure}

\begin{lem}
\label{lem:integinterior}
If $S=3$ and $T \geq 4$, then $\Poly{4} \cap \Z^{S(S-1)} = \A{4}$.
\end{lem}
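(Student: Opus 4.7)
The plan is to combine Lemma~\ref{lem:latticeB}, Proposition~\ref{prop:bnddeg}, Proposition~\ref{prop:eulpath}, and Remark~\ref{rem:eulpath} to show that any lattice point in $\Poly{4}$ arises as the transition count of a single word in $\wordsnl$, and hence coincides with a column of $\A{4}$. The inclusion $\A{4} \subseteq \Poly{4} \cap \Z^{S(S-1)}$ is immediate, so the content is the reverse inclusion.

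First I would fix $\ve x \in \Poly{4} \cap \Z^{S(S-1)}$ and record the basic consequences of membership in $\Poly{4}$. Since every column of $\A{4}$ has coordinate sum $T-1$, so does $\ve x$; moreover $\ve x \geq 0$ because $\Poly{4}$ is the convex hull of non-negative vectors. In particular $\ve x$ is in the cone $\Cone{4}$, and $\sum_{i\neq j} x_{ij} = T-1 \equiv 0 \pmod{T-1}$, so Lemma~\ref{lem:latticeB} gives $\ve x \in \Z\A{4}$.

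Next I would apply Proposition~\ref{prop:bnddeg} to $\ve x \in \Z\A{4} \cap \Cone{4}$ with total mass $T-1$, forcing $k=1$, so $|x_{i+} - x_{+i}| \leq 1$ for every $i$. Specializing to $S=3$, the multigraph $G(\ve x)$ has three vertices, no self-loops (since $\ve x$ is indexed by $[S]^2\setminus\{(i,i)\}$), exactly $T-1$ edges, and satisfies the degree hypothesis of Proposition~\ref{prop:eulpath}. Hence $G(\ve x)$ admits an Eulerian path.

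By Remark~\ref{rem:eulpath}, this Eulerian path is a word $w \in \wordsnl$ with $G(\{w\}) = G(\ve x)$, which precisely means $\A{4}(w) = \ve x$; that is, $\ve x$ is the column of $\A{4}$ indexed by $w$. Thus $\ve x \in \A{4}$, completing the reverse inclusion. The main obstacle, and the place where the restriction $S=3$ is essential, is invoking Proposition~\ref{prop:eulpath}: its conclusion that an Eulerian path exists crucially uses that three vertices leave no room for disjoint cycles to fail to share a common vertex, and it is this step that prevents the lemma from extending verbatim to larger $S$.
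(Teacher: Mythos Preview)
Your proof is correct and follows the same route as the paper's: verify the easy inclusion, then for $\ve x\in\Poly{4}\cap\Z^6$ use that $\|\ve x\|_1=T-1$ and $|x_{i+}-x_{+i}|\le 1$, apply Proposition~\ref{prop:eulpath} to get an Eulerian path in $G(\ve x)$, and invoke Remark~\ref{rem:eulpath} to identify $\ve x$ with a column of $\A{4}$. The paper's proof is terser---it states the degree bound directly rather than routing it through Lemma~\ref{lem:latticeB} and Proposition~\ref{prop:bnddeg}---but the substance is identical.
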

\begin{proof}
Certainly $\Poly{4} \cap \Z^6 \supseteq \A{4}$. Let $\ve x \in \Poly{4} \cap
\Z^6$. Then $\| \ve x \|_1 \equiv 0 \mod T-1$ and $\forall i \in S$ we have
$\left| x_{i+} - x_{+i} \right| \leq 1$. Finally, considering the multigraph
$G(\ve x)$ and Proposition \ref{prop:eulpath}, we see that $\ve x$ is equal to
some column of $\A{4}$.
\end{proof}


As demonstrated in Lemma \ref{lem:integinterior}, we will find it useful to
consider $\ve x \in \N^{S(S-1)}$ as a vector, and also as a
multigraph $G(\ve x)$.

We define
\begin{equation*}
H_k := \left\{ \ve x \in \R^6 \mid \sum_{i \neq j} x_{ij} = k(T-1) \right\}.
\end{equation*}

\begin{proposition}
$\phantom{A}$\\
\begin{enumerate}
\item For $S \geq 3$, $T \geq 4$ and $k \in \N$,
\begin{equation*}
k\Poly{4} = \Cone{4} \cap H_k.
\end{equation*}
\item For $S \geq 3$ and $T \geq 4$,
\begin{equation*}
\Cone{4} \cap \Z \A{4} = \bigoplus_{k=0}^\infty \left( k\Poly{4} \cap \Z^{S(S-1)}\right).
\end{equation*}
\end{enumerate}
\end{proposition}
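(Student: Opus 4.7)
The plan is to exploit the fact that every column of $\A{4}$ has constant $\ell_1$-norm equal to $T-1$, since each word $w \in \wordsnl$ of length $T$ has exactly $T-1$ transitions, each contributing $1$ to a single coordinate of $\A{4}(w)$. Combined with the fact that $\Cone{4}$ lies in the non-negative orthant (all entries of $\A{4}$ are non-negative), this identifies the hyperplane $H_k$ as a level set selecting the ``dilation parameter'' $k$.

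For part (1), I prove the two inclusions separately. For $k\Poly{4} \subseteq \Cone{4} \cap H_k$, any $\ve x \in k\Poly{4}$ can be written as $\ve x = \sum_i (k\lambda_i) A_i$ with $\lambda_i \geq 0$ and $\sum_i \lambda_i = 1$, so $\ve x \in \Cone{4}$; moreover $\sum_{i \neq j} x_{ij} = \|\ve x\|_1 = k\sum_i \lambda_i \|A_i\|_1 = k(T-1)$, placing $\ve x$ in $H_k$. For the reverse, given $\ve x \in \Cone{4} \cap H_k$, write $\ve x = \sum_i \alpha_i A_i$ with $\alpha_i \geq 0$; then $k(T-1) = \|\ve x\|_1 = (T-1)\sum_i \alpha_i$, forcing $\sum_i \alpha_i = k$. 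When $k > 0$, dividing by $k$ exhibits $\ve x / k$ as a convex combination of columns of $\A{4}$, so $\ve x \in k\Poly{4}$; the case $k = 0$ reduces to $\{\ve 0\} = 0 \cdot \Poly{4}$, which follows at once from $\Cone{4}$ lying in the non-negative orthant.

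For part (2), I combine part (1) with Lemma~\ref{lem:latticeB}. The sets $k\Poly{4} \cap \Z^{S(S-1)}$ are pairwise disjoint (except for $\ve 0$) since they lie in distinct hyperplanes $H_k$, which justifies writing the union as $\bigoplus$. If $\ve x$ belongs to the right-hand side, then $\ve x \in k\Poly{4}$ for some $k$, so by part (1) $\ve x \in \Cone{4} \cap H_k$; since $\sum_{i \neq j} x_{ij} = k(T-1) \equiv 0 \pmod{T-1}$ and $\ve x \in \Z^{S(S-1)}$, Lemma~\ref{lem:latticeB} yields $\ve x \in \Z\A{4}$. Conversely, if $\ve x \in \Cone{4} \cap \Z\A{4}$, Lemma~\ref{lem:latticeB} gives $\sum_{i \neq j} x_{ij} = k(T-1)$ for some integer $k$; non-negativity of $\Cone{4}$ forces $k \geq 0$, so $\ve x \in H_k$ and part (1) puts $\ve x$ in $k\Poly{4}$, while $\Z\A{4} \subseteq \Z^{S(S-1)}$ provides integrality.

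The argument is largely bookkeeping, and no genuine technical obstacle appears. The one point requiring care is that both inclusions in part (1) rely essentially on the constant column-sum of $\A{4}$, which decouples the magnitude $\|\ve x\|_1$ from the choice of non-negative representation; this is also the exact bridge to Lemma~\ref{lem:latticeB} in part (2), since the modular condition on $\Z\A{4}$ is nothing other than the congruence $\|\ve x\|_1 \equiv 0 \pmod{T-1}$ that picks out $H_k$.
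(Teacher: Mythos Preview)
Your argument is correct. The paper actually states this proposition without proof, so there is no ``paper's own proof'' to compare against. That said, your approach is exactly in line with the computations the paper carries out in the proof of Proposition~\ref{prop:bnddeg}: there too the key identity is $\sum_{i\neq j} x_{ij} = \|\sum_i \alpha_i A_i\|_1 = (T-1)\sum_i \alpha_i$, which forces $\sum_i \alpha_i = k$ for any $\ve x \in \Cone{4}\cap H_k$, and Lemma~\ref{lem:latticeB} supplies the modular condition picking out $H_k$. Your handling of the boundary case $k=0$ and the disjointness of the $H_k$ (justifying the $\bigoplus$) is clean and completes what the paper leaves implicit.
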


As we will be focusing on Model \eqref{model4} with $S=3$, we give a few
definitions specific to this case. Let $G$ be a directed multigraph on three
vertices with $T-1$ edges and no self-loops. We call a cycle $i \rightarrow j
\rightarrow i$, where $i \neq j$, a \emph{two-cycle}. Similarly we call a cycle
$i \rightarrow j \rightarrow k \rightarrow i$, where $i \neq j$, $i \neq k$,
and $j \neq k$, a \emph{three-cycle}.  We say the two-cycles $i \rightarrow j
\rightarrow i$ and $k \rightarrow l \rightarrow k$ in $G$ have
different \emph{type} if $\{ij\} \neq \{kl\}$. We let $G^2$ be the subgraph of
$G$ consisting of only the two-cycles of $G$. Similarly, we let $G^3$ be the
subgraph of $G$ consisting of only the three-cycles of $G$. By $G \backslash
G^2$, we mean the subgraph of $G$ with the edges in $G^2$ removed. Similarly
for $G \backslash G^3$. We let $|G|$ be the number of edges in $G$. We illustrate this in Figure
\ref{fig:graphdefs}.
\begin{figure}
\begin{center}
\scalebox{0.7}{
\includegraphics{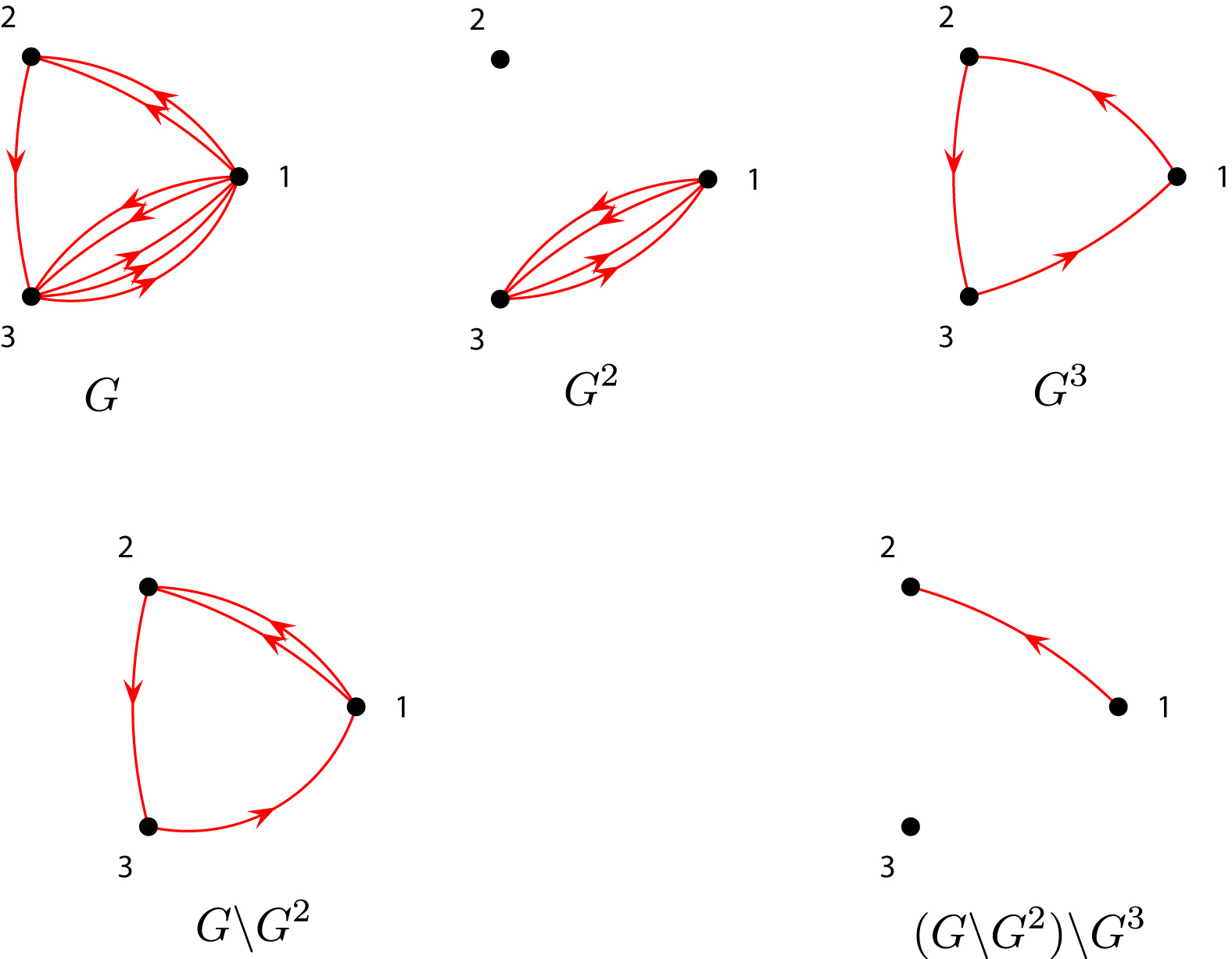}
}
\caption{A graph $G$ (from word $123131312$) and its two-cycles ($G^2$), its three-cycles ($G^3$), $G$ remove its two-cycles ($G\backslash G^2$), and $G$ remove its two-cycles then three-cycles ($(G\backslash G^2)\backslash G^3$).}
\label{fig:graphdefs}
\end{center}
\end{figure}

Let $T \geq 1$ and we define
\begin{align*}
\mathcal G & := \left\{\, G(\{w\}) \mid  \w \in [3]^T_{NL}, G(\{w\}) \text{ has only one type of two-cycle }\,\right\},\\
\mathcal G_{m,n} & := \left\{\, G \in \mathcal G \mid \frac{|G^2|}{2} = m,\, \frac{|(G \backslash G^2)^3|}{3} = n \,\right\}.
\end{align*}

Notice that the graph $G$ in Figure \ref{fig:graphdefs} is in $\mathcal G$
since it has only one type of two-cycle. Moreover, $G$ is contained in $\mathcal G_{2,1}$.

\begin{rem}\label{rem:threecycles4twocycles}
If a multigraph $G$ on three vertices and $T-1$ edges with no self-loops has only
one type of two-cycle, then every three-cycle must have the same orientation.
See Figure \ref{fig:oppthreecycles}.
\end{rem}

\begin{rem}\label{rem:finitesize}
Note that 
\begin{equation*}
| \mathcal G_{m,n} | \leq 18
\end{equation*}
for any $m$,$n$, and $T$. There are $m$ two-cycles of the same type, hence
three ways to place them. The $n$ three-cycles must be the same orientation by
Remark \ref{rem:threecycles4twocycles}, hence two ways to place them. Finally,
the remaining one or two edges must be placed in the same orientation as the
three-cycles, hence three ways to place them.
\end{rem}

\begin{figure}
\begin{center}
\includegraphics{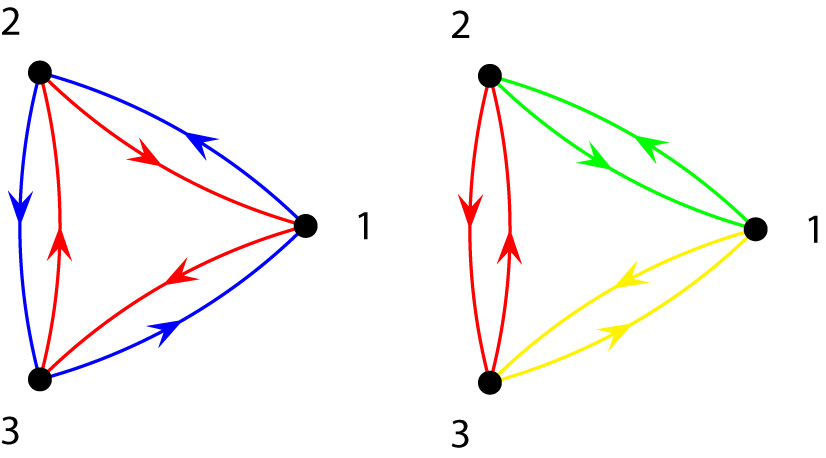}
\caption{The red and blue three-cycles in the multigraph on the left have opposite orientation, implying there are multiple types of two cycles, red, green and yellow in the multigraph on the right. Hence, this is not a vertex of $\Poly{4}$.}
\label{fig:oppthreecycles}
\end{center}
\end{figure}


\begin{lem}
\label{lem:verttwocycle}
Let $S=3$ and $T \geq 1$ and $\ve x \in \Poly{4}$ be a vertex. If $i \rightarrow j \rightarrow i$ and
$k \rightarrow l \rightarrow k$ are two-cycles in $G(\ve x)$, then they consist of the same edges. That
is, $\{il\} = \{kl\}$.
\end{lem}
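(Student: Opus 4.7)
I would argue by contradiction. Suppose $\ve x$ is a vertex of $\Poly{4}$ whose state graph $G(\ve x)$ contains two-cycles of distinct types $\{i,j\}$ and $\{k,l\}$. Since $\Poly{4}=\conv(\A{4})$ is the convex hull of a finite set of points, every vertex must already lie in $\A{4}$, so $\ve x$ is itself the state graph of some word $w\in[3]^T_{NL}$. In particular, $w$ is an Eulerian path in $G(\ve x)$, so $|x_{i+}-x_{+i}|\leq 1$ for each vertex $i$.

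Because there are only three vertices available, two distinct pairs $\{i,j\}$ and $\{k,l\}$ must share exactly one vertex. After relabeling I may assume $\{i,j\}=\{1,2\}$ and $\{k,l\}=\{2,3\}$, so that $x_{12},x_{21},x_{23},x_{32}\geq 1$. The plan is then to exhibit $\ve x$ as a non-trivial midpoint
\[
\ve x=\tfrac12(\ve x+\ve v)+\tfrac12(\ve x-\ve v),\qquad \ve v:=e_{12}+e_{21}-e_{23}-e_{32},
\]
and verify that $\ve x\pm\ve v\in \A{4}$, contradicting that $\ve x$ is extreme.

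The verification splits into three short checks. Non-negativity of $\ve x\pm\ve v$ follows from $x_{12},x_{21},x_{23},x_{32}\geq 1$. The coordinate sum is preserved since $\ve v$ has zero total sum, so $\ve x\pm\ve v$ lies in $H_1$. The crucial observation is that $v_{i+}=v_{+i}$ for every $i$ (a direct computation: these common differences are $1,0,-1$ at vertices $1,2,3$ respectively), so the in-minus-out imbalance at each vertex of $\ve x\pm\ve v$ equals that of $\ve x$, and in particular stays bounded by $1$. Proposition~\ref{prop:eulpath} then produces an Eulerian path in each of the multigraphs $G(\ve x+\ve v)$ and $G(\ve x-\ve v)$, and Remark~\ref{rem:eulpath} realizes each such path as a word in $[3]^T_{NL}$, identifying $\ve x\pm\ve v$ as columns of $\A{4}$.

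The main conceptual obstacle is finding a perturbation $\ve v$ with all the required properties at once: integer, zero coordinate-sum, supported only where $\ve x$ has slack to move in both directions, and (least obviously) preserving the in-minus-out-degree at every vertex so that Eulerian existence transfers to $\ve x\pm\ve v$. The perturbation above is precisely the ``swap'' of one two-cycle between $\{1,2\}$ for one two-cycle between $\{2,3\}$, an operation which is available exactly under the hypothesis of the lemma. For small $T$ in which two disjoint two-cycles cannot coexist (i.e.\ $T\leq 4$), the statement is vacuous.
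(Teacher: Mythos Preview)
Your proof is correct and follows essentially the same approach as the paper: both argue by contradiction, perturb $\ve x$ by swapping one two-cycle for another via $\ve v=e_{ij}+e_{ji}-e_{kl}-e_{lk}$, and write $\ve x=\tfrac12(\ve x+\ve v)+\tfrac12(\ve x-\ve v)$. Your version is in fact more careful than the paper's---you explicitly verify that the perturbation preserves the in/out-degree imbalance at each vertex and then invoke Proposition~\ref{prop:eulpath} and Remark~\ref{rem:eulpath} to land $\ve x\pm\ve v$ back in $\A{4}$, whereas the paper simply asserts this ``since we are only removing and adding two-cycles.''
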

\begin{proof}
We will prove the contrapositive. Suppose $\ve x \in \Poly{4} \cap \Z^6$ where
$i \rightarrow j \rightarrow i$ and $k \rightarrow l \rightarrow k$ are
two-cycles in $G(\ve x)$ such that they do not consist of the same edges. Then
$x_{ij} > 1$, $x_{ji} > 1$, $x_{kl} > 1$ and $x_{kl} > 1$. Moreoever $\{ij\}
\cap \{kl\} = \emptyset$. Let $\ve y = \ve x + \ve e_{kl} + \ve e_{lk} - \ve
e_{ij} - \ve e_{ji}$ and $\ve z =  \ve x + \ve e_{ij} + \ve
e_{ji} - \ve e_{kl} - \ve e_{lk}$. Note that, by Remark
\ref{rem:eulpath}, we must have $\ve y, \ve z \in \Poly{4}$ since
we are only removing and adding two-cycles. Then $\ve x = \frac{1}{2} \ve y + \frac{1}{2} \ve z$.
\end{proof}

Lemma \ref{lem:verttwocycle} can be proved by also considering convex
combinations of multigraphs. See Figure \ref{fig:lincombexone}. 
\begin{figure}
\begin{center}
\scalebox{0.7}{
\includegraphics{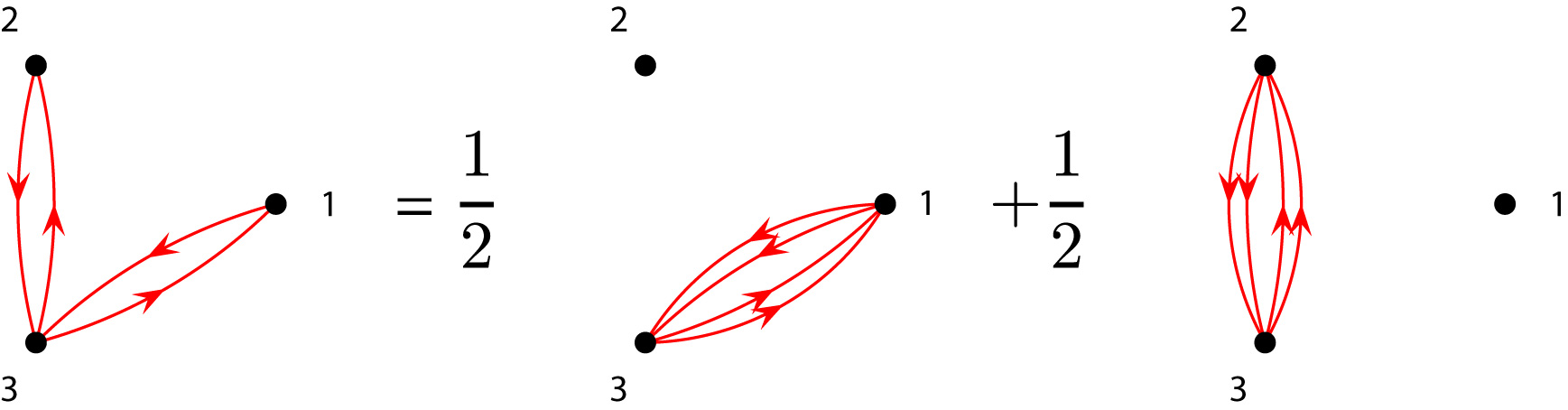}
}
\caption{On the left is the multigraph for $[0 \; 1 \; 0 \; 1 \; 1 \; 1 ]^\top \in \A{4}_{3,5}$
which can be written as a linear combination of vectors $[0 \; 2 \; 0 \; 0 \; 2
\; 0 ]^\top  \in \A{4}_{3,5}$ and $[0 \; 0 \; 0 \; 2 \; 0 \; 2 ]^\top \in \A{4}_{3,5}$.}
\label{fig:lincombexone}
\end{center}
\end{figure}

By definition of the convex hull, the vertices of $\Poly{4}$ will be contained
in the columns of $\A{4}$.  By Lemma \ref{lem:verttwocycle}, for $S=3$, the
vertices of $\Poly{4}$ will be contained in $\mathcal G$, the set of directed
multigraphs on three vertices that have only one type of two-cycle.  It is not
difficult to see that $ \mathcal G = \bigcup_{n,m \in \Z_{\geq 0}} \mathcal
G_{n,m}$.  Note that $\mathcal G_{m,n}$ is non-empty depending on $T$, $m$ and
$n$.

For $t\in\R$, let 
\begin{equation*}
f_T(t) = \begin{cases}
 \left\lfloor\frac{(T-1 - 2t)}{3}\right\rfloor   & \text{if }  0 \leq 2t \leq T-1, \\
  \ \ 0     & \text{otherwise}.
\end{cases}
\end{equation*}
\begin{proposition}
Let $S=3$ and $T \geq 1$. Then $\mathcal G_{m,f(m)} \neq \emptyset$ for $ 0
\leq 2m \leq T-1$, else $\mathcal G_{m,n} = \emptyset$.
\end{proposition}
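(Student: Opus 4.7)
The plan is to prove the biconditional in two directions: construct an explicit graph in $\mathcal{G}_{m,f_T(m)}$ for every admissible $m$, and conversely show that the Eulerian-path requirement forces the ``residual'' edges after removing all two-cycles and three-cycles to number at most two, which pins $n$ down to $f_T(m)$.

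For the non-emptiness direction, I would set $n := f_T(m)$ and $r := (T-1)-2m-3n \in \{0,1,2\}$, and build $G$ by superimposing $m$ copies of the two-cycle $1\to 2\to 1$, $n$ copies of the three-cycle $1\to 2\to 3\to 1$, and $r$ additional edges chosen to preserve the two-cycle type and three-cycle orientation: add nothing if $r=0$, add a single edge $1\to 2$ if $r=1$, and add one edge $1\to 2$ together with one edge $2\to 3$ if $r=2$. A short check of the edge multiplicities verifies that the only two-cycles are the $m$ of type $\{1,2\}$ and the only three-cycles of $G\setminus G^2$ are the $n$ of orientation $1\to 2\to 3\to 1$. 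Since two-cycles and three-cycles are degree-balanced, only the extra $r$ edges contribute to $G_{i+}-G_{+i}$, and in each of the three cases I would check $|G_{i+}-G_{+i}|\leq 1$ for $i=1,2,3$. Proposition \ref{prop:eulpath} then supplies an Eulerian path, and Remark \ref{rem:eulpath} lets me read off a word $w\in[3]^T_{NL}$ with $G(\{w\})=G$, placing $G$ in $\mathcal{G}_{m,f_T(m)}$.

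For the emptiness direction, let $G\in\mathcal{G}_{m,n}$ and set $H := (G\setminus G^2)\setminus(G\setminus G^2)^3$, with $r:=(T-1)-2m-3n$ edges. By construction $H$ contains no two-cycles (since $G\setminus G^2$ has none) and no three-cycles. Because two-cycles and three-cycles are balanced subgraphs, $H_{i+}-H_{+i}=G_{i+}-G_{+i}$ for each vertex, and since $G$ comes from a word, this difference has absolute value at most $1$. Now a multigraph on three vertices with no two-cycles admits edges between any pair in at most one direction, and no three-cycles forces the induced tournament on the three pairs to be acyclic; choosing a total order compatible with this acyclic orientation (WLOG $1<2<3$) and letting $p,q,s$ count the edges $1\to 2$, $1\to 3$, $2\to 3$, the Eulerian bound applied at the extremal vertices yields $p+q\leq 1$ and $q+s\leq 1$, hence $r=p+q+s\leq 2$. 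Combined with $r\geq 0$, this forces $n=\lfloor(T-1-2m)/3\rfloor = f_T(m)$ and $0\leq 2m\leq T-1$.

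The main technical point to verify carefully is that $H$ genuinely has no three-cycles in either orientation. This is where Remark \ref{rem:threecycles4twocycles} enters: the constraint that $G\setminus G^2$ has no two-cycle (in particular $\min(b_{12},b_{21})=0$) rules out the existence of both orientations of three-cycles in $G\setminus G^2$ simultaneously, so removing $(G\setminus G^2)^3$ really leaves a graph with no three-cycles at all. Once this is in hand, the rest of the emptiness argument is a clean degree-counting step.
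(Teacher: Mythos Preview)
The paper states this proposition without proof, so there is no in-paper argument to compare against; your proposal correctly supplies one.

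Both directions are sound. For non-emptiness, the explicit graph with multiplicities $x_{12}=m+n+\mathbf{1}_{r\ge1}$, $x_{21}=m$, $x_{23}=n+\mathbf{1}_{r=2}$, $x_{31}=n$, $x_{13}=x_{32}=0$ visibly has only $\{1,2\}$-type two-cycles (exactly $m$ of them) and exactly $n$ three-cycles after their removal, with all degree imbalances in $\{-1,0,1\}$; Proposition~\ref{prop:eulpath} and Remark~\ref{rem:eulpath} then furnish the word $w$ placing $G$ in $\mathcal G_{m,f_T(m)}$.

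For emptiness, the point you flag at the end is indeed the crux and it goes through: since $G\setminus G^2$ has $\min(b_{ij},b_{ji})=0$ for every pair, three-cycles of opposite orientation cannot coexist there, so removing $(G\setminus G^2)^3$ eliminates \emph{all} three-cycles. The residual $H$ then has an acyclic underlying simple digraph on three vertices; topologically sorting and applying the Eulerian degree bound $|H_{i+}-H_{+i}|\le 1$ at a source and a sink gives $p+q\le1$ and $q+s\le1$, whence $|H|=p+q+s\le 2$. This forces $n=\lfloor (T-1-2m)/3\rfloor=f_T(m)$ and $2m\le T-1$ as claimed.
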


Therefore, if $p = \lfloor (T-1)/2 \rfloor$, we can write $\mathcal G =
\mathcal G_{0,f(0)} \cup \cdots \cup \mathcal G_{p,f(p)} $. The main idea behind
Theorem \ref{thm:finitevert} is that for $S=3$, all graphs (vectors) in
$\mathcal G_{m,n}$ are not vertices for many $(m,n)$.

\begin{thm}
\label{thm:finitevert}
Let $S=3$. The number of vertices of $\Poly{4}$ is bounded by some constant $C$ which does not depend on $T$. 
\end{thm}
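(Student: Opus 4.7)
The plan is to combine the classification of potential vertices from Lemma \ref{lem:verttwocycle} with an explicit convex decomposition to rule out all but finitely many of the cells $\mathcal{G}_{m, f(m)}$. First I would recall that by Lemma \ref{lem:verttwocycle} every vertex of $\Poly{4}$ corresponds to a graph in $\mathcal{G}$, and by the discussion preceding the theorem, $\mathcal{G} = \bigcup_{m=0}^{\lfloor(T-1)/2\rfloor}\mathcal{G}_{m,f(m)}$ with $|\mathcal{G}_{m,n}|\leq 18$ by Remark \ref{rem:finitesize}. Hence it suffices to show that only a $T$-independent number of cells $\mathcal{G}_{m,f(m)}$ can host vertices.

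The core step I would prove is the following: if $m\geq 3$ and $n=f(m)\geq 2$, then no $\ve x$ arising from $\mathcal{G}_{m,n}$ is a vertex of $\Poly{4}$. Fix such an $\ve x$ with two-cycle type $\{i,j\}$ and (by Remark \ref{rem:threecycles4twocycles}) a single three-cycle orientation, say $i\to j\to k\to i$, and write $\ve x = m(e_{ij}+e_{ji}) + n(e_{ij}+e_{jk}+e_{ki}) + \ve r$ where $\ve r$ encodes the at most two extra edges. Consider
\[
\ve c := 2(e_{ij}+e_{jk}+e_{ki}) - 3(e_{ij}+e_{ji}), \qquad \ve y := \ve x + \ve c, \qquad \ve z := \ve x - \ve c.
\]
A direct computation yields $\|\ve y\|_1 = \|\ve z\|_1 = T-1$ (because $2\cdot 3 = 3\cdot 2$), $\ve y \geq 0$ precisely when $m \geq 3$, and $\ve z \geq 0$ precisely when $n \geq 2$. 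The decisive point is that each two-cycle and three-cycle is balanced, so $\ve c$ preserves the in-out degree difference at every vertex; hence the imbalance profiles of $\ve y$ and $\ve z$ match that of $\ve x$, and Proposition \ref{prop:eulpath} produces Eulerian paths on $G(\ve y)$ and $G(\ve z)$. By Remark \ref{rem:eulpath} and Lemma \ref{lem:integinterior} both vectors lie in $\A{4}\subseteq\Poly{4}$, so from $\ve c\neq 0$ and $\ve x = \tfrac{1}{2}(\ve y+\ve z)$ I conclude that $\ve x$ is not a vertex.

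The proof then concludes with a counting argument: the only cells that may still host vertices satisfy $m \leq 2$ or $f(m) \leq 1$. The first condition admits at most three values of $m$; the second requires $T-1-2m < 6$, admitting at most three further values of $m$ near $\lfloor(T-1)/2\rfloor$. Multiplying by the $18$-bound of Remark \ref{rem:finitesize} yields the universal constant $C \leq 108$.

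The main obstacle I anticipate is the Eulerian-path verification in the boundary regimes $m=3$ and $n=2$. When $m=3$, $\ve y$ has no two-cycles but its $n+2\geq 4$ three-cycles still cover every vertex, so $G(\ve y)$ is connected. When $n=2$, $\ve z$ loses all three-cycles, and if $\ve r$ avoids vertex $k$ then $k$ becomes isolated in $G(\ve z)$; this is harmless, since an alternating word over $\{i,j\}$, possibly extended by $\ve r$, still lies in $\wordsnl$, so $\ve z$ remains a genuine column of $\A{4}$. Managing these boundary subcases, together with the interaction of the extras vector $\ve r$ with the non-negativity checks, is where the careful bookkeeping is required.
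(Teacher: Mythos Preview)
Your proposal is correct and follows essentially the same route as the paper's proof: both restrict to the cells $\mathcal G_{m,f(m)}$ via Lemma~\ref{lem:verttwocycle}, then kill the ``middle'' cells by writing $\ve x$ as the midpoint of two points obtained by trading three two-cycles for two three-cycles and vice versa, leaving only the six boundary cells (your $m\le 2$ or $f(m)\le 1$; the paper's $q\in\{0,1,2,p-2,p-1,p\}$). Your vector $\ve c=2(e_{ij}+e_{jk}+e_{ki})-3(e_{ij}+e_{ji})$ is exactly the swap the paper performs graph-theoretically, and your explicit count $C\le 6\cdot 18=108$ together with the Eulerian-path check in the degenerate cases $m=3$, $n=2$ makes precise what the paper leaves implicit.
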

\begin{proof}
Let $T \geq 13$ and $p = \lfloor (T-1)/2 \rfloor$. We have $\mathcal G =
\mathcal \mathcal G_{0,f(0)} \cup \cdots \cup \mathcal G_{p,f(p)}$. We now claim for $ 3
\leq q \leq p-3$, that every graph(vector) in $\mathcal G_{q,f(q)}$ is not a
vertex. Let $\ve x \in \mathcal G_{q,f(q)}$ for $ 3 \leq q \leq p+3$. Note that
$\frac{1}{2}f(q-3) + \frac{1}{2}f(q+3) = f(q)$. Let $\ve y$ be derived from $\ve x$
where two of the three-cycles are removed and three two-cycles are added such
that $\ve y \in \mathcal G_{q-3,f(q-3)}$. Similarly let $\ve z$ be derived from $\ve
x$ where three two-cycles are removed and two of the three-cycles are added
such that $\ve z \in \mathcal G_{q+3,f(q+3)}$. Finally, $\ve x = \frac{1}{2}\ve
y + \frac{1}{2} \ve z$. We illustrate this construction in Figure \ref{fig:thmex}.

Thus, the vertices of $\Poly{4}$ must be contained in $\mathcal G_{0,f(0)}\cup
\mathcal G_{1,f(1)}\cup \mathcal G_{2,f(2)}\cup \mathcal G_{p-2,f(p-2)}\cup \mathcal G_{p-1,f(p-1)}\cup
\mathcal G_{p,f(p)}$, which is bounded for all $T$. Note that each set of graphs
in the above union is finite by definition.
\end{proof}

\begin{figure}
\begin{center}
\scalebox{0.7}{
\includegraphics{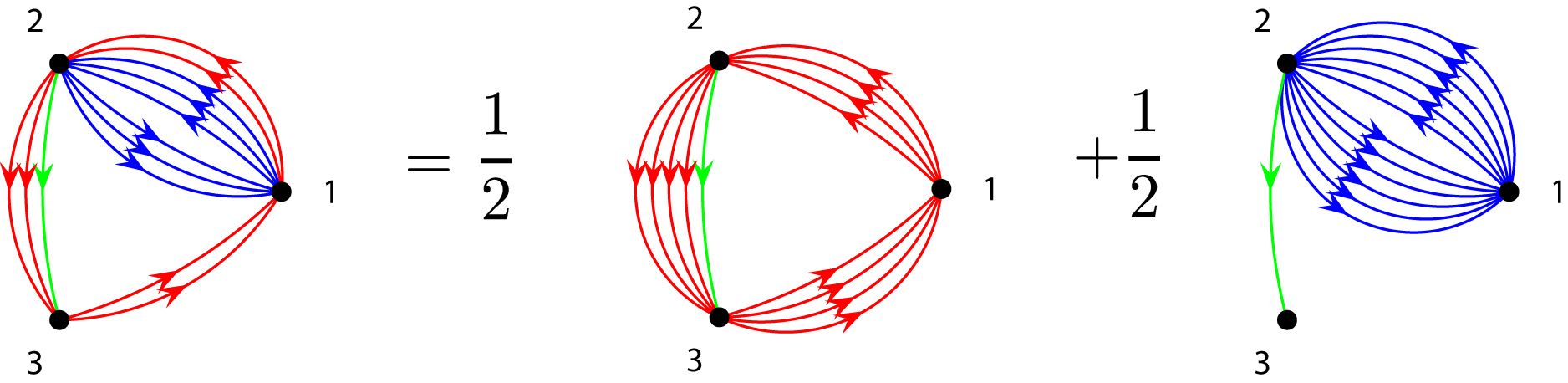}
}
\caption{The graph $G$ on the left (from word 1212121231231) is contained in $\mathcal G_{3,2}$, and by Theorem \ref{thm:finitevert}, can be written as $\frac{1}{2}G(\{23123123123123\}) + \frac{1}{2}G(\{212121212123\})$. Hence, $G$ is not a vertex of $\Poly{4}$.}
\label{fig:thmex}
\end{center}
\end{figure}

\section{Computational Results}\label{sec:computations}
Here we give two tables listing the f-vector and number of Hilbert basis
elements for $S=3$ and Model \eqref{model3} and \eqref{model4}. The Hilbert
basis and supporting hyperplanes were computed with {\tt normaliz}
\cite{normaliz}, and the f-vectors were computed using {\tt Polymake}
\cite{polymake}. For Model \eqref{model3} we computed the f-vector and Hilbert
basis for $S=3$ and $T=1,\ldots,9$, shown in Figure \ref{tab:hb_fvec_model3}. For Model
\eqref{model4} we computed the f-vector and Hilbert basis for $S=3$ and
$T=1,\ldots,15$, shown in Figure \ref{tab:hb_fvec_model4}. 
The supporting hyperplanes of $\Cone{3}$ and $\Cone{4}$ for $S=3$ computed by
{\tt normaliz}\cite{normaliz} are given in the Appendix.

\begin{table}
\begin{tabular}{l|r|rrrrrrr}
T & $\#$HB & $f_0$ & $f_1$ & $f_2$ & $f_3$ & $f_4$ & $f_5$ & $f_6$\\
\hline 
    4    &   24 &    24 &   156 &   434 &   606 &   444 &   162 &    24   \\
    5    &   39 &    36 &   249 &   671 &   891 &   615 &   210 &    30   \\
    6    &   60 &    42 &   276 &   689 &   837 &   528 &   168 &    24   \\
    7    &   87 &    54 &   351 &   860 &  1020 &   633 &   204 &    30   \\
    8    &  120 &    60 &   372 &   851 &   939 &   546 &   168 &    24   \\
    9    &  162 &    72 &   435 &   968 &  1062 &   633 &   204 &    30
\end{tabular}
\caption{
The number of Hilbert basis elements ({\tt normaliz}) and f-vectors ({\tt
Polymake}) for Model \eqref{model3} where $S=3$. The running time of {\tt
normaliz} was under two seconds for all data sets.
}
\label{tab:hb_fvec_model3}
\end{table}

\begin{table}
\begin{tabular}{l|r|rrrrr}
T & $\#$HB & $f_0$ & $f_1$ & $f_2$ & $f_3$ & $f_4$\\
\hline 
   4 & 20  &  20 &   69 &   90 &   51 &   12\\
   5 & 30  &  27 &  114 &  167 &  102 &   24\\
   6 & 48  &  24 &  111 &  176 &  111 &   24\\
   7 & 66  &  41 &  144 &  189 &  108 &   24\\
   8 & 96  &  42 &  171 &  230 &  123 &   24\\
   9 & 123 &  45 &  186 &  245 &  126 &   24\\
  10 & 166 &  56 &  201 &  252 &  129 &   24\\
  11 & 207 &  63 &  216 &  257 &  126 &   24\\
  12 & 264 &  54 &  189 &  236 &  123 &   24\\
  13 & 320 &  77 &  246 &  279 &  132 &   24\\
  14 & 396 &  54 &  189 &  236 &  123 &   24\\
  15 & 468 &  63 &  216 &  257 &  126 &   24
\end{tabular}
\caption{
The number of Hilbert basis elements ({\tt normaliz}) and f-vectors ({\tt Polymake}) for Model \eqref{model4} where $S=3$. The running time of {\tt normaliz} was under two seconds for all data sets.
}
\label{tab:hb_fvec_model4}
\end{table}

All supplementary material can be found at \url{http://www.davidhaws.net/Projects/ToricMarkovChain/}.  
Software to draw state graphs and move graphs can be found at \url{https://github.com/dchaws/DrawStateMoveGraphs}.
Software to generate all words, all words with no self-loops and the design
matrices can be found at \url{https://github.com/dchaws/GenWordsTrans}.

\section{Conclusions and Open Problems}\label{sec:openproblems}

One notices that the set of columns is a graded set since there exists
$\ve w \in \Q^{ S^2}$ such that ${\bf a_i} \cdot \ve w = 1$  by Lemma 4.14
in Sturmfels\cite{sturmfels1996}.

One tool is coming from Theorem 13.14 in Sturmfels\cite{sturmfels1996}.
\begin{thm}[Theorem 13.14 in Sturmfels\cite{sturmfels1996}]\label{normal_thm}
Let $\cA \subset \Z^d$ be a graded set such that the semigroup generated
by the elements in $A$ is normal.  Then the toric ideal $I_A$ associate with
the set $A$ is generated by homogeneous binomials of degree at most $d$.
\end{thm}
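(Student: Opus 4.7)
The plan is to show that any minimal binomial generator of $I_\cA$ has degree at most $d$, exploiting normality of $\N\cA$ as the crucial reduction tool. Since $\cA$ is graded via some $\w \in \Q^d$ (so $\ve a \cdot \w = 1$ for every $\ve a \in \cA$), every binomial in $I_\cA$ is automatically homogeneous, and its degree equals $|u|_1 = |v|_1 = \w \cdot \cA u$ where $x^u - x^v \in I_\cA$. So the task becomes: if $\ve b \in \N\cA$ has $\w\cdot\ve b > d$, then every pair $u, v$ in the fiber $F_{\ve b} = \{u \in \N^{|\cA|} : \cA u = \ve b\}$ can be connected by a sequence of lower-degree moves.

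First I would invoke a Carath\'eodory-type decomposition. Since $\cone(\cA)$ has dimension at most $d$, the real Carath\'eodory theorem says $\ve b$ can be written as a non-negative real combination of at most $d$ elements of $\cA$. Combined with normality of $\N\cA$, if $\w\cdot\ve b > d$ one obtains a strict splitting $\ve b = \ve b_1 + \ve b_2$ with both $\ve b_i \in \N\cA$ of strictly smaller $\w$-degree; indeed, normality ensures that the real decomposition can be refined to an integer one without leaving the semigroup. Given such a splitting, any $u \in F_{\ve b}$ yields corresponding $u_1, u_2 \in \N^{|\cA|}$ with $\cA u_i = \ve b_i$, obtained by choosing an integer point of the polytope $F_{\ve b_1} \cap (u - F_{\ve b_2})$ which is nonempty precisely because of normality.

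Second I would reduce the binomial. Given compatible splittings $u = u_1 + u_2$ and $v = v_1 + v_2$ with $\cA u_i = \cA v_i = \ve b_i$, the identity
\begin{equation*}
x^u - x^v \;=\; (x^{u_1} - x^{v_1})\, x^{u_2} \;+\; x^{v_1}\, (x^{u_2} - x^{v_2})
\end{equation*}
expresses the degree-$n$ binomial as an $\K[x]$-combination of binomials in $I_\cA$ of strictly smaller degree. By descending induction on $\w\cdot\ve b$, every binomial of degree exceeding $d$ lies in the ideal generated by those of degree at most $d$, which yields the theorem.

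The main obstacle is guaranteeing that the splittings of $u$ and $v$ can always be chosen \emph{compatibly} (i.e.\ with matching $\ve b_1, \ve b_2$), which is exactly the content of the integer Carath\'eodory property for normal semigroups. In general this compatibility can fail for an arbitrary decomposition of $\ve b$, and the delicate step is to argue that one may first move $u$ or $v$ within $F_{\ve b}$ using strictly lower-degree moves — supplied by the inductive hypothesis on auxiliary fibers — until a common splitting of $\ve b$ is reached. Managing this interplay between the induction on degree and the existence of compatible integer splittings is where the normality hypothesis is indispensable, and is the technical heart of the argument carried out in Sturmfels\cite{sturmfels1996}.
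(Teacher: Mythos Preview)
The paper does not prove this statement at all: it is quoted verbatim as Theorem~13.14 of Sturmfels and used as a black box. So there is no ``paper's own proof'' to compare your proposal against; the relevant comparison is with Sturmfels' argument.

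Your outline identifies the right reduction --- split $\ve b$ into two semigroup elements of smaller grade and factor the binomial accordingly --- and you are honest that the crux is arranging \emph{compatible} splittings $u=u_1+u_2$, $v=v_1+v_2$ with $\cA u_i=\cA v_i$. But the way you propose to close that gap does not work. You invoke ``the integer Carath\'eodory property for normal semigroups'' as if it were a known fact; it is not. Normal affine semigroups need \emph{not} satisfy integer Carath\'eodory (counterexamples are due to Bruns--Gubeladze and others), so normality alone does not guarantee that an arbitrary $u\in F_{\ve b}$ admits a sub-decomposition matching a prescribed $\ve b=\ve b_1+\ve b_2$. Your claim that $F_{\ve b_1}\cap(u-F_{\ve b_2})$ is nonempty ``precisely because of normality'' is therefore unjustified, and the inductive scheme as written stalls exactly at the step you flag.

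Sturmfels' actual proof takes a different, non-combinatorial route: by Hochster's theorem a normal semigroup algebra is Cohen--Macaulay, and for a graded Cohen--Macaulay quotient of a polynomial ring one bounds the maximal degree of a minimal generator of the defining ideal by the Castelnuovo--Mumford regularity, which in turn is controlled by $d$ via the Ehrhart/Hilbert series of the polytope. So the ``technical heart'' you attribute to Sturmfels is not the compatible-splitting manoeuvre but rather homological machinery that sidesteps it entirely. If you want a purely combinatorial proof along your lines, you would need an additional connectivity argument for fibers that does not rely on integer Carath\'eodory; as stated, the proposal has a genuine gap.
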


Using Theorem \ref{thm:finitevert}, Conjecture
\ref{loopless_noinitial_normal2} and Theorem 
\ref{normal_thm}, one can prove the following theorem:

\begin{conj}
We consider Model \eqref{model4}. Then 
for $S = 3$ and for any $T \geq 4$, a Markov basis for the toric ideal
$I_{\A{4}}$ consists of binomials of degree less than or equal to
$d= 6$.  Moreover, there are only finitely many moves up to a
certain shift equivalence relation. 
\end{conj}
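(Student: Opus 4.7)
The plan is to combine three ingredients already assembled in the paper: the grading of the design matrix $\A{4}$, the normality conjecture for the semigroup $\N\A{4}$ (Conjecture~\ref{loopless_noinitial_normal2}), and the vertex-count bound for the polytope (Theorem~\ref{thm:finitevert}). The degree bound part of the statement follows from the first two via Theorem~\ref{normal_thm}, while the finiteness-up-to-shift part follows from the third.

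First I would verify that $\A{4}$ is a graded set in the sense of Lemma~4.14 of Sturmfels\cite{sturmfels1996}. Each column of $\A{4}_{3,T}$ sums to $T-1$ (this is precisely the hypothesis placed on toric models in the introduction and is used in Proposition~\ref{prop:bnddeg}), so taking $\ve w = \tfrac{1}{T-1}(1,1,\ldots,1)^\top \in \Q^{6}$ yields $\ve a \cdot \ve w = 1$ for every column $\ve a$ of $\A{4}$. Hence $\A{4}$ is graded.

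Next, assuming Conjecture~\ref{loopless_noinitial_normal2}, the semigroup $\N\A{4}$ is normal for $S=3$ and $T\geq 4$. Because $\A{4}\subset \Z^{S(S-1)}=\Z^{6}$, Theorem~\ref{normal_thm} (Sturmfels' Theorem~13.14) applies and produces generators of $I_{\A{4}}$ as homogeneous binomials of degree at most $d=6$. This is exactly the stated degree bound.

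For the finiteness statement, the strategy is to use Theorem~\ref{thm:finitevert} to classify moves by the state graphs of their monomials. A degree-$k$ binomial $\ve u^{+}-\ve u^{-}$ in $I_{\A{4}}$ corresponds to a pair of multisets of paths lying in the same fiber, equivalently (by Proposition~\ref{fiberstateequiv}) to a pair of state graphs with the same edge multiplicities. Since the vertices of $k\Poly{4}$ are bounded in number by some $kC$ independent of $T$ (using that vertex types are indexed by the finitely many classes $\mathcal G_{0,f(0)}, \mathcal G_{1,f(1)}, \mathcal G_{2,f(2)}, \mathcal G_{p-2,f(p-2)}, \mathcal G_{p-1,f(p-1)}, \mathcal G_{p,f(p)}$ from Theorem~\ref{thm:finitevert}), the combinatorial types of moves of degree $\leq 6$ are bounded independently of $T$. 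The shift equivalence relation identifies two moves whenever one is obtained from the other by inserting or removing repeated two-cycles/three-cycles (equivalently, shifting a periodic substring within each constituent path), so each combinatorial vertex type collapses to a single shift-equivalence class, giving only finitely many classes overall.

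The main obstacle is the last step: formalizing the shift equivalence precisely enough that the vertex classification of Theorem~\ref{thm:finitevert} translates into a finite index set for the Markov basis. The degree bound is essentially an immediate consequence of normality plus Sturmfels' theorem; the real work is the combinatorial bookkeeping showing that every degree-$\leq 6$ generator is supported on a window of bounded size and that expanding $T$ only inserts copies of the elementary cycles counted by the parameters $(m,f(m))$, so that distinct generators at different $T$ collapse under shift equivalence.
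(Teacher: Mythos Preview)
Your proposal is correct and follows exactly the route the paper indicates: the paper's entire argument for this conjecture is the single sentence ``Using Theorem~\ref{thm:finitevert}, Conjecture~\ref{loopless_noinitial_normal2} and Theorem~\ref{normal_thm}, one can prove the following,'' and you have unpacked precisely those three ingredients (gradedness via the constant column sum $T-1$, assumed normality, and Sturmfels' bound with $d=S(S-1)=6$), while correctly flagging that the shift-equivalence finiteness claim is the part left informal. The only minor slip is that the vertices of $k\Poly{4}$ number at most $C$, not $kC$, since dilation preserves the vertex set up to scaling; this does not affect your argument.
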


On the experimentations we ran, we found evidence that more should be true.

\begin{conj}\label{conj:MBbound}
Fix $S\geq 3$; then, for every $T\geq 4$, there is a Markov basis for the toric
ideal $I_{\A{4}}$ consisting of binomials of degree at most $S-1$, and there is
a Gr\"obner basis with respect to some term ordering consisting of binomials of
degree at most $S$.
\end{conj}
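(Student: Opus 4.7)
My plan is to reformulate the problem via Proposition \ref{fiberstateequiv}(2): a Markov move of degree $k$ for Model \eqref{model4} corresponds exactly to a pair $(W,\overline W)$ of disjoint multisets of $k$ paths in $\wordsnl$ satisfying $G(W)=G(\overline W)$. The bound on generators of $I_{\A{4}}$ in degree $\leq S-1$ then reduces to showing that every such primitive pair with $k\geq S$ can be written as a sum of smaller primitive pairs; equivalently, the difference $W-\overline W$ lies in the integer span of pairs of size $\leq S-1$. This viewpoint makes the whole question purely combinatorial on the state multigraph.

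The principal tool will be a pigeonhole-style reduction. Given a primitive pair of degree $k\geq S$, I would consider, for each position $t\in\{1,\dots,T\}$, the multiset of states visited at position $t$ by the paths of $W$ and of $\overline W$. Since $G(W)=G(\overline W)$, these two position-$t$ multisets are forced to agree whenever $k$ exceeds the number of available ``configurations''. Using that $|W|=k\geq S$, one can locate a position $t$ and a state $s$ at which at least two paths $w_1,w_2\in W$ and at least two paths $\overline w_1,\overline w_2\in\overline W$ all pass through $s$; swapping suffixes after position $t$ produces two smaller primitive moves on both sides whose state graphs still agree. Iterating this splits any degree-$k$ primitive move into pieces of degree at most $S-1$. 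For the Gr\"obner basis portion, fix a lexicographic term order on $\wordsnl$ compatible with the state-graph filtration and verify that each $S$-pair of two degree-$(S-1)$ generators reduces to a binomial of degree at most $S$; the extra unit of degree accounts for the single path that appears in the overlap of the two leading monomials, echoing the application of Theorem \ref{normal_thm} at the local level.

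The hardest step is the swap lemma. Unlike the pure Eulerian-decomposition setting, where two edge-disjoint decompositions of a multigraph are connected by local swaps of degree two, the paths of $\wordsnl$ are ordered by position, so a swap at state $s$ must occur at a position $t$ that is the \emph{same} in both swapped paths; otherwise the resulting sequences need not be valid paths or need not reassemble into a multigraph equal to $G(W)$. Establishing that such position-aligned swaps always exist for $k\geq S$, uniformly in $T$, is the core combinatorial obstacle. A secondary obstacle is the extension of Theorem \ref{thm:finitevert} from $S=3$ to arbitrary $S$: the three-vertex proof exploited that every cycle is a two-cycle or three-cycle, so for general $S$ one must control cycles of all lengths up to $S$ and prove an analogous uniform bound on $f_0(\Poly{4}_{S,T})$. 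The periodicity of the $f$-vector observed in Section \ref{sec:computations} suggests the bound holds, and together with the normality Conjecture \ref{loopless_noinitial_normal2} it supplies the dimension reduction needed to sharpen the degree bound from $S(S-1)$ down to $S-1$ and $S$.
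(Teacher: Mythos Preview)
The statement you are addressing is Conjecture~\ref{conj:MBbound}, which the paper explicitly leaves open; the only support given is the computational verification in the table of Section~\ref{sec:openproblems}. So there is no ``paper's own proof'' to compare against, and your proposal must be judged on its own merits as an attempted proof of an open conjecture.

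As it stands, the proposal has a genuine gap at the swap lemma, which you yourself flag as the hardest step. Two points in particular do not go through. First, the assertion that for $k\ge S$ the position-$t$ state multisets of $W$ and $\overline W$ are ``forced to agree'' is false: the hypothesis $G(W)=G(\overline W)$ only equates the \emph{edge} multisets $\{(s_t,s_{t+1})\}$ summed over all paths and all $t$, and says nothing about the multiset of states occupied at a fixed position~$t$. (Already for $S=3$, $T=4$, the pair $W=\{1213\}$, $\overline W=\{2131\}$ has $G(W)=G(\overline W)$ but different state multisets at every position.) Second, even when you do find two paths $w_1,w_2\in W$ sharing a state at the same position~$t$, swapping their suffixes produces a new multiset $W'$ with $G(W')=G(W)$, but this does \emph{not} by itself decompose the move $W-\overline W$ into smaller moves: you still have a single degree-$k$ relation $W'-\overline W$. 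To lower the degree you would need one of the swapped paths to coincide with an element of~$\overline W$ so that it cancels, and nothing in the pigeonhole count forces such a coincidence. The Gr\"obner portion (``verify that each $S$-pair reduces to degree at most~$S$'') is not an argument but a restatement of what has to be shown.

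In short, the reformulation via Proposition~\ref{fiberstateequiv} is correct and is indeed the natural combinatorial viewpoint, but the reduction strategy you outline does not yet contain a mechanism that actually decreases degree, and the position-alignment claim on which it rests is incorrect as stated.
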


Despite the computational limitations (the number of generators grows
exponentially when $T$ grows,) we were able to test Conjecture
\ref{conj:MBbound} using {\tt 4ti2}\cite{4ti2} for the following cases:

\begin{table}
\begin{tabular}{c|c|c|c|}
\cline{2-4}
& \multicolumn{3}{|c|}{S} \\ \cline{1-4}
\multicolumn{1}{|c|}{$T=3$} & 4 & 5 & 6 \\ \hline
\multicolumn{1}{|c|}{$T=4$} & 4 & 5 & \\ \hline
\end{tabular}
\caption{
Cases where Conjecture \ref{conj:MBbound} was tested
}
\end{table}

\appendix{Supporting Hyperplanes}
In this appendix, we present the supporting hyperplanes of $\Cone{4}$ for $S=3$ computed by
{\tt normaliz}\cite{normaliz}. Hyperplanes are given by column vectors
$[c_{12} \; c_{13} \; c_{21} \; c_{23} \; c_{31} \; c_{32}]^\top$ where
$c_{12}x_{12} + c_{13}x_{13} + c_{21}x_{21} + c_{23}x_{23} + c_{31}x_{31} +
c_{32}x_{32}  \geq 0$. For all cases computed, the non-negativity constraints
were given as hyperplanes and are not included for brevity.

\begin{gather*}
T=4, \\
\left[
\begin{array}{rrrrrr}
      2   &   2   &   2   &  -1   &  -1   &  -1  \\
     -1   &   2   &   2   &  -1   &  -1   &   2  \\
     -1   &  -1   &  -1   &   2   &   2   &   2  \\
     -1   &  -1   &   2   &  -1   &   2   &   2  \\
      2   &  -1   &  -1   &   2   &   2   &  -1  \\
      2   &   2   &  -1   &   2   &  -1   &  -1
\end{array}
\right]
\end{gather*}

\begin{gather*}
T=5, \\
\left[
\begin{array}{rrrrrr|rrrrrr|rrrrrr}
 5  & 5  & 1  & 1  & -3  & -3  & 1  & 1  & 1  & -1  & -1  & 1  & 3  & 3  & 3  & -1  & -1  & -1\\
 5  & 1  & -3  & 5  & -3  & 1  & 1  & 1  & 1  & -1  & 1  & -1  & -1  & 3  & 3  & -1  & -1  & 3\\
 -3  & -3  & 1  & 1  & 5  & 5  & -1  & -1  & 1  & 1  & 1  & 1  & -1  & -1  & -1  & 3  & 3  & 3\\
 1  & -3  & -3  & 5  & 1  & 5  & -1  & 1  & 1  & 1  & 1  & -1  & -1  & -1  & 3  & -1  & 3  & 3\\
 -3  & 1  & 5  & -3  & 5  & 1  & 1  & -1  & -1  & 1  & 1  & 1  & 3  & -1  & -1  & 3  & 3  & -1\\
 1  & 5  & 5  & -3  & 1  & -3  & 1  & 1  & -1  & 1  & -1  & 1  & 3  & 3  & -1  & 3  & -1  & -1\\
\end{array}
\right]
\end{gather*}

\begin{gather*}
T=6, \\
\left[
\begin{array}{rrrrrr|rrrrrr|rrrrrr}
6  & 6  & 1  & 1  & -4  & -4  & 9  & 4  & 4  & -1  & -1  & -6  & 8  & 3  & 3  & -2  & -2  & -2\\
6  & 1  & -4  & 6  & -4  & 1  & 4  & -1  & 9  & -6  & 4  & -1  & 3  & -2  & 8  & -2  & -2  & 3\\
-4  & -4  & 1  & 1  & 6  & 6  & -6  & -1  & -1  & 4  & 4  & 9  & -2  & -2  & -2  & 8  & 3  & 3\\
1  & -4  & -4  & 6  & 1  & 6  & -1  & -6  & 4  & -1  & 9  & 4  & -2  & -2  & 3  & 3  & -2  & 8\\
-4  & 1  & 6  & -4  & 6  & 1  & -1  & 4  & -6  & 9  & -1  & 4  & -2  & 3  & -2  & 3  & 8  & -2\\
1  & 6  & 6  & -4  & 1  & -4  & 4  & 9  & -1  & 4  & -6  & -1  & 3  & 8  & -2  & -2  & 3  & -2\\
\end{array}
\right]
\end{gather*}

\begin{gather*}
T=7, \\
\left[
\begin{array}{rrrrrr|rrrrrr|rrrrrr}
7  & 7  & 1  & 1  & -5  & -5  & 1  & 1  & 1  & 1  & -1  & -1  & 2  & 2  & 2  & -1  & -1  & -1\\
7  & 1  & -5  & 7  & -5  & 1  & -1  & 1  & 1  & 1  & -1  & 1  & -1  & 2  & 2  & -1  & -1  & 2\\
-5  & -5  & 1  & 1  & 7  & 7  & 1  & -1  & -1  & 1  & 1  & 1  & -1  & -1  & -1  & 2  & 2  & 2\\
1  & -5  & -5  & 7  & 1  & 7  & -1  & -1  & 1  & 1  & 1  & 1  & -1  & -1  & 2  & -1  & 2  & 2\\
-5  & 1  & 7  & -5  & 7  & 1  & 1  & 1  & -1  & -1  & 1  & 1  & 2  & -1  & -1  & 2  & 2  & -1\\
1  & 7  & 7  & -5  & 1  & -5  & 1  & 1  & 1  & -1  & 1  & -1  & 2  & 2  & -1  & 2  & -1  & -1\\
\end{array}
\right]
\end{gather*}

\begin{gather*}
T=8 \\
\left[
\begin{array}{rrrrrr|rrrrrr|rrrrrr}
8  & 8  & 1  & 1  & -6  & -6  & 11  & 4  & 4  & -3  & -3  & -3  & 5  & 5  & 5  & -2  & -2  & -2\\
8  & 1  & -6  & 8  & -6  & 1  & 4  & 11  & -3  & -3  & -3  & 4  & -2  & 5  & 5  & -2  & -2  & 5\\
-6  & -6  & 1  & 1  & 8  & 8  & -3  & -3  & -3  & 11  & 4  & 4  & -2  & -2  & -2  & 5  & 5  & 5\\
1  & -6  & -6  & 8  & 1  & 8  & -3  & 4  & -3  & 4  & -3  & 11  & -2  & -2  & 5  & -2  & 5  & 5\\
-6  & 1  & 8  & -6  & 8  & 1  & -3  & -3  & 4  & 4  & 11  & -3  & 5  & -2  & -2  & 5  & 5  & -2\\
1  & 8  & 8  & -6  & 1  & -6  & 4  & -3  & 11  & -3  & 4  & -3  & 5  & 5  & -2  & 5  & -2  & -2\\
\end{array}
\right]
\end{gather*}

\begin{gather*}
T=9, \\
\left[
\begin{array}{rrrrrr|rrrrrr|rrrrrr}
9  & 9  & 1  & 1  & -7  & -7  & 1  & 1  & 1  & 1  & -1  & -1  & 7  & 3  & 3  & -1  & -1  & -5\\
9  & 1  & -7  & 9  & -7  & 1  & -1  & 1  & 1  & 1  & -1  & 1  & 3  & -1  & 7  & -5  & 3  & -1\\
-7  & -7  & 1  & 1  & 9  & 9  & 1  & -1  & -1  & 1  & 1  & 1  & -5  & -1  & -1  & 3  & 3  & 7\\
1  & -7  & -7  & 9  & 1  & 9  & -1  & -1  & 1  & 1  & 1  & 1  & -1  & -5  & 3  & -1  & 7  & 3\\
-7  & 1  & 9  & -7  & 9  & 1  & 1  & 1  & -1  & -1  & 1  & 1  & -1  & 3  & -5  & 7  & -1  & 3\\
1  & 9  & 9  & -7  & 1  & -7  & 1  & 1  & 1  & -1  & 1  & -1  & 3  & 7  & -1  & 3  & -5  & -1\\
\end{array}
\right]
\end{gather*}

\begin{gather*}
T=10, \\
\left[
\begin{array}{rrrrrr|rrrrrr|rrrrrr}
10  & 10  & 1  & 1  & -8  & -8  & 14  & 5  & 5  & -4  & -4  & -4  & 2  & 2  & 2  & -1  & -1  & -1\\
10  & 1  & 10  & -8  & -8  & 1  & 5  & 14  & -4  & -4  & -4  & 5  & -1  & 2  & 2  & -1  & -1  & 2\\
-8  & -8  & 1  & 1  & 10  & 10  & -4  & -4  & -4  & 14  & 5  & 5  & -1  & -1  & -1  & 2  & 2  & 2\\
1  & -8  & 10  & -8  & 1  & 10  & -4  & 5  & -4  & 5  & -4  & 14  & -1  & -1  & 2  & -1  & 2  & 2\\
-8  & 1  & -8  & 10  & 10  & 1  & -4  & -4  & 5  & 5  & 14  & -4  & 2  & -1  & -1  & 2  & 2  & -1\\
1  & 10  & -8  & 10  & 1  & -8  & 5  & -4  & 14  & -4  & 5  & -4  & 2  & 2  & -1  & 2  & -1  & -1\\
\end{array}
\right]
\end{gather*}

\begin{gather*}
T=11, \\
\left[
\begin{array}{rrrrrr|rrrrrr|rrrrrr}
11  &  11  &  1  &  1  &  -9  &  -9  &  1  &  1  &  1  &  1  &  -1  &  -1  &  7  &  7  &  7  &  -3  &  -3  &  -3\\  
11  &  1  &  11  &  -9  &  -9  &  1  &  -1  &  1  &  1  &  1  &  -1  &  1  &  -3  &  7  &  7  &  -3  &  -3  &  7\\  
-9  &  -9  &  1  &  1  &  11  &  11  &  1  &  -1  &  -1  &  1  &  1  &  1  &  -3  &  -3  &  -3  &  7  &  7  &  7\\  
1  &  -9  &  11  &  -9  &  1  &  11  &  -1  &  -1  &  1  &  1  &  1  &  1  &  -3  &  -3  &  7  &  -3  &  7  &  7\\  
-9  &  1  &  -9  &  11  &  11  &  1  &  1  &  1  &  -1  &  -1  &  1  &  1  &  7  &  -3  &  -3  &  7  &  7  &  -3\\  
1  &  11  &  -9  &  11  &  1  &  -9  &  1  &  1  &  1  &  -1  &  1  &  -1  &  7  &  7  &  -3  &  7  &  -3  &  -3  
\end{array}
\right]
\end{gather*}

\begin{gather*}
T=12, \\
\left[ \begin{array}{rrrrrr|rrrrrr|r}
  12   &  12   &   1   &   1   & -10   & -10   &  17   &   6   &   6   &  -5   &  -5   &  -5   & \cdots \\
  12   &   1   & -10   &  12   & -10   &   1   &   6   &  17   &  -5   &  -5   &  -5   &   6   & \cdots \\
 -10   & -10   &   1   &   1   &  12   &  12   &  -5   &  -5   &  -5   &  17   &   6   &   6   & \cdots \\
   1   & -10   & -10   &  12   &   1   &  12   &  -5   &   6   &  -5   &   6   &  -5   &  17   & \cdots \\
 -10   &   1   &  12   & -10   &  12   &   1   &  -5   &  -5   &   6   &   6   &  17   &  -5   & \cdots \\
   1   &  12   &  12   & -10   &   1   & -10   &   6   &  -5   &  17   &  -5   &   6   &  -5   & \cdots 
\end{array} \right. \\
\left. \qquad \qquad \begin{array}{r|rrrrrr}
\cdots &  19   &   8   &   8   &  -3   &  -3   & -14  \\
\cdots &   8   &  19   &  -3   & -14   &   8   &  -3  \\
\cdots & -14   &  -3   &  -3   &   8   &   8   &  19  \\
\cdots &  -3   &   8   & -14   &  -3   &  19   &   8  \\
\cdots &  -3   & -14   &   8   &  19   &  -3   &   8  \\
\cdots &   8   &  -3   &  19   &   8   & -14   &  -3
\end{array}  \right]
\end{gather*}

\begin{gather*}
T=13, \\
\left[
\begin{array}{rrrrrr|rrrrrr|rrrrrr}
13  &  13  &  1  &  1  &  -11  &  -11  &  1  &  1  &  1  &  1  &  -1  &  -1  &  2  &  2  &  2  &  -1  &  -1  &  -1\\    
13  &  1  &  -11  &  13  &  -11  &  1  &  -1  &  1  &  1  &  1  &  -1  &  1  &  -1  &  2  &  2  &  -1  &  -1  &  2\\    
-11  &  -11  &  1  &  1  &  13  &  13  &  1  &  -1  &  -1  &  1  &  1  &  1  &  -1  &  -1  &  -1  &  2  &  2  &  2\\    
1  &  -11  &  -11  &  13  &  1  &  13  &  -1  &  -1  &  1  &  1  &  1  &  1  &  -1  &  -1  &  2  &  -1  &  2  &  2\\    
-11  &  1  &  13  &  -11  &  13  &  1  &  1  &  1  &  -1  &  -1  &  1  &  1  &  2  &  -1  &  -1  &  2  &  2  &  -1\\    
1  &  13  &  13  &  -11  &  1  &  -11  &  1  &  1  &  1  &  -1  &  1  &  -1  &  2  &  2  &  -1  &  2  &  -1  &  -1    
\end{array}
\right]
\end{gather*}

\begin{gather*}
T=14, \\
\left[
\begin{array}{rrrrrr|rrrrrr|rrrrrr}
14 & 14 & 1 & 1 & -12 & -12 & 20 & 7 & 7 & -6 & -6 & -6 & 9 & 9 & 9 & -4 & -4 & -4\\ 
14 & 1 & -12 & 14 & -12 & 1 & 7 & 20 & -6 & -6 & -6 & 7 & -4 & 9 & 9 & -4 & -4 & 9\\  
-12 & -12 & 1 & 1 & 14 & 14 & -6 & -6 & -6 & 20 & 7 & 7 & -4 & -4 & -4 & 9 & 9 & 9\\  
1 & -12 & -12 & 14 & 1 & 14 & -6 & 7 & -6 & 7 & -6 & 20 & -4 & -4 & 9 & -4 & 9 & 9\\  
-12 & 1 & 14 & -12 & 14 & 1 & -6 & -6 & 7 & 7 & 20 & -6 & 9 & -4 & -4 & 9 & 9 & -4\\  
1 & 14 & 14 & -12 & 1 & -12 & 7 & -6 & 20 & -6 & 7 & -6 & 9 & 9 & -4 & 9 & -4 & -4  
\end{array}
\right]
\end{gather*}

\begin{gather*}
T=15, \\
\left[
\begin{array}{rrrrrr|rrrrrr|rrrrrr}
15 & 15 & 1 & 1 & -13 & -13 & 1 & 1 & 1 & 1 & -1 & -1 & 12 & 5 & 5 & -2 & -2 & -9\\  
15 & 1 & -13 & 15 & -13 & 1 & -1 & 1 & 1 & 1 & -1 & 1 & 5 & 12 & -2 & -9 & 5 & -2\\  
-13 & -13 & 1 & 1 & 15 & 15 & 1 & -1 & -1 & 1 & 1 & 1 & -9 & -2 & -2 & 5 & 5 & 12\\  
1 & -13 & -13 & 15 & 1 & 15 & -1 & -1 & 1 & 1 & 1 & 1 & -2 & 5 & -9 & -2 & 12 & 5\\  
-13 & 1 & 15 & -13 & 15 & 1 & 1 & 1 & -1 & -1 & 1 & 1 & -2 & -9 & 5 & 12 & -2 & 5\\  
1 & 15 & 15 & -13 & 1 & -13 & 1 & 1 & 1 & -1 & 1 & -1 & 5 & -2 & 12 & 5 & -9 & -2  
\end{array}
\right]
\end{gather*}

In what follows, we present the supporting hyperplanes of $\Cone{3}$ for $S=3$ computed by
{\tt normaliz}\cite{normaliz}. Hyperplanes are given by column vectors
$[c_{1} \; c_{2} \; c_{3} \; c_{12} \; c_{13} \; c_{21} \; c_{23} \; c_{31} \;
c_{32}]^\top$ where $c_1x_1 + c_2x_2 + c_3x_3 + c_{12}x_{12} + c_{13}x_{13} +
c_{21}x_{21} + c_{23}x_{23} + c_{31}x_{31} + c_{32}x_{32}  \geq 0$.  For all
cases computed, the non-negativity constraints were given as hyperplanes and
are not included for brevity. The number of hyperplanes alternates between
$22$ and $30$.

\begin{gather*}
T=4, \\
\left[
\begin{array}{rrrrrrrrrrrrrrrr}
 -3  & -2  & -1  & -1  & -1  & -1  &  3  &  3  &  2  &  2  &  2  &  2  &  2  &  1  &  1  &  1 \\ 
 -3  & -2  & -2  & -1  & -1  & -1  &  4  &  3  &  2  &  2  &  2  &  2  &  1  &  2  &  2  &  0 \\ 
 -2  & -1  & -1  & -1  & -1  & -1  &  3  &  3  &  2  &  2  &  2  &  1  &  1  &  2  &  1  &  0 \\ 
  1  &  1  &  0  &  1  &  0  &  0  &  0  & -1  & -1  &  0  &  0  &  0  & -1  &  0  &  0  & -1 \\ 
  2  &  1  &  1  &  1  &  1  &  0  & -1  & -1  & -1  & -1  &  0  & -1  & -1  &  0  &  0  & -1 \\ 
  1  &  1  &  1  &  0  &  1  &  1  & -2  & -1  &  0  & -1  & -1  &  0  &  0  & -1  & -1  &  1 \\ 
  2  &  1  &  1  &  1  &  1  &  1  & -2  & -1  & -1  & -1  & -1  & -1  &  0  &  0  & -1  &  0 \\ 
  0  &  0  &  1  &  0  &  0  &  1  & -1  & -1  &  0  &  0  & -1  &  0  &  0  & -1  &  0  &  1 \\ 
  0  &  0  &  0  &  0  &  0  &  0  &  0  &  0  &  0  &  0  &  0  &  0  &  0  &  0  &  0  &  0  
\end{array}
\right]
\end{gather*}

\begin{gather*}
T=5, \\
\left[
\begin{array}{rrrrrrrrrrrrr}
 -4  & -3  & -2  & -2  & -2  & -2  & -1  & -1  &  6  &  4  &  4  & \cdots \\ 
 -4  & -2  & -3  & -2  & -2  & -1  & -2  & -2  &  6  &  5  &  4  & \cdots \\ 
 -3  & -2  & -2  & -2  & -2  & -1  & -1  & -1  &  5  &  4  &  4  & \cdots \\ 
  1  &  1  &  1  &  1  &  0  &  1  &  0  &  0  & -1  &  0  & -1  & \cdots \\ 
  2  &  2  &  1  &  1  &  1  &  1  &  1  &  0  & -1  & -1  & -1  & \cdots \\ 
  1  &  1  &  1  &  1  &  1  &  0  &  1  &  1  & -2  & -2  & -1  & \cdots \\ 
  2  &  1  &  2  &  1  &  1  &  1  &  1  &  1  & -2  & -2  & -1  & \cdots \\ 
  0  &  0  &  0  &  0  &  1  &  0  &  0  &  1  & -1  & -1  & -1  & \cdots \\ 
  0  &  0  &  0  &  0  &  0  &  0  &  0  &  0  &  0  &  0  &  0  & \cdots 
\end{array} \right. \\
\left. \qquad \qquad \begin{array}{rrrrrrrrrrrr}
\cdots &  3  &  3  &  2  &  2  &  2  &  2  &  2  &  2  &  2  &  1  &  1 \\ 
\cdots &  3  &  3  &  3  &  2  &  2  &  2  &  2  &  2  &  1  &  2  &  0 \\ 
\cdots &  2  &  2  &  3  &  2  &  2  &  2  &  2  &  1  &  2  &  2  &  0 \\ 
\cdots & -1  &  0  &  0  & -1  &  0  &  0  &  0  & -1  & -1  &  1  & -1 \\ 
\cdots & -1  & -1  &  0  & -1  & -1  &  0  &  0  & -1  & -1  &  0  & -1 \\ 
\cdots &  0  & -1  & -1  &  0  &  0  & -1  & -1  &  0  &  1  & -1  &  1 \\ 
\cdots & -1  & -1  & -1  &  0  & -1  & -1  &  0  &  0  &  0  & -1  &  0 \\ 
\cdots &  0  &  0  & -1  &  0  &  0  &  0  & -1  &  1  &  0  &  0  &  1 \\ 
\cdots &  0  &  0  &  0  &  0  &  0  &  0  &  0  &  0  &  0  &  0  &  0  
\end{array}
\right]
\end{gather*}

\begin{gather*}
T=6, \\
\left[
\begin{array}{rrrrrrrrrrrrrrrr}
 -5  & -3  & -2  & -2  & -2  & -1  &  5  &  5  &  4  &  3  &  3  &  3  &  3  &  2  &  2  &  1 \\ 
 -5  & -3  & -3  & -2  & -2  & -2  &  6  &  5  &  3  &  4  &  4  &  3  &  2  &  3  &  3  &  0 \\ 
 -4  & -2  & -2  & -1  & -1  & -2  &  5  &  5  &  3  &  3  &  3  &  2  &  2  &  3  &  2  &  0 \\ 
  1  &  1  &  0  &  1  &  0  &  0  &  0  & -1  & -1  &  0  &  0  &  0  & -1  &  0  &  0  & -1 \\ 
  2  &  1  &  1  &  1  &  1  &  0  & -1  & -1  & -1  & -1  &  0  & -1  & -1  &  0  &  0  & -1 \\ 
  1  &  1  &  1  &  0  &  1  &  1  & -2  & -1  &  0  & -1  & -1  &  0  &  0  & -1  & -1  &  1 \\ 
  2  &  1  &  1  &  1  &  1  &  1  & -2  & -1  & -1  & -1  & -1  & -1  &  0  &  0  & -1  &  0 \\ 
  0  &  0  &  1  &  0  &  0  &  1  & -1  & -1  &  0  &  0  & -1  &  0  &  0  & -1  &  0  &  1 \\ 
  0  &  0  &  0  &  0  &  0  &  0  &  0  &  0  &  0  &  0  &  0  &  0  &  0  &  0  &  0  &  0  
\end{array}
\right]
\end{gather*}

\begin{gather*}
T=7, \\
\left[
\begin{array}{rrrrrrrrrrrr}
 -6  & -4  & -3  & -3  & -3  & -2  & -2  & -2  &  9  &  6  &  6  & \cdots \\ 
 -6  & -3  & -4  & -3  & -3  & -2  & -2  & -2  &  9  &  7  &  6  & \cdots \\ 
 -5  & -3  & -3  & -3  & -3  & -2  & -2  & -2  &  8  &  6  &  6  & \cdots \\ 
  1  &  1  &  1  &  1  &  0  &  1  &  0  &  0  & -1  &  0  & -1  & \cdots \\ 
  2  &  2  &  1  &  1  &  1  &  1  &  1  &  0  & -1  & -1  & -1  & \cdots \\ 
  1  &  1  &  1  &  1  &  1  &  0  &  1  &  1  & -2  & -2  & -1  & \cdots \\ 
  2  &  1  &  2  &  1  &  1  &  1  &  1  &  1  & -2  & -2  & -1  & \cdots \\ 
  0  &  0  &  0  &  0  &  1  &  0  &  0  &  1  & -1  & -1  & -1  & \cdots \\ 
  0  &  0  &  0  &  0  &  0  &  0  &  0  &  0  &  0  &  0  &  0  & \cdots
\end{array} \right. \\
\left. \qquad \qquad \begin{array}{rrrrrrrrrrrr}
\cdots &  4  &  4  &  4  &  3  &  3  &  3  &  3  &  3  &  3  &  2  &  1 \\ 
\cdots &  4  &  4  &  4  &  3  &  3  &  3  &  3  &  3  &  2  &  3  &  0 \\ 
\cdots &  4  &  4  &  4  &  3  &  3  &  3  &  3  &  2  &  3  &  3  &  0 \\ 
\cdots & -1  &  0  &  0  & -1  &  0  &  0  &  0  & -1  & -1  &  1  & -1 \\ 
\cdots & -1  & -1  &  0  & -1  & -1  &  0  &  0  & -1  & -1  &  0  & -1 \\ 
\cdots &  0  & -1  & -1  &  0  &  0  & -1  & -1  &  0  &  1  & -1  &  1 \\ 
\cdots & -1  & -1  & -1  &  0  & -1  & -1  &  0  &  0  &  0  & -1  &  0 \\ 
\cdots &  0  &  0  & -1  &  0  &  0  &  0  & -1  &  1  &  0  &  0  &  1 \\ 
\cdots &  0  &  0  &  0  &  0  &  0  &  0  &  0  &  0  &  0  &  0  &  0  
\end{array}
\right]
\end{gather*}

\begin{gather*}
T=8, \\
\left[
\begin{array}{rrrrrrrrrrrrrrrr}
 -7  & -4  & -3  & -3  & -2  & -2  &  7  &  7  &  5  &  5  &  4  &  4  &  4  &  3  &  3  &  1 \\ 
 -7  & -4  & -4  & -2  & -3  & -3  &  8  &  7  &  5  &  5  &  5  &  4  &  3  &  4  &  4  &  0 \\ 
 -6  & -3  & -3  & -2  & -2  & -2  &  7  &  7  &  4  &  4  &  5  &  3  &  3  &  4  &  3  &  0 \\ 
  1  &  1  &  0  &  1  &  0  &  0  &  0  & -1  & -1  &  0  &  0  &  0  & -1  &  0  &  0  & -1 \\ 
  2  &  1  &  1  &  1  &  1  &  0  & -1  & -1  & -1  & -1  &  0  & -1  & -1  &  0  &  0  & -1 \\ 
  1  &  1  &  1  &  0  &  1  &  1  & -2  & -1  &  0  & -1  & -1  &  0  &  0  & -1  & -1  &  1 \\ 
  2  &  1  &  1  &  1  &  1  &  1  & -2  & -1  & -1  & -1  & -1  & -1  &  0  &  0  & -1  &  0 \\ 
  0  &  0  &  1  &  0  &  0  &  1  & -1  & -1  &  0  &  0  & -1  &  0  &  0  & -1  &  0  &  1 \\ 
  0  &  0  &  0  &  0  &  0  &  0  &  0  &  0  &  0  &  0  &  0  &  0  &  0  &  0  &  0  &  0  
\end{array}
\right]
\end{gather*}

\begin{gather*}
T=9, \\
\left[
\begin{array}{rrrrrrrrrrrrr}
 12  & -8  & -5  & -4  & -4  & -4  & -3  & -3  & -2  &  8  &  8  &  6  & \cdots \\
 12  & -8  & -4  & -5  & -4  & -4  & -3  & -3  & -3  &  9  &  8  &  5  & \cdots \\
 11  & -7  & -4  & -4  & -4  & -4  & -2  & -2  & -3  &  8  &  8  &  5  & \cdots \\
 -1  &  1  &  1  &  1  &  1  &  0  &  1  &  0  &  0  &  0  & -1  & -1  & \cdots \\
 -1  &  2  &  2  &  1  &  1  &  1  &  1  &  1  &  0  & -1  & -1  & -1  & \cdots \\
 -2  &  1  &  1  &  1  &  1  &  1  &  0  &  1  &  1  & -2  & -1  &  0  & \cdots \\
 -2  &  2  &  1  &  2  &  1  &  1  &  1  &  1  &  1  & -2  & -1  & -1  & \cdots \\
 -1  &  0  &  0  &  0  &  0  &  1  &  0  &  0  &  1  & -1  & -1  &  0  & \cdots \\
  0  &  0  &  0  &  0  &  0  &  0  &  0  &  0  &  0  &  0  &  0  &  0  & \cdots
\end{array} \right. \\
\left. \qquad \qquad \begin{array}{rrrrrrrrrrrr}
\cdots &   5  &  5  &  4  &  4  &  4  &  4  &  4  &  4  &  3  &  1 \\ 
\cdots &   6  &  6  &  4  &  4  &  4  &  4  &  4  &  3  &  4  &  0 \\ 
\cdots &   5  &  5  &  4  &  4  &  4  &  4  &  3  &  4  &  4  &  0 \\ 
\cdots &   0  &  0  & -1  &  0  &  0  &  0  & -1  & -1  &  1  & -1 \\ 
\cdots &  -1  &  0  & -1  & -1  &  0  &  0  & -1  & -1  &  0  & -1 \\ 
\cdots &  -1  & -1  &  0  &  0  & -1  & -1  &  0  &  1  & -1  &  1 \\ 
\cdots &  -1  & -1  &  0  & -1  & -1  &  0  &  0  &  0  & -1  &  0 \\ 
\cdots &   0  & -1  &  0  &  0  &  0  & -1  &  1  &  0  &  0  &  1 \\ 
\cdots &   0  &  0  &  0  &  0  &  0  &  0  &  0  &  0  &  0  &  0  
\end{array}
\right]
\end{gather*}

\bibliographystyle{plain}
\bibliography{references}

\end{document}